\numberwithin{equation}{section}
\newtheorem{definition}{Definition}[section]
\newtheorem{theorem}{Theorem}[section]
\newtheorem{que}{Question}[section]
\newtheorem{lemma}{Lemma}[section]
\newtheorem{example}{Example}[section]
\newtheorem{proposition}{Proposition}[section]
\theoremstyle{definition}
\newtheorem{remark}{Remark}[section]
\def\area{\operatorname{area}}
\def\sing{\operatorname{Sing}}
\def\meas{\operatorname{meas}}
\def\diam{\operatorname{diam}}
\def\dist{\operatorname{dist}}
\def\c{\operatorname{\mathbb C}}
\def\H{\operatorname{\mathcal{H}}}
\def\j{\operatorname{\mathcal{J}}}
\def\f{\operatorname{\mathcal{F}}}
\def\I{\operatorname{\mathcal{I}}}
\def\b{\operatorname{\mathcal{B}}}
\def\s{\operatorname{\mathcal{S}}}
\def\p{\operatorname{\mathcal{P}}}
\def\sing{\operatorname{Sing}}
\def\m{\operatorname{\mathcal{M}}}
\def\ct{\operatorname{Crit}}
\begin{document}
\title{Speiser meets Misiurewicz}
\author{Magnus Aspenberg and Weiwei Cui}

\date{}
\maketitle

\begin{abstract}
We propose a notion of Misiurewicz condition for transcendental entire functions and study perturbations of Speiser functions satisfying this condition in their parameter spaces (in the sense of Eremenko and Lyubich). We show that every Misiurewicz entire function can be approximated by hyperbolic maps in the same parameter space. Moreover, Misiurewicz functions are Lebesgue density points of hyperbolic maps if their Julia sets have zero Lebesgue measure. We also prove that the set of Misiurewicz Speiser functions has Lebesgue measure zero in the parameter space.

\medskip
\noindent\emph{2020 Mathematics Subject Classification}: 37F10, 30D05.

\medskip
\noindent\emph{Keywords}: Misiurewicz condition, Speiser class, Eremenko-Lyubich class, Perturbations, transcendental dynamics.
\end{abstract}

\section{Introduction and main results}

A transcendental entire function $f$ is \emph{hyperbolic} if the post-singular set is a compact subset of the Fatou set. Such functions are uniformly expanding on their Julia sets with respect to certain conformal metric \cite{rempe8}. Allowing critical points in the Julia set will in general destroy uniform expansion. However, non-hyperbolic maps with certain conditions on the singular orbits  can still be understood to some extent. The main purpose of this paper is to explore a class of non-uniformly expanding maps which are in a sense singularly non-recurrent. This is the next natural step towards a better understanding of transcendental functions in the dynamical point of view.

Usually if we do not assume \emph{uniform hyperbolicity}, singular values are allowed in the Julia set. For non-uniformly hyperbolic maps, restrictions are often put on the singular orbits (orbits of singular values). 
Regarding this, there are several well known conditions, such as the Misiurewicz condition, semi-hyperbolicity, the Collet-Eckmann condition etc., in rational dynamics. Among them, the Misiurewicz condition, which roughly means that the critical points are non-recurrent, will be considered in this paper in the transcendental setting.

A first attempt is to borrow the definition  from rational dynamics directly. Recall that a rational map satisfies the Misiurewicz condition if it is non-hyperbolic, has no parabolic cycles and moreover, the omega limit set of each critical point in the Julia set does not contain any critical point. However, a direct adoption of this definition does not sound reasonable. Transcendental functions satisfying the above condition may still behave quite differently. Intuitively, Misiurewicz maps should, in a sense, exhibit similar properties as hyperbolic maps. To present a reasonable definition for the Misiurewicz condition and to make perturbations meaningful, we probably want to consider those maps in better parameter spaces. Therefore, we propose such a definition in the following. To start with, we introduce several notions and notations.

\smallskip
Let $f$ be an entire function. The set of singularities of the inverse of $f$ consists of critical and asymptotic values in $\c$ of $f$, and is denoted by $\sing(f^{-1})$. We put, for some integer $k$,
$$\p(f):=\overline{\bigcup_{n\geq 0}f^{n}(\sing(f^{-1}))}~\text{~and~}~\p^k(f):=\overline{\bigcup_{n\geq k}f^{n}(\sing(f^{-1}))}.$$
The former is known as the \emph{post-singular set} of $f$. As usual, let $\f(f)$ and $\j(f)$ denote the Fatou and Julia sets of $f$ respectively. 

\begin{definition}\label{mis}
A  transcendental entire function $f$ satisfies the Misiurewicz condition if it is non-hyperbolic and satisfies the following conditions:
\begin{itemize}
\item[$(i)$] $\p(f)\cap\f(f)$ is compact, and 
\item[$(ii)$] there exists $k\in\mathbb{N}$ such that $\p^k(f)\cap\j(f)$ is hyperbolic.
\end{itemize}
\end{definition}

For simplicity, $f$ is called a Misiurewicz function if it satisfies the above Misiurewicz condition. Recall that a compact, forward invariant set $\Lambda$ is \emph{hyperbolic} (or is called a \emph{hyperbolic set}), if there exist $C>0$, $\lambda>1$ such that $
|(f^n)'(z)|\geq C\lambda^n$ for all $z\in\Lambda$ and any $n\geq 1$. The condition $(ii)$ means that every singular value in the Julia set will eventually fall into a hyperbolic set. (If $(ii)$ in the above definition is replaced by $\#(\p(f)\cap\j(f))<\infty$, we say that $f$ satisfies the \emph{Misiurewicz-Thurston} condition or is called a Misiurewicz-Thurston map.) It is not difficult to see that the Fatou sets of Misiurewicz functions can only have attracting cycles (see Proposition \ref{fatoucomponent} in the next section).

\begin{remark}
Stemming from \cite{misiurewicz1}, the Misiurewicz condition has been studied in various settings. For instance, in the (real and complex) quadratic family it means that the critical point is not recurrent, which, in turn, is equivalent to the well known semi-hyperbolicity condition introduced by Carleson, Jones and Yoccoz \cite{carleson1}. To extend the original definition of Misiurewicz to other settings, several (slightly) different definitions were considered which also motivates our definition; see, for instance, \cite{vanstrien1, graczyk1, aspenberg2}.
\end{remark}

With the above definition, our primary intention is to study perturbations of Misiurewicz entire functions. This is related to the famous \emph{hyperbolicity conjecture}, which states that hyperbolic maps are open and dense, or, equivalently, that every (non-hyperbolic) map can be approximated by hyperbolic maps. We will show that Misiurewicz functions have this approximation property, but we have to restrict to a suitable parameter space. It is clear by the definition above that Misiurewicz functions are post-singularly bounded and thus belong to the \emph{Eremenko-Lyubich class} 
$$\b:=\left\{f:\c\to\c\, \text{transcendental and entire,~}\, \sing(f^{-1})~\text{is bounded}\, \right\}.$$
(See \cite{sixsmith6} for a survey on the dynamics of this class.) However, such functions may have an infinite number of singular values and thus the  parameter space defined by Eremenko and Lyubich could be infinite-dimensional. Therefore, we consider the \emph{Speiser class}
$$\s:=\left\{f:\c\to\c\, \text{transcendental and entire,~}\, \sing(f^{-1})~\text{is finite}\, \right\}.$$
Functions in this class are often called \emph{Speiser functions} and have attracted a great deal of attention in recent years; see \cite{bishop1, bishop5} for instance. Naturally, it is expected that the hyperbolicity conjecture holds for Speiser functions.

To make hyperbolic approximation meaningful, we use parameter spaces constructed by Eremenko and Lyubich for Speiser functions; \cite[Section 3]{eremenko2}. More precisely, let $f,\,g\in\s$ be entire. We say that $f$ and $g$ are quasiconformally equivalent if there exist two quasiconformal maps $\varphi, \psi:\c\to\c$  such that $\varphi\circ f=g\circ\psi$. The parameter space $\m_f$ in which $f\in\s$ lies  is defined as the set of all functions $g$ which are quasiconformally equivalent to $f$. The parameter space $\m_f$ turns out to be a complex manifold of dimension $\#\sing(f^{-1})+2$.

\smallskip
Now our first result can be stated as follows.

\begin{theorem}\label{mishyp}

Let $f\in\s$ be Misiurewicz. Then $f$ can be approximated by hyperbolic maps in $\m_f$.
\end{theorem}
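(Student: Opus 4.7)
The plan is to construct a small deformation of $f$ within $\m_f$ that relocates every singular orbit landing in $\j(f)$ into the Fatou set, while preserving the attracting structure for the remaining singular orbits. I partition $\sing(f^{-1})=S_F\sqcup S_J$ according to whether the forward orbit of the singular value eventually lies in $\f(f)$ or in $\j(f)$. By condition $(i)$ and Proposition \ref{fatoucomponent}, elements of $S_F$ are attracted to attracting cycles of $f$; by condition $(ii)$, for each $v\in S_J$ there is $m(v)\geq k$ such that $p:=f^{m(v)}(v)$ lies in the hyperbolic set $\Lambda:=\p^k(f)\cap\j(f)$.

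The first step is to exploit structural stability on $\Lambda$. Since $\Lambda$ is compact, forward invariant, and hyperbolic, for every $g$ in a small $\m_f$-neighborhood $U$ of $f$ there is a continuation $\Lambda_g$ and a holomorphic motion $h_g\colon\Lambda\to\Lambda_g$ conjugating $f|_\Lambda$ to $g|_{\Lambda_g}$; the standard persistence theorem carries over from rational to transcendental dynamics because $\Lambda$ is bounded and keeps a definite distance from $\sing(g^{-1})$ for $g$ near $f$. On the Fatou side, each attracting cycle of $f$ continues holomorphically to an attracting cycle of $g$ with basin varying lower semicontinuously, so that the singular values in $S_F$ (which themselves move holomorphically with $g$) remain in the continuations of their basins throughout $U$. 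Thus, on $U$, the only obstruction to hyperbolicity is that some $v\in S_J$ still has forward orbit in $\j(g)$.

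The core of the proof is a transversality step breaking this obstruction. The Eremenko--Lyubich parametrization gives $\dim\m_f=\#\sing(f^{-1})+2$, with the singular values of $g$ serving as local coordinates modulo the two-dimensional affine ambiguity. For each $v\in S_J$ I consider the holomorphic map
\begin{equation}
\Phi_v\colon U\to\c,\qquad \Phi_v(g)=g^{m(v)}(v_g)-h_g(p),
\end{equation}
where $v_g\in\sing(g^{-1})$ is the continuation of $v$. Constructing a quasiconformal deformation supported in a small disk around $v$ and disjoint from $\Lambda\cup\bigcup_{0\leq j<m(v)}f^j(v)$ produces a direction in $\m_f$ along which $v_g$ moves but $h_g$ and the orbit of $p$ are essentially frozen, forcing $\Phi_v\not\equiv 0$. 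Since $\Lambda_g$ has empty interior, a further perturbation places $g^{m(v)}(v_g)$ outside $\Lambda_g$; expansion along $\Lambda_g$ then pushes the tail of the orbit out of any prescribed neighborhood of $\Lambda_g$. To capture this ejected orbit inside the Fatou set, one couples the previous perturbation with a second one turning a repelling periodic point $q\in\Lambda$ into an attracting cycle (by moving an auxiliary singular parameter onto $q$), whose basin absorbs the orbit of $v_g$. Iterating this construction over the finitely many elements of $S_J$, with disjoint deformation supports so that previous steps are not spoiled, yields a hyperbolic $g\in\m_f$ arbitrarily close to $f$.

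The main obstacle will be the transversality $\Phi_v\not\equiv 0$, which must rule out the degenerate scenario in which varying $v$ inside $\m_f$ drags $g^{m(v)}(v_g)$ exactly along $\Lambda_g$ via $h_g$; this is where the richness of the Speiser parameter space, and the possibility of deforming singular values independently, becomes decisive. A secondary subtlety is the capture step: one must verify that the orbit ejected from $\Lambda_g$ can actually be paired with a Fatou component catching it in the transcendental setting, where orbits may also escape to infinity. This is handled by choosing the perturbation so that only finitely many iterates of $v_g$ occur before it enters a compact attracting basin, and by exploiting the separation, guaranteed by condition $(i)$, between $\p(f)\cap\f(f)$ and the rest of the post-singular set.
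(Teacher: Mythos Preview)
Your proposal has a genuine gap in the capture step. You write that one ``couples the previous perturbation with a second one turning a repelling periodic point $q\in\Lambda$ into an attracting cycle (by moving an auxiliary singular parameter onto $q$).'' This does not work: the multiplier of a periodic cycle is determined by the derivative of the map along the cycle, and placing a singular \emph{value} on the cycle has no effect on that multiplier. In $\m_f$ the coordinates are the singular values, and there is no mechanism by which sliding one of them onto a repelling periodic point makes the point attracting. The paper never attempts this; instead, when the Fatou set is nonempty it uses the large-scale lemma together with the openness of $\f(f)$ to find parameters for which the singular orbit lands directly in an existing attracting basin.

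The deeper problem is the case $\j(f)=\c$, which your outline does not address. Here there is no Fatou set to capture anything, so one must \emph{create} an attracting cycle. The paper splits this into two subcases. If $f$ has a critical value, one uses the blowup property of the Julia set to arrange that the orbit of a critical value returns to the corresponding critical \emph{point}, producing a super-attracting cycle. If $f$ has only asymptotic values (so no critical points exist at all, as for $\lambda e^z$ or general Nevanlinna functions), an entirely different argument is needed: the paper invokes Wiman--Valiron theory in logarithmic tracts to build a long excursion toward infinity followed by a very close return to an asymptotic value, and then shows by explicit estimates that this forces a contraction, hence an attracting basin. Your scheme of ``disjoint deformation supports'' does not substitute for this, and in any case compactly supported Beltrami deformations do not parametrize directions in the finite-dimensional manifold $\m_f$ in the way you suggest; the paper obtains transversality instead from the absence of invariant line fields for Misiurewicz maps.
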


Stronger result can be obtained provided that the Julia set has zero Lebesgue measure.

\begin{theorem}\label{denmis}

Let $f\in\s$ be Misiurewicz. Assume that $\j(f)$ has zero Lebesgue measure. Then $f$  is a Lebesgue density point of hyperbolic maps in $\m_f$.
\end{theorem}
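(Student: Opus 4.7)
The plan is to upgrade the approximation in Theorem \ref{mishyp} to a density-point statement by bounding $|E \cap B(0, r)| / |B(0, r)|$, where $E := \{\lambda \in \m_f : f_\lambda \text{ is not hyperbolic}\}$, as $r \to 0$. Fix a holomorphic parameterization $\lambda \mapsto f_\lambda$ of $\m_f$ near $f_0 = f$; then the singular values $v_i(\lambda)$, $i = 1, \ldots, N = \#\sing(f^{-1})$, depend holomorphically on $\lambda$. By the Misiurewicz condition, $\Lambda := \p^k(f) \cap \j(f)$ is a hyperbolic set, and by the standard $\lambda$-lemma machinery already invoked for Theorem \ref{mishyp}, it admits a holomorphic motion $h_\lambda : \Lambda \to \Lambda_\lambda \subset \j(f_\lambda)$ on a neighborhood of $0$. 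Any $v_i \in \f(f)$ persists in an attracting basin under perturbation, so non-hyperbolicity of $f_\lambda$ can only come from singular values $v_i$ originally in $\j(f)$. For each such bad index $i$, pick the minimal $n_i$ with $f^{n_i}(v_i) \in \Lambda$ and set $\Phi_i(\lambda) := f_\lambda^{n_i}(v_i(\lambda))$, so $\Phi_i(0) \in \Lambda$.

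Since the Julia set is completely invariant, $\lambda \in E$ is equivalent to $\Phi_i(\lambda) \in \j(f_\lambda)$ for some bad $i$. The central step is transversality: in $\m_f$ there are holomorphic directions moving each singular value independently, and a telescoping argument along the orbit — combining uniform expansion on $\Lambda$ with Koebe distortion on the first $n_i$ iterates — should show that $D\Phi_i|_{\lambda = 0}$ is surjective onto $\c$, i.e.\ that $\Phi_i$ is a holomorphic submersion near $0$. Verifying this transversality is the main obstacle of the proof; it is the analogue of the classical Tsujii–Levin transversality for rational Misiurewicz maps and relies crucially on the independent movability of singular values built into the Speiser parameter space $\m_f$. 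Granted the submersion property, Fubini combined with bounded-distortion estimates yields
$$|E \cap B(0, r)| \;\lesssim\; \sum_{i \text{ bad}} r^{2(d-1)} \cdot \bigl| \j(f_{\lambda_0}) \cap B(\Phi_i(0), Cr) \bigr|$$
uniformly in $\lambda_0 \in B(0, r)$, where $d = N + 2$ is the complex dimension of $\m_f$ and $C$ depends on $\|D\Phi_i\|$ and the dilatation of the motion.

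The last step is to control the Julia-set measure on the right-hand side. Holomorphic motions are quasiconformal and hence absolutely continuous, so the hypothesis $|\j(f)| = 0$ transfers to $|\j(f_{\lambda_0})| = 0$ on the region where the motion extends, namely on a neighborhood of each $\Phi_i(0)$ obtained by pulling back $h_\lambda$ along the expanding dynamics of $\Lambda_\lambda$. The Lebesgue density theorem then gives $|\j(f) \cap B(z, r)| = o(r^2)$ for almost every $z$; to upgrade this to the specific point $\Phi_i(0) \in \Lambda$, a density-transfer argument exploiting the self-similar Cantor-set structure of $\Lambda$ together with bounded distortion of its backward dynamics propagates density zero from generic points to every point of $\Lambda$. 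Summing over the finitely many bad indices then produces $|E \cap B(0, r)| / |B(0, r)| \to 0$, which is the conclusion of Theorem \ref{denmis}.
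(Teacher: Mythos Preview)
Your plan has a structural gap that the paper's argument is specifically designed to avoid. The map $\Phi_i(\lambda)=f_\lambda^{n_i}(v_i(\lambda))$ with \emph{fixed} $n_i$ lands at $\Phi_i(0)\in\Lambda\subset\j(f_0)$, and the point $h_\lambda(\Phi_i(0))\in\Lambda_\lambda\subset\j(f_\lambda)$ follows it under the motion. Hence the quantity that actually measures the separation of the singular orbit from the (moving) Julia set is not $\Phi_i(\lambda)$ but $x_j(\lambda)=\Phi_i(\lambda)-h_\lambda(\Phi_i(0))$; this is precisely the paper's function \eqref{xj}. Even if $D\Phi_i|_{0}$ is surjective, that surjectivity can come entirely from the motion term $\partial_\lambda h_\lambda(\Phi_i(0))$, in which case $\Phi_i$ slides \emph{along} $\j(f_\lambda)$ and your Fubini estimate $|E\cap B(0,r)|\lesssim r^{2(d-1)}\,|\j(f_{\lambda_0})\cap B(\Phi_i(0),Cr)|$ is vacuous. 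What you really need is that $x_j$ is a submersion at $0$, i.e.\ $k=1$ in \eqref{cex}; the paper states explicitly after \eqref{cex} that it cannot prove this. So your ``main obstacle'' is not merely the analogue of Tsujii--Levin transversality --- it is a statement the authors do not know how to establish, and the whole architecture of Section~3 (Whitney disks $D(a_0,r_0)$ with $a_0\neq 0$, on which $x_j$ has bounded distortion regardless of $k$, followed by iteration until $\xi_{n_j,j}$ reaches the fixed large scale $S$) exists precisely to bypass it.

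The second half of your sketch also leaks. There is no holomorphic motion of the full Julia set in a neighbourhood of a Misiurewicz parameter (that would be structural stability, contradicting Lemma~\ref{indi}), so the hypothesis $\meas\j(f_0)=0$ does not transfer to $\meas\j(f_\lambda)=0$ by quasiconformal absolute continuity; the motion $h_\lambda$ is defined only on the compact hyperbolic set $\Lambda$, not on any open neighbourhood of $\Phi_i(0)$. And even staying with $\j(f_0)$, zero measure gives density zero only almost everywhere, while you need it at the specific points $\Phi_i(0)\in\Lambda$ --- your ``self-similar density transfer'' is not justified and is not obviously true for an arbitrary hyperbolic subset of a transcendental Julia set. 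The paper sidesteps both issues at once: after blowing up to scale $S$ via Lemma~\ref{scalelarge}, it compares not with $\j(f_\lambda)$ at small scale but with the fixed compact set $V_\delta=\overline D(0,R)\setminus U_\delta\subset\f(f_0)$, which by stability of attracting cycles lies in $\f(f_a)$ for every nearby $a$. The hypothesis $\meas\j(f_0)=0$ then enters only through the elementary statement that $\meas(V_\delta\cap D)/\meas D\to 1$ for any disk $D$ of radius $\ge S/4$, and bounded distortion (Lemma~\ref{est}) pulls this back to parameter space.
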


Although Julia sets of rational Misiurewicz maps always have zero Lebesgue measure if the Fatou set is not empty, Julia sets of transcendental Misiurewicz maps could have positive Lebesgue measure in this case. Such an example will be given later on. We will show that hyperbolic maps have positive density around Misiurewicz maps if their Fatou sets are non-empty.

\medskip
In accordance with developments in the interval and rational dynamics (\cite{Sands1998,aspenberg2}), and as an application of our method of proof, we also show that Misiurewicz dynamics are rare in the following sense.

\begin{theorem}\label{main}
Let $f$ be a Speiser function. Then the set of Misiurewicz parameters in $\m_f$ has Lebesgue measure zero.
\end{theorem}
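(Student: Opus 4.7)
The plan is a Fubini-type argument in the finite-dimensional manifold $\m_f$: cover the Misiurewicz locus by countably many coordinate neighborhoods on which a hyperbolic set of the base map persists via a holomorphic motion, and show that the Misiurewicz condition cuts out sets of planar Lebesgue measure zero there.

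Fix a Misiurewicz $f_0 \in \m_f$ with hyperbolic set $\Lambda := \p^k(f_0) \cap \j(f_0)$, and partition the singular values of $f_0$ into indices $F$ whose orbit stays in $\f(f_0)$ (thus lies eventually in a basin of an attracting cycle) and indices $J$ whose orbit falls into $\Lambda$ at first landing time $m_i \geq k$. Persistence of hyperbolic sets combined with the $\lambda$-lemma of Ma\~n\'e--Sad--Sullivan and Slodkowski's extension yields a neighborhood $U \subset \m_f$ of $f_0$ and a holomorphic motion $h : U \times \Lambda \to \c$ such that $\Lambda_g := h_g(\Lambda)$ is a hyperbolic set for each $g \in U$, conjugate to $(f_0,\Lambda)$ via $h_g$. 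Uniform expansion on $\Lambda$ forces its Hausdorff dimension strictly below $2$, hence $\Lambda$ has planar Lebesgue measure zero. For $i \in F$ the attracting structure persists as an open condition on $g \in U$, so Misiurewicz-ness of $g \in U$ reduces to the closed requirements $\Phi_i(g) := g^{m_i}(v_i(g)) \in \Lambda_g$ for every $i \in J$, where each $\Phi_i : U \to \c$ is holomorphic.

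Parametrize $U$ via Eremenko--Lyubich coordinates and pick, for a fixed $i \in J$, a coordinate $\lambda_i$ along which only $v_i$ varies. On the corresponding complex one-dimensional slice, set $\psi(\lambda_i) := h_\lambda^{-1}(\Phi_i(\lambda))$; then $\Phi_i(\lambda) \in \Lambda_\lambda$ becomes $\psi(\lambda_i) \in \Lambda$. A short Wirtinger calculation on the defining identity $h_\lambda(\psi(\lambda_i)) = \Phi_i(\lambda)$, using holomorphicity of $h$ in $\lambda$ and of $\Phi_i$, shows that $\psi$ satisfies a Beltrami equation $\partial_{\bar\lambda_i}\psi = -\mu(\lambda,\psi)\, \overline{\partial_{\lambda_i}\psi}$, where $\mu$ is the Beltrami coefficient of $h_\lambda$ in the spatial variable. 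Thus $\psi$ is quasiregular with the same dilatation bound as $h$; quasiregular maps pull back planar null sets to planar null sets, so $\psi^{-1}(\Lambda)$ has planar measure zero along the slice, provided $\psi$ is not a constant taking a value in $\Lambda$. Fubini over the remaining coordinates and a countable union over the combinatorial patterns $(m_i)_{i \in J}$ gives a null set in $U$, and second-countability of $\m_f$ completes the proof.

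The main obstacle is to exclude the rigid case $\psi \equiv w_0 \in \Lambda$, i.e., $\Phi_i \equiv h_{\cdot}(w_0)$ on an open subset of $\m_f$, which would assert that the $m_i$-th iterate of $v_i$ rigidly tracks the holomorphically moving point $w_0$ across parameters. Such a persistent critical-orbit identity must be ruled out by exploiting that in the Eremenko--Lyubich parametrization each singular value is a genuine coordinate of $\m_f$: an infinitesimal deformation of $v_i$ alone should move the iterate $g^{m_i}(v_i(g))$ in a direction transverse to the motion of $\Lambda$, via a Ma\~n\'e--Sad--Sullivan-type rigidity argument adapted to the Speiser class. A secondary technical point is the measure-zero conclusion for $\Lambda$ itself, which follows from Koebe distortion along a Markov partition of $\Lambda$ combined with its uniform expansion rate.
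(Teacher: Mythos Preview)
Your proposal contains a genuine gap in its central reduction. You claim that, in a neighborhood $U$ of a Misiurewicz map $f_0$, ``Misiurewicz-ness of $g \in U$ reduces to the closed requirements $\Phi_i(g) := g^{m_i}(v_i(g)) \in \Lambda_g$ for every $i \in J$''. This is false. The Misiurewicz condition only asks that the singular orbit of $g$ eventually lands in \emph{some} hyperbolic set for $g$, not in the holomorphic continuation $\Lambda_g$ of the hyperbolic set of $f_0$. There are Misiurewicz parameters $g$ arbitrarily close to $f_0$ whose singular orbits land on repelling cycles (or more general hyperbolic sets) that are not contained in $\Lambda_g$. Taking a countable union over landing times $(m_i)$ does not address this, nor does the second-countability cover: for each $g_n$ in your countable subcover, the set you prove to be null inside $U_{g_n}$ is only the set of parameters whose orbits track $\Lambda_{g_n}$, and a Misiurewicz $h \in U_{g_n}$ need not be of that type.

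The paper circumvents this by introducing a quantitative filtration: a map is called $R$-Misiurewicz if its (truncated) post-singular set stays inside $D(0,R)$ and at distance at least $1/R$ from all critical points, and $\mathit{Mis}_f = \bigcup_{R>0} \mathit{Mis}_f^R$. This condition makes no reference to any particular hyperbolic set. The paper then shows that at every $R$-Misiurewicz $f_0$ the set $\mathit{Mis}_f^R$ has Lebesgue density strictly less than one, by an expansion-plus-distortion argument: a small $k$-Whitney parameter disk grows under $a \mapsto f_a^{n}(v_j(a))$ to a definite scale (Lemmas~\ref{sdort}--\ref{scalelarge}), and then the blow-up property of Julia sets (Lemma~\ref{blowup}) forces a definite proportion of parameters to have singular orbits entering a fixed annular region around the critical points or near $\infty$, violating $R$-Misiurewicz. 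The Lebesgue density theorem then gives $\meas(\mathit{Mis}_f^R)=0$. You also leave the transversality (``rigid'') case as an acknowledged obstacle; the paper handles it via the absence of invariant line fields (Lemma~\ref{indi}, from \cite{rempe5}), which guarantees that at least one $x_j$ is not identically zero and hence that the parameter derivative of $\xi_{n,j}$ is comparable to the phase derivative (Lemma~\ref{pp}).
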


This result significantly generalizes a result of Bade\'nska in the exponential family \cite{badenska1} to the whole Speiser class. Our proof of the theorem also works for one parameter family of Speiser functions. More precisely, let $f\in\s$ and put
$$\f:=\{\lambda f:\, \lambda\in\c\setminus\{0\}\}.$$
Then our method shows that the set of Misiurewicz parameters in $\f$ has zero Lebesgue measure in the parameter plane $\c\setminus\{0\}$. In particular, this shows that the set of Misiurewicz parameters in the sine family $\{\lambda\sin(z): \lambda\in\c\setminus\{0\}\}$ has zero Lebesgue measure. However, if the singular values $\pm\lambda$ tend to $\infty$ under iteration (we call them escaping sine maps), then the set of such maps  has positive Lebesgue measure \cite{qiu2}. Note that escaping sine maps have no recurrent critical points. This also means that our suggested definition for the Misiurewicz condition is reasonable in this perspective.

\begin{remark}
Let $f$ be a Speiser function. The \emph{bifurcation locus}, denoted by $\b_f$, is a subset of $\m_f$, consisting of parameters which has at least one singular value that bifurcates. In other words, $g\in \b_f$ if there exists at least one singular value $s(g)$ such that $\{g^{n}(s(g))\}$ does not form a normal family in a neighborhood of $g$ in $\m_f$. Misiurewicz maps belong to the bifurcation locus. Although the above theorem tells that they have zero measure, but they are still dense in the bifurcation locus. Actually even post-singularly finite entire functions are already dense in the bifurcation locus by a classical normality argument similar as that of rational maps.
\end{remark}

\begin{remark}
In this paper we are only considering Misiurewicz functions in the setting of entire functions. The results of the paper extend also to meromorphic functions in the Speiser class.
\end{remark}

\bigskip
\noindent{\emph{Structure of the article.}} In Section $2$ we collect some known results of Speiser functions, in particular the definition of parameter spaces for such functions given by Eremenko and Lyubich. We also discuss a characterization of the Misiurewicz condition. Section $3$ is devoted to the proof of Theorem \ref{main}. The machinery developed in this section will be crucial for Theorems \ref{mishyp} and \ref{denmis}. Another key ingredient used in the proof of Theorem \ref{mishyp} is a version of the well known Wiman-Valiron theory developed by Bergweiler, Rippon and Stallard that can be applied to tracts without the need to have a globally defined entire function. This is presented briefly in Section $4$. In Section $5$ we prove Theorem \ref{denmis}. In the last section, we prove some result concerning Lebesgue measure of Julia sets for Misiurewicz Speiser functions and also give some explicit examples regarding this.

\section{Some preliminaries}

In this section we present some known results about Speiser functions. In particular, the construction of the complex structure on the parameter space $\m_f$ of $f\in\s$ is reviewed and restated briefly, following \cite{eremenko2}. A characterization of the Misiurewicz condition is also presented in terms of non-recurrence of singular values.

For the general theory of transcendental dynamics, we refer to \cite{bergweiler1}. In \cite{bergweiler6} some aspects of hyperbolic entire functions are considered. \cite{rempe16} gives a discussion on the notion of hyperbolicity in transcendental dynamics.

\subsection{Characterizing the Misiurewicz condition}
We discuss some properties of Misiurewicz entire functions and relate it to non-recurrence of singular values to critical points.

In the Definition \ref{mis}, the first condition implies that Misiurewicz functions cannot have parabolic cycles. As the definition also implies that Misiurewicz functions belong to the Eremenko-Lyubich class $\b$ (they are actually post-singularly bounded), we see immediately from a result of Eremenko and Lyubich that Misiurewicz functions do not have Baker domains \cite[Theorem 1]{eremenko2}. Since by $(ii)$, the (truncated) post-singular set (intersected with the Julia set) is a hyperbolic set, using a relation of the post-singular set with limit functions of wandering domains (e.g., \cite{bergweiler19}) we can also obtain that such functions do not have wandering domains. The expansion along the post-singular set also implies that Siegel disks do not exist for such functions. Therefore, we reach the following observation.

\begin{proposition}\label{fatoucomponent}
Let $f$ be an entire function satisfying Definition \ref{mis}. Then either $\j(f)=\c$ or $\f(f)$ consists of only attracting basins.
\end{proposition}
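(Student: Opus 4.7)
The plan is to invoke the standard classification of periodic Fatou components of an entire function --- attracting, parabolic, Siegel disk, Baker domain --- together with a separate exclusion of wandering domains, and use conditions $(i)$ and $(ii)$ of Definition \ref{mis} to eliminate every type except attracting basins. Once this is done the dichotomy ``$\j(f)=\c$, or else $\f(f)$ consists of attracting basins only'' is immediate.

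First I would rule out parabolic basins. If $p$ were a parabolic periodic point, then by Fatou's classical theorem some singular value $v$ would have its forward orbit contained in the parabolic basin and converging to $p$. This orbit lies in $\p(f)\cap\f(f)$, whereas its accumulation point $p$ lies in $\j(f)$, so $\p(f)\cap\f(f)$ would fail to be closed, contradicting $(i)$. Baker domains are excluded because $(i)$ and $(ii)$ together force $f$ to be post-singularly bounded (the part of $\p(f)$ in $\f(f)$ is compact by $(i)$, the part in $\j(f)$ is the union of the compact hyperbolic set $\p^k(f)\cap\j(f)$ and a bounded set of first iterates of singular values), so $f\in\b$, and then \cite[Theorem 1]{eremenko2} forbids Baker domains for maps in $\b$.

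For Siegel disks $U$, Ma\~n\'e's theorem gives $\partial U\subset\p(f)\cap\j(f)$, and since the $f$-cycle of $\partial U$ is forward invariant, all but finitely many points of $\partial U$ lie in $\p^k(f)\cap\j(f)=:\Lambda$, a hyperbolic set. But the dynamics on $\partial U$ carries an invariant probability measure with vanishing Lyapunov exponent (coming from the irrational rotation model on $\overline{U}$), which is incompatible with the uniform expansion $|(f^n)'(z)|\geq C\lambda^n$ on $\Lambda$. For wandering domains I would invoke Bergweiler's theorem (see \cite{bergweiler19}) that any finite constant limit function $\alpha$ of a subsequence $f^{n_j}|_U$ on a wandering domain $U$ is an accumulation point of the post-singular orbit, and in particular $\alpha\in\p^k(f)\cap\j(f)=\Lambda$. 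Combined with the fact that a whole Fatou neighborhood in $U$ is sent arbitrarily close to $\alpha$ while points of $\Lambda$ expand away from $\alpha$, this produces a contradiction.

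The hardest part will be making the Siegel-disk and wandering-domain steps genuinely tight: both rest on the general principle that a hyperbolic set cannot support a non-expanding invariant subsystem and cannot attract iterates of a whole open set. For Siegel disks the cleanest route is via invariant measures and Lyapunov exponents, while for wandering domains one has to combine the Bergweiler limit-function inclusion with a local shadowing/expansion argument at the limit point $\alpha\in\Lambda$, taking care that the exceptional finite subset of $\partial U$ or $\{\alpha\}$ does not spoil the hyperbolicity of $\Lambda$.
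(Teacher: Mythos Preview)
Your approach coincides with the paper's: both exclude parabolic basins via condition~$(i)$, Baker domains via membership in~$\b$ together with \cite[Theorem~1]{eremenko2}, wandering domains via the limit-function result of \cite{bergweiler19}, and Siegel disks via the expansion on the hyperbolic set $\Lambda=\p^k(f)\cap\j(f)$.

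One genuine wrinkle in your Siegel-disk step deserves attention. You claim that ``the dynamics on $\partial U$ carries an invariant probability measure with vanishing Lyapunov exponent, coming from the irrational rotation model on $\overline{U}$''. This is not available in general: the linearizing conjugacy need not extend continuously to $\overline{U}$, and any $f^p$-invariant measure you build on $\partial U$ by Krylov--Bogolyubov will in fact have \emph{positive} Lyapunov exponent once you know $\partial U\subset\Lambda$, so no contradiction arises that way. The fix is to work just inside the disk rather than on its boundary: pick $z\in U$ close to $\partial U\subset\Lambda$; its $f^p$-orbit lies on an invariant analytic circle in $U$, hence remains in a fixed neighbourhood of $\Lambda$, while $|(f^{np})'(z)|$ stays bounded in $n$ because $f^p|_U$ is holomorphically conjugate to a rigid rotation. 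This contradicts the uniform expansion $|(f^n)'|\geq C\lambda^n$ that persists in a neighbourhood of $\Lambda$. With this adjustment your argument goes through and matches the paper's sketch.
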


\begin{remark}
In this proposition, we did not restrict to Speiser functions. If only Speiser functions were considered, one can use a result of Eremenko and Lyubich (\cite[Theorem 3]{eremenko2}) and respectively Goldberg and Keen (\cite[Theorem 4.2]{goldberg4}) to exclude wandering domains directly.
\end{remark}

It is usually not easy to check whether a set is a hyperbolic set and thus not easy to check whether an entire function is Misiurewicz. In the rational setting, a criterion was given by a classical result of Ma\~n\'e in terms of non-recurrence of critical points \cite{mane4}. An analogous result for transcendental functions was proved by Rempe and van Strien \cite[Theorem 1.2]{rempe5}. In view of their result, it is plausible that the following is true. Let $\ct(f)$ be the set of critical points of $f$ and let $J_{crit}(f)=\ct(f)\cap\j(f)$, which is the set of critical points in the Julia set. 

\begin{que}\label{que}
Let $f$ be a non-hyperbolic entire function which is post-singularly bounded and has no parabolic cycles. Assume, furthermore, that $\p^k(f)\cap J_{crit}(f)=\emptyset$ for some $k\in\mathbb{N}$. Then $f$ satisfies the Misiurewicz condition.
\end{que}

\medskip

We will study perturbations of Misiurewicz entire functions within the \emph{Speiser class} $\s$. When restricted to this class, Question \ref{que} has a positive answer and thus gives a characterization of the Misiurewicz condition  in Definition \ref{mis}. For simplicity, we use $J_{sing}(f^{-1}):=\sing(f^{-1})\cap\j(f)$ for the set of singular values in the Julia set. 

\begin{definition}\label{mis'}
A Speiser entire function $f$ satisfies the Misiurewicz condition if it is non-hyperbolic, post-singularly bounded without parabolic cycles such that $\p^k(f)\cap J_{crit}(f)=\emptyset$ for some $k\in\mathbb{N}$.
\end{definition}

To see that these two definitions are equivalent, we first note that Definition \ref{mis} implies Definition \ref{mis'}. That the opposite is also true relies on the aforementioned result of Rempe and van Strien \cite[Theorem 1.2]{rempe5} which gives a criterion for a forward invariant set to be hyperbolic.

Put
\begin{equation}\label{trunca}
\p_H(f):=\p^k(f)=\bigcup_{s\in J_{sing}(f^{-1})}\p^{k}(f,s).
\end{equation}
Note that this is a finite union as the function belongs to the class $\s$. Then we have the following result.
\begin{lemma}\label{hs}
Let $f\in\s$ be Misiurewicz. Then each $\p^{k}(f,s)$ is a hyperbolic set. So is $\p_H(f)$.
\end{lemma}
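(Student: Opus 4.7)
The plan is to check that each $\Lambda_s:=\p^{k(s)}(f,s)$, for $s\in\jing(f^{-1})$, satisfies the hypotheses of the transcendental Ma\~n\'e-type theorem of Rempe and van Strien \cite[Theorem 1.2]{rempe5}, and is therefore a hyperbolic set. Since Speiser functions have only finitely many singular values, $\p_H(f)$ will then be a finite union of hyperbolic sets, from which the hyperbolicity of $\p_H(f)$ follows by taking uniform constants.

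First I would verify the properties required to invoke \cite[Theorem 1.2]{rempe5} for $\Lambda_s$: (a) $\Lambda_s$ is compact, (b) $\Lambda_s\subset\j(f)$, (c) $\Lambda_s$ is forward invariant under $f$, and (d) $\Lambda_s$ avoids $\ct(f)$ and the non-repelling periodic set. Property (a) follows from post-singular boundedness ($\p(f)$ is bounded by Definition~\ref{mis'}) together with the closure in the definition of $\p^{k(s)}(f,s)$. For (b) the forward orbit of $s\in\j(f)$ stays in the closed set $\j(f)$, hence so does its closure. Property (c) is immediate from continuity of $f$, since $f(\overline{A})\subset\overline{f(A)}$ implies $f(\Lambda_s)\subset\Lambda_s$. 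For (d), the choice of $k(s)$ gives $\Lambda_s\cap\jrit(f)=\emptyset$, and since $\Lambda_s\subset\j(f)$ this means $\Lambda_s$ avoids every critical point of $f$. Definition~\ref{mis'} rules out parabolic cycles, and Proposition~\ref{fatoucomponent} together with the remarks preceding it rule out Siegel disks and wandering domains, so $\Lambda_s$ cannot meet boundaries of rotation domains either. Applying \cite[Theorem 1.2]{rempe5} then produces constants $C_s>0$ and $\lambda_s>1$ with $|(f^n)'(z)|\geq C_s\lambda_s^n$ for all $z\in\Lambda_s$ and $n\geq 1$, so $\Lambda_s$ is hyperbolic.

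For the statement about $\p_H(f)$, I would use that $f\in\s$ implies $\jing(f^{-1})$ is finite, so the union defining $\p_H(f)$ in~\eqref{trunca} is a finite union of compact forward invariant hyperbolic sets. This union is compact and forward invariant, and setting $C:=\min_s C_s>0$ and $\lambda:=\min_s\lambda_s>1$, any $z\in\p_H(f)$ belongs to some $\Lambda_s$, giving $|(f^n)'(z)|\geq C\lambda^n$ for all $n\geq 1$; hence $\p_H(f)$ is hyperbolic. The main obstacle is the correct invocation of the Rempe--van Strien theorem in the Speiser setting; once one confirms that the non-recurrence and compactness conditions in Definition~\ref{mis'} translate cleanly into its hypotheses, the rest is bookkeeping from the definitions already in place.
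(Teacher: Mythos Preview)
Your proposal is correct and takes essentially the same approach as the paper: both arguments verify that the relevant set is compact, forward invariant, free of critical points and parabolic cycles, and then invoke \cite[Theorem~1.2]{rempe5} (using absence of wandering domains for $f\in\s$) to conclude hyperbolicity. The only cosmetic difference is that the paper applies the Rempe--van Strien theorem directly to $\p_H(f)$ in one stroke, whereas you apply it to each $\Lambda_s$ and then pass to the finite union via uniform constants; both are equivalent and equally short.
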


\begin{proof}
Let $f$ be such a function. Then by definition $\p_H(f)$ is compact,  forward invariant, and contains neither critical points nor parabolic periodic points. Moreover, $f\in\s$ implies that it does not have wandering domains by \cite{eremenko2} (see also \cite{goldberg4}). By \cite[Theorem 1.2]{rempe5} we can see that $\p_H(f)$ is a hyperbolic set.
\end{proof}

\subsection{Parameter space of Speiser functions}
Assume that $f\in\s$. Eremenko and Lyubich proved that the parameter space $\m_f$ can be endowed with a complex structure such that $\m_f$ is a complex manifold of (complex) dimension $q+2$, where $q$ is the number of singular values of $f$. The construction is as follows (we recast it briefly only for our purpose and refer to \cite[Section 3]{eremenko2} for more details): Let $\beta_1,\,\beta_2\in\c$ be distinct such that $f(\beta_1),\,f(\beta_2)\not\in\sing(f^{-1})$. Denote singular values of $f$ by $s_1(f), \dots, s_q(f)$ and put $s_{q+1}(f)=f(\beta_1)$ and $s_{q+2}(f)=f(\beta_2)$. Recall that $f,g\in\s$ are quasiconformally equivalent to $f$, if there exist two quasiconformal maps $\varphi,\psi: \c\to\c$ such that $\varphi\circ f=g\circ\psi$. Define $\m_f(\beta_1,\beta_2)$ to be the set of functions $g$ such that $g$ is quasiconformally equivalent to $f$ and $\psi$ fixes $\beta_1$ and $\beta_2$. ($\m_f(\beta_1,\beta_2)$ thus defined implies that if $g$ is conformally conjugate to $f$ then $g=f$.) Then 
$$\m_f=\bigcup_{\beta_1,\beta_2}\m_{f}(\beta_1,\beta_2).$$
This is called the \emph{parameter space} of $f$.

\smallskip
With $\Psi(f):=(s_1(f),\dots,s_{q+2}(f))$, $(\m_{f}(\beta_1,\beta_2),\Psi )$ becomes a chart and the collection $\{(\m_{f}(\beta_1,\beta_2),\Psi )\}$ defines a $(q+2)$-dimensional complex atlas on $\m_f$ which makes $\m_f$ a complex manifold. We will always fix this chart on $\m_f$ and say that a subset $A\subset \m_f$ has Lebesgue measure zero if $\Psi(A\cap \m_{f}(\beta_1,\beta_2))\subset \c^{q+2}$ has $(2q+4)$-dimensional Lebesgue measure zero. This is the Lebesgue measure used in Theorem \ref{main}.

\medskip

\noindent{\bf{\emph{Notations.}}} We collect here some notations which will be used throughout the paper. $B(0,r)$ denotes $(q+2)$ complex dimensional ball in the parameter space $\m_f$ for $f\in\s$ with $q$ singular values. While by $D(a,r)$ we always mean the one complex dimensional disk with center $a$ and radius $r$. We will only use Euclidean metric unless otherwise mentioned. For a holomorphic function $f$, the $n$-th derivative of $f$ at a point $z$ is denoted by $D^n f(z)$. Sometimes $f'(z)$ is used as the same meaning for $Df(z)$. By $A\sim_C B$ we mean $\frac{1}{C} B \leq A \leq C B$. The notation $\meas(A)$ denotes the two-dimensional Lebesgue measure of a set $A\subset\c$. We also use $A(r, R)$ for the annulus centred at the origin with inner and outer radius $r$ and $R$, respectively.

\section{Speiser Misiurewicz maps are rare}

In this section, we will prove Theorem \ref{main} stated in the introduction. We also note that Misiurewicz functions mentioned in the sequel will always mean Speiser functions satisfying the Misiurewicz condition proposed in Definition \ref{mis}, unless otherwise stated. The proof uses absence of invariant line fields for Misiurewicz functions, a phase-parameter relation and certain version of distortion control along the post-singular set.

\medskip
As Misiurewicz entire functions are post-singularly bounded, we will thus assume that for such a given function $f_0$, the singular set $\sing(f_{0}^{-1})\subset D(0,R)$ for some $R>0$. By continuity, this can also be made for parameters arbitrarily close to $f_0$. Let $B(0,r)$ be a full dimensional parameter ball in $\m_{f_0}$. So for all $a\in B(0,r)$ and for all $z\in D(0,R)$, there exists a constant $B>0$ such that
\begin{equation}\label{defR}
\sup_{a\in B(0,r), z\in D(0,R)}|D^2 f_a(z)|\leq B.
\end{equation}

\subsection{Phase-parameter relation}
Let $f_0\in\s$ be a Misiurewicz entire function. We will only focus on singular values of $f_0$ in the Julia set in the sequel, which are denoted by $s_j(0)$. By definition, there exists an integer $k$ such that the forward orbit of 
\begin{equation}\label{kj}
v_j(0):=f_{0}^{k}(s_j(0))
\end{equation}
does not contain any critical points. So $v_j(0)$ belongs to a hyperbolic set as ensured by Lemma \ref{hs}. Define
$$v_j(a)=f_{a}^{k}(s_j(a)),$$
where $s_j(a)$ is the corresponding singular value of $f_a$ to $s_j(0)$. By Eremenko and Lyubich's construction of parameter spaces for Speiser functions \cite{eremenko2}, $s_j(a)$ is holomorphic in $a$ for all $j$. We shall thus consider the evolution of the following sequence of maps
\begin{equation}\label{pp}
\xi_{n,j}(a):=f_{a}^{n}(v_j(a)),
\end{equation}
which relates parameter space to the phase space and we will show that it has strong distortion in small one-dimensional disks around $f_0$. 

For simplicity, we write
\begin{equation}\label{hyset}
\Lambda_0=\p_H(f_0).
\end{equation}
By Lemma \ref{hs}, $\Lambda_0$ is a hyperbolic set. Let $B(0,r)$ be a small ball centred around $f_0$ of (complex) dimension $q+2$, where $q=\#\sing(f_{0}^{-1})$.
So there is a holomorphic motion (\cite[page 229]{shishikura8})
$$h: B(0,r)\times\Lambda_0\to\c$$
such that
\begin{itemize}
\item for each $a\in B(0,r)$, $h(a,z)$ is injective in $z$;
\item for each $z\in\Lambda_0$, $h(a,z)$ is holomorphic in $a$;
\item $h_a\circ f_0=f_a\circ h_a$ holds for all $z\in\Lambda_0$.
\end{itemize}
Denote $\Lambda_a=h_a(\Lambda_0)$. We will consider the functions
\begin{equation}\label{xj}
x_j(a)=v_j(a)-h_{a}(v_j(0)).
\end{equation}
$x_j(a)$ is said to be \emph{transversal} in some set if $x_j(a) \not\equiv 0$ in this set.

\smallskip

The following lemma is a special case of \cite[Theorem 1.1]{rempe5}, which tells us that Misiurewicz maps are unstable.

\begin{lemma}\label{nilf}
Let $f_0\in\s$ be Misiurewicz in the sense of Definition \ref{mis'}. Then the Julia set $\j(f_0)$ supports no invariant line fields.
\end{lemma}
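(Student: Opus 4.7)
The plan is to derive this lemma as a direct consequence of \cite[Theorem 1.1]{rempe5}, whose conclusion is precisely the non-existence of measurable invariant line fields on $\j(f)$ for a Speiser function satisfying an appropriate non-recurrence condition. That result is the transcendental counterpart of McMullen's rigidity theorem for semi-hyperbolic rational maps; its proof rests on uniform expansion along the (truncated) post-singular set together with a zoom-in/density-point argument at Lebesgue density points of $\j(f)$. So proving the lemma amounts to checking that every hypothesis of Rempe--van Strien is already built into Definition \ref{mis'}.

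First I would go down the hypothesis list one by one. Definition \ref{mis'} delivers $f_0\in\s$, non-hyperbolic, post-singularly bounded, without parabolic cycles, and with $\omega(s)\cap\jrit(f_0)=\emptyset$ for each $s\in\jing(f_0^{-1})$. Membership in $\s$ excludes wandering domains via \cite[Theorem 3]{eremenko2} (alternatively \cite[Theorem 4.2]{goldberg4}); post-singular boundedness excludes Baker domains via \cite[Theorem 1]{eremenko2}; and the non-recurrence clause is exactly the geometric condition fed into \cite{rempe5}. By Lemma \ref{hs}, this last hypothesis is equivalent to the statement that the truncated post-singular set $\p_H(f_0)$ from \eqref{trunca} is a hyperbolic set, which is the expansion ingredient actually used in the McMullen-style step of their proof.

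Second, with all the hypotheses verified, I simply invoke \cite[Theorem 1.1]{rempe5} to conclude that $\j(f_0)$ carries no measurable invariant line field. The only substantive obstacle is the bookkeeping step of matching the precise phrasing of the non-recurrence hypothesis in \cite{rempe5} to ours. Since Definition \ref{mis'} was chosen with that theorem in mind --- and since the companion result \cite[Theorem 1.2]{rempe5} has already been used in Lemma \ref{hs} to extract hyperbolicity of $\p_H(f_0)$ --- this translation is mechanical and no further dynamical argument is required at this stage. The real work of the proof lives inside \cite{rempe5}; here it suffices to recognize that Speiser Misiurewicz maps in the sense of Definition \ref{mis'} are exactly the class to which that theorem applies.
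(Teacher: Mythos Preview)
Your proposal is correct and matches the paper's approach exactly: the paper simply states that the lemma ``is a special case of \cite[Theorem 1.1]{rempe5}'' without further argument, and your write-up is just a more explicit verification of the hypotheses needed to invoke that citation. No additional ideas are required.
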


 Let $D(0,r)\subset B(0,r)$ be a one-dimensional disk in the parameter space $\m_f$. Since $\m_f$ is a complex manifold, each such disk can be determined by a direction $v\in\mathbb{P}(\c^{q+1})$ such that the plane for which $D(0,r)$ belongs can be parameterized by $\{a\in\c: a=t\alpha, t\in\c\}$ with $\alpha=\{\alpha_1,\dots,\alpha_{q+2}\}$ being a representative for $\mathbb{P}(\c^{q+1})$. We shall prove a version of Theorem \ref{main} for one-dimensional disks around $f$ for almost every directions. Then Theorem \ref{main} follows from this and Fubini's theorem.

Now, if the functions $x_j$ are  identically equal to zero in $B(0,r)$ for all $j$, then the maps in $B(0,r)$ are structurally stable. This means that there is a structurally stable component consisting of Misiurewicz maps, which contradicts with Lemma \ref{nilf} by the theory of Ma\~n\'e, Sad and Sullivan \cite{eremenko2, mane2} (also \cite[Theorem 4.2]{mcmullen4}). So we have the following.

\begin{lemma}[Transversality]\label{indi}
For $a\in B(0,r)$ with $f_0$ being Misiurewicz, there is at least one $j$ such that $x_j(a)$ is not identically equal to zero.
\end{lemma}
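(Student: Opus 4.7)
The approach is by contradiction using the no-invariant-line-fields lemma immediately above. Suppose that $x_j(a) \equiv 0$ on $B(0,r)$ for every $j$ with $s_j(0) \in \jing(f_0^{-1})$. I would then argue that this forces $f_0$ to be $J$-stable throughout the parameter ball, which in turn produces an invariant line field on $\j(f_0)$, contradicting the preceding lemma.

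First, the assumption $v_j(a) = h_a(v_j(0))$ means that every singular value of $f_0$ whose tail eventually lies in $\Lambda_0$ moves rigidly with the holomorphic motion $h$. Consequently, the conjugacy relation $h_a \circ f_0 = f_a \circ h_a$ now propagates from $\Lambda_0$ along every post-singular orbit in $\j(f_0)$. Because $f_0 \in \s$ is Misiurewicz, Proposition \ref{fatoucomponent} together with the results of \cite{eremenko2} and \cite{rempe5} guarantee that $f_0$ has no wandering or Baker domains and no neutral cycles. The classical Ma\~n\'e-Sad-Sullivan argument, carried over to the Eremenko-Lyubich/Speiser setting, then applies: pulling $h_a$ back through iterated preimages of repelling periodic points, which are dense in $\j(f_0)$, and invoking Slodkowski's extension of the $\lambda$-lemma, I obtain a holomorphic motion $\tilde h : B(0,r) \times \j(f_0) \to \c$ satisfying $\tilde h_a \circ f_0 = f_a \circ \tilde h_a$ on $\j(f_0)$.

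Second, the Eremenko-Lyubich chart $\Psi$ on $\m_{f_0}(\beta_1,\beta_2)$ is set up so that two quasiconformally equivalent Speiser functions which are conformally conjugate via a map fixing $\beta_1,\beta_2$ must coincide. Since $B(0,r)$ is a ball of positive complex dimension $q+2$ and $\Psi$ is a chart, the family $a \mapsto f_a$ is nonconstant. Therefore there must exist some $a \in B(0,r)$ for which $\tilde h_a$ is not conformal on $\j(f_0)$. Its Beltrami coefficient $\mu_a = \bar\partial \tilde h_a / \partial \tilde h_a$ is then a nonzero $f_0$-invariant Beltrami differential supported on $\j(f_0)$, and the induced line field $\mu_a/|\mu_a|$ is an $f_0$-invariant line field on $\j(f_0)$, contradicting the lemma just above.

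The main obstacle is the extension step: promoting the holomorphic motion from the finite hyperbolic piece $\Lambda_0$ of $\j(f_0)$ to a motion of the entire, possibly unbounded, Julia set in the transcendental setting, and doing so while preserving the conjugacy. This is precisely where the Misiurewicz hypothesis is indispensable, as it rules out the pathologies (wandering domains, Baker domains, recurrence of the singular orbit to critical points) that would otherwise obstruct the pullback-and-extension procedure.
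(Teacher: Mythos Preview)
Your approach is the same as the paper's: argue by contradiction that if every $x_j$ vanished identically then $f_0$ would be structurally stable on $B(0,r)$, and then invoke the no-invariant-line-fields lemma to rule this out. The paper compresses this into a single sentence, so your proposal is really an elaboration of the intended argument rather than an alternative.

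Two small technical points are worth tightening. First, Slodkowski's extension theorem is a one-complex-parameter result, whereas $B(0,r)\subset\c^{q+2}$; you should either restrict to one-dimensional slices through $0$ (which suffices, since non-triviality of the family already holds on a generic slice) or cite a multi-parameter version such as Bers--Royden. Second, to conclude that the Beltrami coefficient of $\tilde h_a$ is supported on $\j(f_0)$ you need the conjugacy to be conformal on the Fatou set; this requires extending $\tilde h_a$ across the attracting basins (using that the only Fatou components are attracting and the singular values there move with the cycles), which you gesture at but do not spell out. Once that is done, the dichotomy is clean: either $\meas\j(f_0)=0$ and every $\tilde h_a$ is automatically conformal, forcing $f_a\equiv f_0$ and contradicting that the chart $\Psi$ has positive dimension, or $\meas\j(f_0)>0$ and a non-conformal $\tilde h_a$ yields the forbidden invariant line field.
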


Define
\begin{equation}\label{defx}
\I:=\left\{\,j:\, x_j\not\equiv 0\,\right\}.
\end{equation}
Lemma \ref{indi} tells that $\I\neq\emptyset$ in $B(0,r)$. So $\{a\in B(0,r): x_j(a)=0\}$ is an analytic set of codimension one for each $j\in\I$. Therefore, for almost every direction the function $x_j$ for $j\in\I$ is not identically equal to zero in the disk $D(0,r)$ which is determined by this direction.

\smallskip
Now we can conclude that, for some $A>0$, the function $x_j$ for $j\in\I$ can be written as
\begin{equation}\label{cex}
x_{j}(a)=A a^k + \mathcal{O}(a^{k+1})
\end{equation}
for all $a$ sufficiently close to the parameter $0$, where $k$ is some integer. (It is plausible that $k=1$ in \eqref{cex}, but we cannot prove this here.) By \eqref{cex} we then have
\begin{equation}\label{cx}
\left|\frac{x_j(a)}{x'_j(a)} \right|\sim |a|
\end{equation}
holds for all $a\in D(0,r)$ and for small $r$.

\smallskip
Now by taking $r$ sufficiently small, we can make sure that for all $a\in B(0,r)$ singular orbits of $f_a$ follow the corresponding singular orbits of $f_0$ initially. In other words, they stay in a small neighbourhood of $\Lambda_0$ for the first iterates. However, due to the expansion of $f_0$ on $\Lambda_0$ it is highly possible that small disks around a singular value will grow exponentially. To phrase differently, the small one-dimensional disk $D(0,r)$ containing the starting function will grow to some large scale under the action of $\xi_{n,j}$. To make this more precise, we first note that $f_a$ will inherit uniform expansion from that of $\Lambda_0$, as long as $f_a(z)$ stays close to the hyperbolic set $\Lambda_0$.

The following is a simple lemma which can be deduced from hyperbolicity of $\Lambda_0$.

\begin{lemma}
Let $f_0$ be Misiurewicz and $\Lambda_0$ be defined as in \eqref{hyset}. Then there is a neighbourhood $\mathcal{O}$ of $\Lambda_0$, $C>0$ and $\lambda_0>1$ such that $|Df_{0}^{n}(z)|\geq C\lambda_{0}^n$ if $f_{0}^{j}(z)\in\mathcal{O}$ for all $j\leq n$.
\end{lemma}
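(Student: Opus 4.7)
The plan is to deduce the neighbourhood expansion from the uniform expansion that the definition of hyperbolicity already supplies on $\Lambda_0$, via a standard iterate-and-perturb argument. First I would apply the definition of hyperbolic set to $\Lambda_0$, which by Lemma \ref{hs} is a hyperbolic set: there exist $\widetilde{C}>0$ and $\widetilde{\lambda}>1$ with $|Df_0^n(z)|\ge \widetilde{C}\,\widetilde{\lambda}^n$ for every $z\in\Lambda_0$ and every $n\ge 1$. In particular, fixing any $\lambda_0\in(1,\widetilde{\lambda})$ and picking $N$ large enough that $\widetilde{C}\,\widetilde{\lambda}^N\ge 2\lambda_0^N$, one has $|Df_0^N(z)|\ge 2\lambda_0^N$ uniformly on $\Lambda_0$.

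Next I would upgrade this from $\Lambda_0$ to a neighbourhood by continuity. Since $\Lambda_0$ is compact and $Df_0^N$ is holomorphic (hence continuous) on $\c$, there exists an open neighbourhood $\mathcal{O}_1\supset\Lambda_0$ on which $|Df_0^N(w)|\ge \lambda_0^N$. I also need a pointwise lower bound for the first derivative to handle the leftover iterates in the chain rule: since $\Lambda_0$ contains no critical points of $f_0$ (otherwise hyperbolicity would fail at such a point) and $\Lambda_0$ is compact, there exists $m>0$ and a (possibly smaller) neighbourhood $\mathcal{O}\subset\mathcal{O}_1$ on which $|Df_0(w)|\ge m$. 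Setting $c:=\min(1,m)^{N-1}$ then gives a uniform factor for blocks of length less than $N$.

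With $\mathcal{O}$ in hand, the inequality follows by splitting iterates into blocks of length $N$. For any $z$ whose forward orbit $z,f_0(z),\ldots,f_0^{n-1}(z)$ lies in $\mathcal{O}$, write $n=kN+r$ with $0\le r<N$, and apply the chain rule:
\begin{equation*}
|Df_0^n(z)|\;=\;|Df_0^r(f_0^{kN}(z))|\cdot\prod_{j=0}^{k-1}|Df_0^N(f_0^{jN}(z))|\;\ge\; c\,(\lambda_0^N)^k\;\ge\;C\lambda_0^n
\end{equation*}
for a suitable $C>0$ depending only on $c$, $m$, $N$ and $\lambda_0$. This yields the claimed lower bound with the $\mathcal{O}$, $C$ and $\lambda_0$ constructed above.

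The argument is a routine persistence-of-hyperbolicity statement, so I do not anticipate a real obstacle; the only points to be careful about are that the definition of hyperbolicity in the paper is already formulated as a pointwise inequality (so no adapted/Lyapunov metric needs to be introduced), and that the statement should be read as applying to $z$ whose orbit up to time $n-1$ remains in $\mathcal{O}$, which is the natural reading given how the lemma will be used in the sequel to track singular orbits staying close to $\Lambda_0$.
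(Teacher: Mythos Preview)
Your argument is correct and is the standard way to establish persistence of expansion in a neighbourhood of a compact hyperbolic set. The paper itself does not supply a proof of this lemma at all: it merely introduces it as ``a simple lemma which can be deduced from hyperbolicity of $\Lambda_0$'' and moves on, so there is nothing to compare against beyond noting that your block-iterate proof is precisely the routine deduction the authors had in mind. Your remark that the conclusion is to be read for points whose orbit up to time $n-1$ stays in $\mathcal{O}$ is also the correct reading, as confirmed by the very next lemma in the paper.
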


This immediately implies the following.

\begin{lemma}\label{lem34}
There exist $r_0>0$ and $\lambda>1$ such that $|Df^{j}_a(z)|\geq C\lambda^j$ for all $a\in B(0,r)$ with all $r\leq r_0$ and $C>0$, provided that $f_{a}^{k}(z)\in\mathcal{O}$ for all $k\leq j$.
\end{lemma}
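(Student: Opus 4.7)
The plan is to promote the expansion estimate for $f_0$ given by the previous lemma to all nearby maps $f_a$ by a standard hyperbolic persistence argument combined with the chain rule. The three ingredients are: (i) strong expansion on a single long iterate $n_0$ for $f_0$, (ii) continuous dependence of $Df_a^{n_0}$ on $a$ to transfer that estimate to all $f_a$ with $a$ in a small parameter ball, and (iii) a uniform lower bound on $|Df_a|$ to absorb the remainder after a Euclidean division of $j$ by $n_0$.

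First I would fix $C_0>0$ and $\lambda_0>1$ from the previous lemma, so that $|Df_0^n(z)|\geq C_0\lambda_0^n$ whenever the first $n$ iterates of $z$ under $f_0$ lie in $\mathcal{O}$. Setting $\lambda:=\sqrt{\lambda_0}$, I choose $n_0$ large enough that $C_0\lambda_0^{n_0}\geq \lambda^{n_0}$. Since $\Lambda_0$ is hyperbolic, it contains no critical points of $f_0$, so after shrinking $\mathcal{O}$ if necessary I may assume $\overline{\mathcal{O}}$ is compact, avoids the critical set of $f_0$, and satisfies $\inf_{z\in\overline{\mathcal{O}}}|Df_0(z)|\geq 2c_0$ for some $c_0>0$.

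Next, because the parameter space $\m_{f_0}$ carries the Eremenko-Lyubich complex structure, $f_a(z)$ is holomorphic jointly in $(a,z)$ and converges to $f_0(z)$ locally uniformly as $a\to 0$; by the Cauchy integral formula the same holds for $Df_a(z)$. In particular the map $(a,z)\mapsto Df_a^{n_0}(z)$ is continuous on $\m_{f_0}\times\c$, and $(a,z)\mapsto Df_a(z)$ is continuous as well. Using compactness of $\overline{\mathcal{O}}$ together with the strict inequalities just noted, I can pick $r_0>0$ so small that, uniformly for $a\in B(0,r_0)$:
\begin{equation}
|Df_a^{n_0}(z)|\geq \lambda^{n_0}\quad\text{whenever } f_a^k(z)\in\mathcal{O} \text{ for } 0\leq k\leq n_0-1,
\end{equation}
and $|Df_a(z)|\geq c_0$ for every $z\in\overline{\mathcal{O}}$.

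Finally, given $a\in B(0,r_0)$ and $z$ with $f_a^k(z)\in\mathcal{O}$ for all $k\leq j$, write $j=mn_0+\ell$ with $0\leq\ell<n_0$ and apply the chain rule:
\begin{equation}
|Df_a^j(z)|=\Biggl(\prod_{i=0}^{m-1}|Df_a^{n_0}(f_a^{in_0}(z))|\Biggr)\cdot|Df_a^\ell(f_a^{mn_0}(z))|\geq \lambda^{mn_0}\cdot c_0^{\ell}\geq \lambda^j\cdot\frac{c_0^{n_0}}{\lambda^{n_0}},
\end{equation}
which is the desired bound with $C:=c_0^{n_0}/\lambda^{n_0}$. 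The only step requiring any care is the perturbation estimate on a single long iterate, and this is where the full strength of holomorphic dependence of $f_a$ on $a$ in the Eremenko–Lyubich parameter space is used; otherwise the argument is routine.
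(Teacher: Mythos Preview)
The paper does not prove this lemma at all; it is simply asserted to follow ``immediately'' from the preceding lemma giving uniform expansion of $f_0$ on the neighbourhood $\mathcal{O}$ of $\Lambda_0$. Your argument is a correct and standard hyperbolic--persistence proof that fills in this omitted detail: the choice of a long block length $n_0$, the transfer of the block estimate to nearby $f_a$ via joint continuity of $(a,z)\mapsto Df_a^{n_0}(z)$ on the compact set $\overline{\mathcal{O}}$, and the chain--rule decomposition $j=mn_0+\ell$ with the remainder absorbed into the constant (taking $c_0<1$ without loss of generality so that $c_0^{\ell}\geq (c_0/\lambda)^{n_0}\lambda^{\ell}$) all go through as you describe.
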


This means that two points in $\mathcal{O}$ will separate as long as their forward orbits stay in $\mathcal{O}$. More precisely, if $f_{a}^{j}(z),\,f_{a}^{j}(w)\in\mathcal{O}$ for all $j\leq n$, one has
\begin{equation}\label{repeleach}
|f_{a}^{n}(z)-f_{a}^{n}(w)|\geq \lambda^n |z-w|.
\end{equation}

We will do analysis in this neighbourhood $\mathcal{O}$, where any map near our starting map $f_0$ inherits expansion. With this expansion we show later on that a small parameter disk will grow to a definite size (``large scale''). We will take sufficiently small $\delta'>0$ such that the $\delta'$-neighbourhood of $\Lambda_a$ is well inside $\mathcal{O}$. In other words, $\delta'$ is chosen small enough such that $\{z: \dist(z,\Lambda_a)\leq \delta'\}\subset \mathcal{O}$.

\subsection{Distortions along the post-singular set}

Recall that $\xi_{n,j}(a)=f_{a}^{n}(v_j(a))$. We define 
$$\mu_{n,j}(a)=h_a(\xi_{n,j}(0)),$$
where $h_a$ is the holomorphic motion defined above. In the following comparison of phase derivatives, we shall use a simple lemma stated now.

\begin{lemma}\label{simplemma}
Let $b_k\in\c$ for $1\leq k\leq n$. Then
$$\left|\prod_{k=1}^{n} (1+b_k) - 1\right|\leq \exp\left(\sum_{k=1}^{n}|b_k| \right) - 1.$$
\end{lemma}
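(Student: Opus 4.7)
The plan is to reduce the claim to two elementary inequalities: a subset expansion of the product, followed by the scalar bound $1+x\le e^x$. First I would expand the product explicitly by writing
$$\prod_{k=1}^{n}(1+b_k)=\sum_{S\subseteq\{1,\ldots,n\}}\prod_{k\in S}b_k,$$
and subtracting the term $S=\emptyset$ to express $\prod(1+b_k)-1$ as a sum over nonempty subsets of $\{1,\ldots,n\}$.

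Next I would apply the triangle inequality term by term, using that absolute value is multiplicative within each monomial, to obtain
$$\left|\prod_{k=1}^{n}(1+b_k)-1\right|\le\sum_{\emptyset\neq S\subseteq\{1,\ldots,n\}}\prod_{k\in S}|b_k|=\prod_{k=1}^{n}(1+|b_k|)-1,$$
where the last equality just reassembles the sum back into a product, now of the nonnegative reals $1+|b_k|$. At this point the complex numbers $b_k$ have been replaced by their moduli and the remaining task is purely real.

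To finish, I would invoke the standard inequality $1+x\le e^x$ on each factor $1+|b_k|$, yielding $\prod_{k=1}^{n}(1+|b_k|)\le\exp\bigl(\sum_{k=1}^{n}|b_k|\bigr)$, and then subtract $1$ from both sides. No serious obstacle arises; the only subtle point worth emphasizing is that one cannot simply bound $\bigl|\prod(1+b_k)-1\bigr|$ by $\bigl|\prod(1+b_k)\bigr|+1$, since that would yield the useless $\exp(\sum|b_k|)+1$ rather than the sharp $\exp(\sum|b_k|)-1$ demanded by the lemma. The subset expansion is precisely what preserves the correct constant, and it is the step that makes the argument work.
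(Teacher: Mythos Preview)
Your argument is correct and complete. The paper does not supply a proof of this lemma at all; it is stated as a ``simple lemma'' and used without justification, so there is no approach to compare against---your subset expansion followed by $1+x\le e^x$ is exactly the standard way to establish it.
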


\begin{lemma}\label{sdort}
For any $\varepsilon>0$ sufficiently small, there exists $\delta'>0$ sufficiently small and $r>0$ such that for all $a\in B(0,r)$ we have
\begin{equation}
\left|\frac{Df^{n}_{a}(\mu_{0,j}(a))}{Df_{a}^{n}(\xi_{0,j}(a))} - 1 \right|<\varepsilon,
\end{equation}
provided that $|\mu_{k,j}(a)-\xi_{k,j}(a)|<\delta'$ holds for all $a$ and all $k\leq n$.
\end{lemma}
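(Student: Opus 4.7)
The natural approach is to compare the two products of derivatives along the two orbits via the chain rule, apply Lemma \ref{simplemma}, and then use expansion to telescope the error geometrically.

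First I would write, by the chain rule,
\begin{equation}
\frac{Df_a^n(\mu_{0,j}(a))}{Df_a^n(\xi_{0,j}(a))} = \prod_{k=0}^{n-1} \frac{Df_a(\mu_{k,j}(a))}{Df_a(\xi_{k,j}(a))} = \prod_{k=0}^{n-1}(1+b_k),
\end{equation}
where
\begin{equation}
b_k = \frac{Df_a(\mu_{k,j}(a)) - Df_a(\xi_{k,j}(a))}{Df_a(\xi_{k,j}(a))}.
\end{equation}
Note that $\mu_{k+1,j}(a) = f_a(\mu_{k,j}(a))$ by equivariance of the holomorphic motion $h_a$, so both $(\mu_{k,j}(a))$ and $(\xi_{k,j}(a))$ are genuine $f_a$-orbits, and $\mu_{k,j}(a)\in\Lambda_a$ for every $k$. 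Using \eqref{defR} to control the numerator by $B|\mu_{k,j}(a)-\xi_{k,j}(a)|$, it remains to bound the denominator from below and to sum the distances.

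For the denominator, I would note that $\Lambda_0$ is compact, forward invariant and, by the Misiurewicz condition, contains no critical points of $f_0$. Hence $|Df_0|$ admits a positive lower bound on some neighbourhood of $\Lambda_0$; by continuity of $a\mapsto Df_a$ and by shrinking $r$, there exists $c>0$ such that $|Df_a(z)|\geq c$ whenever $a\in B(0,r)$ and $\dist(z,\Lambda_a)\leq \delta'$ (ensuring in particular that such $z$ lie in the neighbourhood $\mathcal{O}$ from the previous lemma). Since $\mu_{k,j}(a)\in\Lambda_a$ and $|\mu_{k,j}(a)-\xi_{k,j}(a)|<\delta'$ by hypothesis, both points lie in this region, so
\begin{equation}
|b_k|\leq \frac{B}{c}\,|\mu_{k,j}(a)-\xi_{k,j}(a)|.
\end{equation}

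For the summation, I would apply \eqref{repeleach} to the pair $\mu_{k,j}(a),\xi_{k,j}(a)\in\mathcal{O}$: since their images up to step $n-k$ remain in $\mathcal{O}$,
\begin{equation}
|\mu_{k,j}(a)-\xi_{k,j}(a)|\leq \lambda^{-(n-k)}|\mu_{n,j}(a)-\xi_{n,j}(a)|\leq \lambda^{-(n-k)}\delta'.
\end{equation}
Summing gives $\sum_{k=0}^{n-1}|b_k|\leq \frac{B\delta'}{c(\lambda-1)}$. By Lemma \ref{simplemma},
\begin{equation}
\left|\prod_{k=0}^{n-1}(1+b_k)-1\right|\leq \exp\!\left(\frac{B\delta'}{c(\lambda-1)}\right)-1,
\end{equation}
which can be made less than $\varepsilon$ by choosing $\delta'$ small, uniformly in $n$ and in $a\in B(0,r)$.

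The main conceptual point, rather than an obstacle, is the backward telescoping: the hypothesis bounds only the final distance by $\delta'$, and the expansion along $\Lambda_0$ is used \emph{in the favourable direction} to force the earlier distances to be exponentially smaller. The only routine care needed is to shrink $r$ so that $h_a(\Lambda_0)$, the expansion constant $\lambda$, and the lower bound $c$ on $|Df_a|$ persist uniformly for $a\in B(0,r)$, which is guaranteed by continuity of the holomorphic motion and of the family $\{f_a\}$.
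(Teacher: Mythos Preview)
Your proof is correct and follows essentially the same route as the paper: chain rule, Lemma~\ref{simplemma}, the second-derivative bound~\eqref{defR} for the numerator, and the backward telescoping via the expansion estimate to get a geometric sum in $\delta'$. The only cosmetic difference is that the paper bounds the denominator by $|Df_a(\xi_{k,j}(a))|\ge C\lambda$ directly from the one-step case of the expansion lemma, whereas you phrase it as a uniform lower bound $c$ coming from absence of critical points near $\Lambda_a$; these are the same thing.
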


\begin{proof}
Using the chain rule and the fact that $\xi_{k,j}(a)=f_{a}^{k}(\xi_{0,j}(a))$ and $\mu_{k,j}(a)=f_{a}^{k}(\mu_{0,j}(a))$, one can obtain, by Lemma \ref{simplemma}, that
\begin{equation}
\begin{aligned}
\left|\frac{Df^{n}_{a}(\mu_{0,j}(a))}{Df_{a}^{n}(\xi_{0,j}(a))} - 1 \right|&=\left|\prod_{k=0}^{n-1}\frac{Df_{a}(\mu_{k,j}(a))}{Df_{a}(\xi_{k,j}(a))} - 1 \right|\\
&\leq \exp\left(\sum_{k=0}^{n-1}\left|\frac{Df_{a}(\mu_{k,j}(a))}{Df_{a}(\xi_{k,j}(a))} - 1\right| \right)-1.
\end{aligned}
\end{equation}

Thus to obtain the result it suffices to prove that the sum appearing on the right-hand side in the above inequality can be made arbitrarily small. We have, by Lemma \ref{lem34},
\begin{equation}
\begin{aligned}
\sum_{k=0}^{n-1}\left|\frac{Df_{a}(\mu_{k,j}(a))}{Df_{a}(\xi_{k,j}(a))} - 1\right|&\leq\sum_{k=0}^{n-1}\left|\frac{Df_{a}(\mu_{k,j}(a))-Df_{a}(\xi_{k,j}(a))}{Df_{a}(\xi_{k,j}(a))}\right|\\
&\leq \frac{1}{C\lambda} \sum_{k=0}^{n-1}\left|Df_{a}(\mu_{k,j}(a))-Df_{a}(\xi_{k,j}(a))\right|\\
&\leq \frac{1}{C\lambda}\max_{z\in D(0,R), a\in B(0,r)} |D^2 f_{a}(z)|\sum_{k=0}^{n-1}\left|\mu_{k,j}(a)-\xi_{k,j}(a)\right|\\
&\leq \frac{B}{C\lambda} \sum_{k=0}^{n-1}\frac{1}{\lambda^{n-k}}\left|\mu_{n,j}(a)-\xi_{n,j}(a)\right|\\
&\leq C_{\lambda}\delta',
\end{aligned}
\end{equation}
where $C_{\lambda}$ depends only on $\lambda$ and $B$. By choosing $\delta'>0$ sufficiently small we can reach our conclusion.
\end{proof}

The above lemma gives strong distortions up to some scale for all parameters as along as two orbits in the dynamical plane are both close to the singular orbits. It means that, under the assumption of the above lemma, for some constant $C>1$,
\begin{equation}
\begin{aligned}
\left|\xi_{n,j}(a)-\mu_{n,j}(a)\right|&\sim_C |Df_{a}^{n}(\xi_{0,j}(a))||\xi_{0,j}(a)-\mu_{0,j}(a)|\\
&=|Df_{a}^{n}(\xi_{0,j}(a))||x_{j}(a)|
\end{aligned}
\end{equation}
To show that the function $\xi_{n,j}$ is an almost linear map, we shall also need to compare phase and parameter derivatives. 

First, under the assumption of the above lemma, we have, by expanding the function $f_{a}^n$ near $\xi_{0,j}(a)$, that
\begin{equation}\label{express}
\xi_{n,j}(a)-\mu_{n,j}(a)=Df_{a}^{n}(\xi_{0,j}(a))x_{j}(a)+ E_{n,j}(a),
\end{equation}
where $E_{n,j}$ satisfies $|E_{n,j}(a)|\leq C_1 |\xi_{n,j}(a)-\mu_{n,j}(a)|$ for some small constant $C_1$.

\begin{lemma}[Comparison of phase and parameter derivatives]\label{pp}
For any small $\varepsilon>0$, by choosing $\delta'>0$ sufficiently small we have that, for any $\delta''<\delta'$ there exists $r>0$ with the following property: For any $a\in D(0,r)$, if $|\xi_{k,j}(a)-\mu_{k,j}(a)|\leq\delta'$ for $k\leq n$ and $|\xi_{n,j}(a)-\mu_{n,j}(a)|\geq\delta''$, then
\begin{equation}
\left|\frac{\xi'_{n,j}(a)}{Df_{a}^{n}(\xi_{0,j}(a))x'_j(a)} -1 \right|<\varepsilon.
\end{equation}
\end{lemma}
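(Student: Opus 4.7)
The strategy is to differentiate the identity \eqref{express} in the parameter and then show that the unwanted terms are small relative to $Df_a^n(\xi_{0,j}(a))\,x'_j(a)$. Differentiating \eqref{express} in $a$ gives
\begin{equation*}
\xi'_{n,j}(a)-\mu'_{n,j}(a)\;=\;\bigl[Df_a^n(\xi_{0,j}(a))\bigr]'\,x_j(a)+Df_a^n(\xi_{0,j}(a))\,x'_j(a)+E'_{n,j}(a),
\end{equation*}
so, dividing by $Df_a^n(\xi_{0,j}(a))\,x'_j(a)$, the conclusion reduces to showing that the three ratios
\begin{equation*}
\frac{\mu'_{n,j}(a)}{Df_a^n(\xi_{0,j}(a))\,x'_j(a)},\qquad
\frac{[Df_a^n(\xi_{0,j}(a))]'\,x_j(a)}{Df_a^n(\xi_{0,j}(a))\,x'_j(a)},\qquad
\frac{E'_{n,j}(a)}{Df_a^n(\xi_{0,j}(a))\,x'_j(a)}
\end{equation*}
can each be made smaller than $\varepsilon/3$ after tuning $\delta'$ and then $r$.

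First I would pin down the size of the denominator. Lemma~\ref{sdort} combined with \eqref{express} yields $|\xi_{n,j}(a)-\mu_{n,j}(a)|\sim_C |Df_a^n(\xi_{0,j}(a))|\,|x_j(a)|$, and together with the hypothesis $|\xi_{n,j}(a)-\mu_{n,j}(a)|\ge\delta''$ and \eqref{cx} this gives
\begin{equation*}
|Df_a^n(\xi_{0,j}(a))\,x'_j(a)|\;\gtrsim\;\frac{\delta''}{|a|}\;\ge\;\frac{\delta''}{r},
\end{equation*}
so the leading term becomes large once $r\ll \delta''$. The first ratio is then controlled because $\mu'_{n,j}(a)=\partial_a h_a(\xi_{n,j}(0))$ is uniformly bounded in $n$: the base point $\xi_{n,j}(0)$ lies in the compact hyperbolic set $\Lambda_0$ and the holomorphic motion $h$ has uniformly bounded $a$-derivative there. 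Hence this ratio is $O(r/\delta'')$, which is small for $r$ small. For the third ratio I would use Cauchy's integral formula on a disk of radius comparable to $r$ centred at $a$ to bound $|E'_{n,j}(a)|$ by a multiple of $\max|E_{n,j}|/r$; since $|E_{n,j}|\le C_1|\xi_{n,j}-\mu_{n,j}|$ with $C_1$ as small as we like by choice of $\delta'$, the third ratio is negligible as well.

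The main obstacle is the middle ratio, which I would rewrite as $\bigl([Df_a^n(\xi_{0,j}(a))]'/Df_a^n(\xi_{0,j}(a))\bigr)\cdot\bigl(x_j(a)/x'_j(a)\bigr)$. By \eqref{cx}, the second factor is comparable to $|a|\le r$, so it is enough to bound the logarithmic $a$-derivative of $Df_a^n(\xi_{0,j}(a))$ by a constant multiple of $1/r$, uniformly in $n$. I would obtain this via Cauchy's inequality on a disk of radius comparable to $r$ sitting inside $D(0,r)$, provided $|Df_a^n(\xi_{0,j}(a))|$ remains comparable across such a disk. This last statement is a parameter-space analogue of Lemma~\ref{sdort}: a telescoping computation in exactly the same spirit, but comparing $Df_{a_0}^n(\xi_{0,j}(a_0))$ with $Df_a^n(\xi_{0,j}(a))$ for $a$ near $a_0$, using the holomorphic (hence locally Lipschitz) dependence of $\xi_{k,j}(a)$ on $a$ and the fact that for every $a\in D(0,r)$ the orbits $(\xi_{k,j}(a))_{k\le n}$ all stay in the $\delta'$-neighbourhood of $\Lambda_a$ where uniform expansion and bounded second derivatives apply. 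Translating phase-space distortion into parameter-space distortion in this way is the technical heart of the argument, and it is what ultimately forces the choices of $\delta'$ (small, to kill $C_1$), then $\delta''<\delta'$, and finally $r$ (very small, to beat both $1/\delta''$ and the logarithmic-derivative bound).
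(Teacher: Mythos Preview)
Your overall strategy---differentiate \eqref{express} and bound the three residual ratios after dividing by $F_{n,j}(a)x'_j(a)$, where $F_{n,j}(a):=Df_a^n(\xi_{0,j}(a))$---is exactly the paper's, and your treatment of the $\mu'_{n,j}$ term matches. The gap is in the middle ratio.

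Your stated target, namely $|F'_{n,j}/F_{n,j}|\le C/r$ with $C$ a fixed constant, is \emph{not} sufficient: combined with $|x_j/x'_j|\sim|a|\le r$ from \eqref{cx} it only yields an $O(1)$ bound on the middle ratio, not something below $\varepsilon$. (Take $|a|=r/2$: the product is $\sim C/2$.) A Cauchy estimate using mere \emph{comparability} of $|F_{n,j}|$ on a disk of radius $\sim r$ cannot do better. To make Cauchy work you would need $F_{n,j}(b)/F_{n,j}(a)$ to be \emph{close to $1$} on such a disk---but proving that (this is Lemma~\ref{sdort2}) already rests on the bound $n\lesssim\log(1/|a|)$, which is the real crux you are missing.

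The paper bypasses Cauchy and estimates the logarithmic derivative directly via the chain rule:
\[
\frac{|F'_{n,j}(a)|}{|F_{n,j}(a)|}\;\le\;\sum_{k=0}^{n-1}\frac{|D_a Df_a(\xi_{k,j}(a))|}{|Df_a(\xi_{k,j}(a))|}\;\le\;\frac{B}{C\lambda}\,n.
\]
Then from $|F_{n,j}(a)|\,|x_j(a)|\le(1+\varepsilon')\delta'$ together with $|F_{n,j}(a)|\ge C\lambda^n$ one extracts $n\lesssim\log(1/|x_j(a)|)\lesssim\log(1/|a|)$ via \eqref{cex}, so that the middle ratio is $\lesssim |a|\log(1/|a|)\to 0$ as $r\to 0$. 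This logarithmic control of $n$ is the quantitative gain your Cauchy route does not supply on its own. (For $E'_{n,j}$ the paper simply asserts uniform boundedness and uses $|F_{n,j}x'_j|\gtrsim\delta''/|a|\to\infty$; your Cauchy treatment of $E'_{n,j}$ faces the additional problem that the hypothesis $|\xi_{k,j}-\mu_{k,j}|\le\delta'$ is given only at $a$, not on a disk around it.)
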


\begin{proof}
Denote by $F_{n,j}(a)=Df_{a}^{n}(\xi_{0,j}(a))$. By the discussion after Lemma \ref{sdort} we have
$$|\xi_{n,j}(a)-\mu_{n,j}(a)|\sim_C |F_{n,j}(a)||x_{j}(a)|.$$
So, by the assumption of the lemma, for sufficiently small $r>0$ there exists $\varepsilon'>0$ such that
\begin{equation}\label{sg}
(1-\varepsilon')\delta''\leq|F_{n,j}(a)||x_{j}(a)|\leq (1+\varepsilon')\delta'.
\end{equation}
Then we see that
$$|F_{n,j}(a)|^{-1}(1-\varepsilon')\delta''\leq|x_{j}(a)|\leq |F_{n,j}(a)|^{-1}(1+\varepsilon')\delta'.$$
By using the fact that $|F_{n,j}(a)|\geq C \lambda^{n}$ and by taking logarithm on both sides of the above formula, we have
\begin{equation}\label{imn}
\log\frac{C\lambda^n}{(1+\varepsilon')\delta'}\leq\log\frac{|F_{n,j}(a)|}{(1+\varepsilon')\delta'}\leq -\log|x_{j}(a)|\leq\log\frac{|F_{n,j}(a)|}{(1-\varepsilon')\delta''}.
\end{equation}

Now it follows from \eqref{express} that
\begin{equation}
\xi'_{n,j}(a)-\mu'_{n,j}(a)=F'_{n,j}(a)x_{j}(a)+F_{n,j}(a)x'_{j}(a)+E'_{n,j}(a),
\end{equation}
which can be written as
\begin{equation}\label{imm}
\xi'_{n,j}(a)=F_{n,j}(a)x'_{j}(a)+F_{n,j}(a)\left(\frac{F'_{n,j}(a)x_{j}(a)}{F_{n,j}(a)}+\frac{E'_{n,j}(a)+\mu'_{n,j}(a)}{F_{n,j}(a)}\right).
\end{equation}
Since $|F_{n,j}(a)|=|Df_{a}^{n}(\xi_{0,j}(a))|=\prod_{k=0}^{n-1}|Df_{a}(\xi_{k,j}(a))|$, we see that, combined with \eqref{imn},
\begin{equation}
\begin{aligned}
\frac{|F'_{n,j}(a)|}{|F_{n,j}(a)|}=\sum_{k=0}^{n-1}\frac{|D_a Df_{a}(\xi_{k,j}(a))|}{|Df_{a}(\xi_{k,j}(a))|}&\leq B\sum_{k=0}^{n-1}\frac{1}{|Df_{a}(\xi_{k,j}(a))|}\\
&\leq \frac{B}{C\lambda} n\\
&\lesssim \frac{B}{C\lambda}\left(\log\frac{\lambda}{(1+\varepsilon')\delta'}\right)^{-1}(-\log|x_{j}(a)|).
\end{aligned}
\end{equation}
This implies that, if $a$ tends to $0$, we have, by \eqref{cex},
$$\frac{|F'_{n,j}(a)x_{j}(a)|}{|F_{n,j}(a)|}\to 0.$$
Moreover, since both $|E'_{n,j}(a)|$ and $|\mu'_{n,j}(a)|$ are uniformly bounded above for all $a\in D(0,r)$, by using \eqref{sg} and \eqref{cx} we also obtain that
$$\frac{|E'_{n,j}(a)+\mu'_{n,j}(a)|}{|F_{n,j}(a)||x'_{j}(a)|}=\frac{|E'_{n,j}(a)+\mu'_{n,j}(a)|}{|F_{n,j}(a)||x_{j}(a)|}\frac{|x_j(a)|}{|x'_{j}(a)|}\to 0.$$
The above discussions altogether give us the desired conclusion.
\end{proof}

We also need the following result.

\begin{lemma}\label{sdort2}
Assume that $a,b\in B(0,r)$ satisfy the conditions of Lemma \ref{sdort}. Then the following is true:
\begin{equation}
\begin{aligned}
\left|\frac {Df_{a}^{n}(v_j(a))}{Df_{b}^{n}(v_j(b))} -1 \right|<\varepsilon.
\end{aligned}
\end{equation}
\end{lemma}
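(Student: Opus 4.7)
My plan is to reduce the inequality to Lemma \ref{sdort} by inserting the hyperbolic-set points $\mu_{0,j}(a)=h_a(v_j(0))$ and $\mu_{0,j}(b)=h_b(v_j(0))$ as intermediaries. I factor
\begin{equation*}
\frac{Df_{a}^{n}(v_j(a))}{Df_{b}^{n}(v_j(b))}=\frac{Df_{a}^{n}(v_j(a))}{Df_{a}^{n}(\mu_{0,j}(a))}\cdot\frac{Df_{a}^{n}(\mu_{0,j}(a))}{Df_{b}^{n}(\mu_{0,j}(b))}\cdot\frac{Df_{b}^{n}(\mu_{0,j}(b))}{Df_{b}^{n}(v_j(b))}.
\end{equation*}
The outer two factors are each within $\varepsilon/3$ of $1$ by Lemma \ref{sdort} applied separately at $a$ and at $b$, whose hypothesis is assumed for both parameters. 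All of the real work therefore lies in showing that the middle factor is close to $1$; note that its derivatives are taken along orbits that sit on the hyperbolic sets $\Lambda_a$ and $\Lambda_b$.

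To estimate the middle factor I would replicate the chain-rule / Lemma \ref{simplemma} telescoping from Lemma \ref{sdort}, obtaining
\begin{equation*}
\left|\frac{Df_{a}^{n}(\mu_{0,j}(a))}{Df_{b}^{n}(\mu_{0,j}(b))}-1\right|\leq\exp\Bigl(\sum_{k=0}^{n-1}\Bigl|\frac{Df_{a}(\mu_{k,j}(a))}{Df_{b}(\mu_{k,j}(b))}-1\Bigr|\Bigr)-1.
\end{equation*}
Each denominator is bounded below by $C\lambda$ since $\mu_{k,j}(b)\in\Lambda_b$, and each numerator splits as
\begin{equation*}
Df_a(\mu_{k,j}(a))-Df_b(\mu_{k,j}(b))=\bigl[Df_a(\mu_{k,j}(a))-Df_a(\mu_{k,j}(b))\bigr]+\bigl[Df_a(\mu_{k,j}(b))-Df_b(\mu_{k,j}(b))\bigr].
\end{equation*}
The first bracket is at most $B\,|\mu_{k,j}(a)-\mu_{k,j}(b)|=B\,|h_a(\xi_{k,j}(0))-h_b(\xi_{k,j}(0))|$; since $a\mapsto h_a(z)$ is holomorphic on $B(0,r)$ for every $z$ and the $\lambda$-lemma delivers uniform continuity over $z\in\Lambda_0$, this quantity is at most $C_1|a-b|$ independent of $k$. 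The second bracket is bounded by $C_2|a-b|$ by the holomorphic dependence of $f_a$ on the parameter. Combining, each summand is at most $C_3|a-b|$ for a constant $C_3$ independent of $k$ and $n$.

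The resulting bound on the full sum is $nC_3|a-b|/(C\lambda)$, which grows linearly in $n$, so the decisive step is to control $n$. Here I would exploit the hypothesis of Lemma \ref{sdort}: by the expansion estimate \eqref{repeleach} one has $|\mu_{k,j}(a)-\xi_{k,j}(a)|\gtrsim\lambda^{k}|x_j(a)|$ while both orbits stay inside $\mathcal{O}$, so the requirement $|\mu_{k,j}(a)-\xi_{k,j}(a)|<\delta'$ for $k\leq n$ forces $n\leq\log(\delta'/|x_j(a)|)/\log\lambda=O(\log(1/|a|))$ by \eqref{cex}. For $a,b\in D(0,r)$ one then has $|a-b|\leq 2r$, and $n|a-b|\lesssim r\log(1/r)$ can be made as small as desired by shrinking $r$. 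The principal delicate point is exactly this matching: the linear-in-$n$ error would not close without the implicit cap on $n$ coming from the hypothesis, and one must verify that $r\log(1/r)$ dominates the accumulated error uniformly over the parameter range of interest.
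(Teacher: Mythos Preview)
Your approach is correct and takes a genuinely different route from the paper's. The paper compares both derivatives directly to the base parameter $0$: it expands each factor analytically, $f'_a(\xi_{m,j}(a))=f'_0(\xi_{m,j}(0))\bigl(1+b_\ell a^\ell+\mathcal{O}(a^{\ell+1})\bigr)$, multiplies over $m$, and argues that the resulting product $\frac{1+cna^\ell+\cdots}{1+cnb^\ell+\cdots}$ is close to $1$ once $n|a|^\ell$ and $n|b|^\ell$ are small---which follows from the same bound $n\lesssim\log(1/|a|)$ that you derive. You instead route through the motion points $\mu_{0,j}(a)$ and $\mu_{0,j}(b)$, collapsing two of the three factors into literal applications of Lemma~\ref{sdort} and isolating the genuinely new content (parameter variation along the hyperbolic sets $\Lambda_a$ versus $\Lambda_b$) in the middle factor. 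This is cleaner and more modular, at the price of one extra ingredient, the uniform Lipschitz bound $|h_a(z)-h_b(z)|\le C_1|a-b|$ for $z\in\Lambda_0$; that bound is indeed available from holomorphy in $a$ and Cauchy estimates on a slightly smaller ball.

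One point to tighten: your closing inequality $n|a-b|\lesssim r\log(1/r)$ does not follow from $n=O(\log(1/|a|))$ and $|a-b|\le 2r$ alone when $|a|\ll r$. Either use $|a-b|\le|a|+|b|$ together with $n\lesssim\min\bigl(\log(1/|a|),\log(1/|b|)\bigr)$ and the monotonicity of $t\mapsto t\log(1/t)$ on $(0,1/e)$, or invoke the hypothesis (which is assumed for \emph{all} parameters in the ball) at an auxiliary parameter of modulus comparable to $r$ to obtain $n\lesssim\log(1/r)$ directly. In the Whitney-disk application of Lemma~\ref{est}, where all parameters have comparable modulus, this issue does not arise.
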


\begin{proof}
We note that $f'_{a}(\xi_{m,j}(a))$ depends analytically on $a$ for $a\in B(0,r)$. So we can write
\begin{equation}\label{exphere}
f'_a(\xi_{m,j}(a))=f'_0(\xi_{m,j}(0))\left(1+ b_{\ell} a^{\ell} +\mathcal{O}\left(a^{\ell+1}\right) \right)
\end{equation}
for some constant $b_{\ell}$ and integer $\ell\geq 1$.
Moreover, by assumption we have
\begin{equation}
\begin{aligned}
\delta'>|\mu_{n,j}(a)-\xi_{n,j}(a)|&=|f_{a}^{n}(\xi_{0,j}(a))-f_{a}^{n}(\mu_{0,j}(a))|\\
&\geq C\lambda^{n}|\xi_{0,j}(a)-\mu_{0,j}(a)|\\
&=C\lambda^n |x_j(a)|.
\end{aligned}
\end{equation}
So there exists a constant $C'$ depending on $\lambda$ and $\delta'$ such that
$$n\leq C'\log\frac{1}{|x_j(a)|}.$$
Together with \eqref{cex}, this gives, for some constant $C''$ (depending on $C'$ and the integer $k$ in \eqref{cex}), that
$$n\leq C''\log\frac{1}{|a|}.$$

Now by the chain rule and \eqref{exphere} we have
\begin{equation}
\begin{aligned}
\frac {Df_{a}^{n}(v_j(a))}{Df_{b}^{n}(v_j(b))} = \prod_{m=0}^{n-1}\frac{f'_{a}(\xi_{m,j}(a))}{f'_{b}(\xi_{m,j}(b))}&=\prod_{m=0}^{n-1}\frac{1+b_{\ell} a^{\ell}+ \mathcal{O}\left(a^{\ell+1}\right)}{1+b_{\ell} b^{\ell}+ \mathcal{O}\left(b^{\ell+1}\right)}\\
&=\frac{1+cn a^{\ell}+\cdots}{1+cn b^{\ell}+\cdots},
\end{aligned}
\end{equation}
where $c$ is a constant depending on $b_{\ell}$. So by taking $r>0$ sufficiently small, this product can be made sufficiently close to $1$. This gives the lemma.
\end{proof}

Lemma \ref{sdort2}, together with Lemma \ref{pp}, tells us that a small disk will finally grow to a definite size ("large scale") with strong bounded distortion. We say that $D(a_0, r_0)$ with $a_0\in D(0,r)$ is a $\kappa$-Whitney disk if there exists $\kappa\in (0,1)$ such that $r_0/|a_0|=\kappa$.

\begin{lemma}\label{est}
Let $f_0$ be Misiurewicz. There exists some $\kappa\in (0,1)$ such that for all $r>0$ sufficiently small and any $\varepsilon>0$ small the following holds. For any $\kappa$-Whitney disk $D\subset D(0,r)$ we have
\begin{equation}\label{stdi}
\left|\frac{\xi'_{n,j}(a)}{\xi'_{n,j}(b)} - 1 \right|<\varepsilon
\end{equation}
as long as conditions in Lemma \ref{pp} are satisfied for all $a,b\in D$.
\end{lemma}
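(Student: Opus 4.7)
The plan is to combine Lemma \ref{pp} (phase-parameter comparison) with Lemma \ref{sdort2} (uniformity in the parameter of the dynamical derivative) and the explicit local form \eqref{cex} of $x_j$. To avoid notational clash with the integer $k$ appearing in \eqref{cex}, I will denote the Whitney constant by $\kappa$, so that $D = D(a_0, \kappa|a_0|)$ for some $a_0 \in D(0,r)$.

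Applying Lemma \ref{pp} at both $a$ and $b$ gives
$$\xi'_{n,j}(a) = Df^n_a(\xi_{0,j}(a))\, x'_j(a)\,(1+O(\varepsilon)), \qquad \xi'_{n,j}(b) = Df^n_b(\xi_{0,j}(b))\, x'_j(b)\,(1+O(\varepsilon)),$$
so that
$$\frac{\xi'_{n,j}(a)}{\xi'_{n,j}(b)} = \frac{Df^n_a(\xi_{0,j}(a))}{Df^n_b(\xi_{0,j}(b))}\cdot\frac{x'_j(a)}{x'_j(b)}\cdot(1+O(\varepsilon)).$$
The task thus reduces to showing that each of the two explicit ratios on the right is close to $1$.

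The first ratio is $1+O(\varepsilon)$ by Lemma \ref{sdort2}, since $v_j=\xi_{0,j}$ and the hypotheses of Lemma \ref{sdort} are inherited from the assumption of the present lemma. For the second, differentiating \eqref{cex} gives $x'_j(a) = kAa^{k-1}(1+O(a))$, whence
$$\frac{x'_j(a)}{x'_j(b)} = \left(\frac{a}{b}\right)^{k-1}(1+O(r)).$$
On a $\kappa$-Whitney disk both $a$ and $b$ lie within a factor $1+\kappa$ of $a_0$, so $|a/b-1|=O(\kappa)$ and hence $(a/b)^{k-1}=1+O(\kappa)$, with a constant depending only on the fixed integer $k$. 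Choosing $\kappa$ small in terms of $k$ and $\varepsilon$, and then $r$ small, yields \eqref{stdi}.

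The main subtlety is that the Whitney constant $\kappa$ must be fixable independently of $n$. This is legitimate because the error bound in Lemma \ref{sdort2} is uniform in $n$ under its standing hypothesis, while the remaining estimate $(a/b)^{k-1} = 1+O(\kappa)$ is purely geometric and $n$-free. Once $\kappa$ is chosen in this way, the bound \eqref{stdi} holds uniformly for all $n$ meeting the hypotheses of Lemma \ref{pp}.
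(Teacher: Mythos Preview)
Your argument is correct and follows essentially the same route as the paper: factor $\xi'_{n,j}(a)/\xi'_{n,j}(b)$ via Lemma \ref{pp}, control the dynamical-derivative ratio with Lemma \ref{sdort2}, and use the local expansion \eqref{cex} together with the Whitney geometry to show $x'_j(a)/x'_j(b)$ is close to $1$. The paper's proof is terser (it just says ``$x_j$ has bounded distortion in $D$'' and then cites Lemmas \ref{sdort}, \ref{pp}, \ref{sdort2}), but your version spells out exactly the same chain of estimates.
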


\begin{proof}
Since $D\subset D(0,r)$ is a Whitney disk, there exists a constant $\kappa'$ depending on $\kappa$ such that
$$\dist(0,D)\leq \kappa' \diam(D).$$
This means that the function $x_j$ has bounded distortion in $D$. Then by using Lemma \ref{sdort}, Lemma \ref{pp} and Lemma \ref{sdort2}, we see that \eqref{stdi} holds for all parameters in the Whitney disk $D$.
\end{proof}

\begin{lemma}\label{scalelarge}
Let $f_0$ be Misiurewicz. For each $j\in\I$ there exist some $\kappa\in (0,1)$ and $S>0$ such that for all $r>0$ sufficiently small, one can find $n_j$ such that $\xi_{n_j,j}(D(a_0, r_0))\subset\mathcal{O}$ has diameter at least $S$, where $D(a_0, r_0)$ is any $\kappa$-Whitney disk.
\end{lemma}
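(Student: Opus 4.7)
The plan is to follow a $k$-Whitney disk $D = D(a_0, r_0) \subset D(0,r)$ along the iteration of $\xi_{n,j}$ until it stretches to a definite scale, while keeping the hypotheses of Lemmas~\ref{sdort}, \ref{pp}, \ref{sdort2}, \ref{est} valid throughout. The main quantitative input is the equivalence recorded after Lemma~\ref{sdort},
$$|\xi_{n,j}(a)-\mu_{n,j}(a)|\sim_C |Df_a^n(\xi_{0,j}(a))|\cdot|x_j(a)|,$$
valid as long as the separation has been at most $\delta'$ at every earlier step. Since $|Df_a^n|\geq C\lambda^n$ and $x_j(a_0)\neq 0$ for $j\in\I$, this separation must eventually exceed any fixed threshold $S$.

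First I would set up constants. Let $M=\sup\{|Df_a(z)|:a\in B(0,r),\, z\in\overline{\mathcal{O}}\}$, finite since $\mathcal{O}$ may be chosen bounded, and let $C_2$ absorb both the comparability of $|Df_a^n(\xi_{0,j}(a))|$ across $D$ (via Lemma~\ref{sdort2}) and the bounded distortion of $|x_j|$ on a Whitney disk (a consequence of $x_j(a)=Aa^{k_0}+\mathcal{O}(a^{k_0+1})$ combined with $r_0=k|a_0|$, where $k_0$ is the integer from \eqref{cex}). Choose $0<\delta''<S$ so small that $C_2^2 M S\leq\delta'$, and define the stopping time
$$n_j := \min\{n\geq 0:\, |\xi_{n,j}(a_0)-\mu_{n,j}(a_0)|\geq S\},$$
finite by the previous paragraph. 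For $n<n_j$ the separation at $a_0$ stays below $S$, and the equivalence above propagates this across $D$ to a bound $\leq C_2 S\leq \delta'$; at $n=n_j$ the separation at $a_0$ lies in $[S,MS]$, hence at every $a\in D$ in $[S/C_2,\, C_2 M S]\subset [\delta'',\delta']$. Thus $\xi_{n_j,j}(D)\subset\mathcal{O}$, and the hypotheses of Lemmas~\ref{sdort}, \ref{pp}, \ref{sdort2}, and hence \ref{est}, all hold uniformly on $D$ at time $n_j$.

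Applying Lemma~\ref{est} gives bounded distortion of $\xi_{n_j,j}$ on $D$, so its diameter is comparable to $|\xi'_{n_j,j}(a_0)|\cdot r_0\sim |Df_{a_0}^{n_j}(\xi_{0,j}(a_0))|\cdot|x_j'(a_0)|\cdot r_0$ by Lemma~\ref{pp}. Using $x_j(a)=Aa^{k_0}+\mathcal{O}(a^{k_0+1})$ and $r_0=k|a_0|$ one gets $|x_j'(a_0)|\cdot r_0\sim k\,|x_j(a_0)|$, which combined with the equivalence above yields
$$\diam \xi_{n_j,j}(D)\gtrsim k\cdot|Df_{a_0}^{n_j}|\cdot|x_j(a_0)|\sim k\cdot|\xi_{n_j,j}(a_0)-\mu_{n_j,j}(a_0)|\geq k S/C,$$
a positive lower bound independent of the specific Whitney disk; redefining this constant as the new $S$ completes the proof. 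The hard part will be the stopping-time bookkeeping: the thresholds $\delta''<S\ll\delta'$ must be tuned so that at time $n_j$ the separation lies in the admissible window $[\delta'',\delta']$ simultaneously at every $a\in D$, absorbing both the one-step overshoot (a factor $M$) at the stopping instant and the cross-disk comparability factor $C_2$ from Lemma~\ref{sdort2} and the Whitney geometry of $x_j$.
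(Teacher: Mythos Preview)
Your proposal is correct and follows essentially the same route as the paper: use the phase--parameter comparison (Lemmas~\ref{sdort}, \ref{pp}, \ref{sdort2}, \ref{est}) together with the Whitney geometry of $x_j$ to convert exponential growth of $|Df_{a_0}^{n}|$ into a definite lower bound on $\diam\xi_{n_j,j}(D)$. The paper's proof is much terser---it neither defines $n_j$ explicitly nor verifies that the separation window $[\delta'',\delta']$ is respected---whereas you supply the stopping-time bookkeeping and the one-step overshoot control via $M$; this added care is welcome, though note that your clause ``choose $0<\delta''<S$ so small that $C_2^2 M S\leq\delta'$'' should read as a smallness condition on $S$ (with $\delta''$ then taken as $S/C_2$), since the displayed inequality does not involve $\delta''$.
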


\begin{proof}
By Lemma \ref{est} there exists $\kappa$ such that $\xi_{n_j,j}$ has strong distortion \eqref{stdi} up to time $n_j$ for all parameters in any $\kappa$-Whitney disk. So we see by \eqref{stdi} and Lemma \ref{sdort} that there is a constant $C_0>0$ such that
$$\diam\xi_{n_j,j}(D(a_0, r_0))\geq |\xi'_{n_j,j}(a_0)|r_0\geq C_0 \left|Df_{a_0}^{n_j}(v_j(a_0))\right||x'_j(a_0)|r_0.$$
As $x_j$ has bounded distortion in Whitney disks and the derivative of $f^{n_j}_{a_0}$ grows exponentially as long as it stays within $\mathcal{O}$, we can find a number $S$, say $S=\delta'/2$, such that $\diam\xi_{n_j,j}(D(a_0, r_0))\geq S$.
\end{proof}

\subsection{Proof of Theorem \ref{main}}\label{s33}

We have seen that a small disk around the starting Misiurewicz map will finally grow to a definite size under the map $\xi_{n,j}$ due to the expansion on the  post-singular set of the starting map. After this, any given compact set will be covered within finitely many further iterates. The following result tells us this basic fact. See \cite[Lemma 2.2]{baker15}.

\begin{lemma}[Blowup property of Julia sets]\label{blowup}
Let $f$ be a transcendental entire function. Let $U$ be a neighbourhood of $z\in\j(f)$. Then for any compact set $K$ not containing an exceptional point of $f$ there exist $n(K)\in\mathbb{N}$ such that $f^{n}(U)\supset K$ for all $n\geq n(K)$.
\end{lemma}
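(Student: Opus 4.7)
The plan is to reduce the problem to iterating a single expanding disk around a repelling periodic point inside $U$, and then to upgrade a union-cover statement into a pointwise-eventual-cover statement by handling residues modulo the period.

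First I would exploit the density of repelling periodic points in $\j(f)$ to pick a repelling periodic point $p\in U$ of some period $q$. Linearising $f^q$ at $p$ yields a small closed disk $D\subset U$ centred at $p$ on which $f^q$ is injective and strictly expanding, so that $D\subsetneq f^q(D)$. Consequently the sets $V_m:=f^{qm}(D)$ form a monotonically increasing sequence in $m$.

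Next, since $p\in\j(f)\cap D$, the family $\{f^{qm}\}_{m\geq 0}$ is not normal on $D$, and Montel's theorem forces the open union $V:=\bigcup_m V_m$ to omit at most one value in $\c$. For a transcendental entire function the only candidate for such an omitted value is the Picard exceptional value, which lies in the exceptional set $E$ of $f$. Hence $V\supset\c\setminus E\supset K$, and compactness of $K$ together with monotonicity of $\{V_m\}$ gives an $M_0$ with $V_m\supset K$ for all $m\geq M_0$.

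To cover indices $n$ not divisible by $q$, for each $s\in\{0,1,\ldots,q-1\}$ I would observe that $f^{qm+s}(D)=f^s(V_m)$ is again increasing in $m$, with union $f^s(V)$. Using the classical fact that the Picard-exceptional set of $f^s$ coincides with that of $f$, the same Montel-based argument gives $f^s(V)\supset\c\setminus E\supset K$, and compactness produces $M_s$ with $f^{qm+s}(D)\supset K$ for all $m\geq M_s$. Setting $N:=q\max_s M_s+q$ and decomposing any $n\geq N$ as $n=qm+s$ with $m\geq M_s$, one obtains $f^n(U)\supset f^n(D)\supset K$, which is the desired conclusion. The main obstacle I anticipate is the residue bookkeeping of this last step: it is not deep but does require carefully invoking the coincidence of Picard-exceptional sets across all iterates of $f$, which is what guarantees that passing from $V$ to $f^s(V)$ does not enlarge the omitted locus beyond $E$.
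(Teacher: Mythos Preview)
The paper does not prove this lemma itself; it quotes it from Baker \cite[Lemma~2.2]{baker15} and uses it as a black box. Your argument is the standard one and is essentially correct. Two points deserve tightening. First, take $D$ to be an \emph{open} disk so that the $V_m=f^{qm}(D)$ are open and increasing, and the compactness extraction of $M_0$ (and later of each $M_s$) is immediate. Second, in your step~4 the phrase ``the Picard exceptional value, which lies in the exceptional set $E$ of $f$'' is imprecise: for transcendental entire $f$ a value taken only finitely often need not have finite backward orbit (consider $f(z)=(z-1)e^z$ and the value $0$). The clean route is to note that $D\subset f^q(D)$ gives $f^q(V)=V$, so any omitted value $w$ satisfies $(f^q)^{-1}(w)\subset\{w\}$, whence $w\in E(f^q)=E(f)$. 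The same $f^q$-forward-invariance holds for each $f^s(V)$, so in the residue step the exceptional set you should invoke is again that of $f^q$, not of $f^s$; you also need the non-normality of the subfamily $\{f^{qm+s}\}_m$ at $p$, which follows because $f^{qm+s}(p)=f^s(p)$ is constant in $m$ while $|(f^{qm+s})'(p)|=|(f^s)'(p)|\cdot|(f^q)'(p)|^m\to\infty$. With these small adjustments your proof is complete.
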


By an exceptional point it means that the backward orbit of this point is finite. Note that a transcendental entire function has at most one exceptional point.

Let $f$ be a Speiser function as in Theorem \ref{main} and let $f_0$ be Misiurewicz. Per definition, $f_0$ is post-singularly bounded. So there exists $R>0$ such that $\p(f_0)\subset D(0,R)$. Moreover, there are at most finitely many critical points in $D(0,R)$. We put
$$B_{R}:=\left(\bigcup_{c\in\ct(f_0)\cap D(0,R)} D\left(c,\frac{1}{R}\right)\right)\cup\left(\c\setminus \overline{D}\left(0,R\right) \right).$$
We assume that $D(c,1/R)$ is contained in $D(0,R)$. This is, however, not a strong assumption as one can achieve this by considering $D(c,1/(CR))$ instead of $D(c,1/R)$ in the definition of $B_R$ for some large constant $C>0$. By doing this we can also have that disks appearing in $B_R$ have disjoint closures. We will always assume these in the following discussions. Therefore, the complement of $B_R$ is a disk with finitely many small disks removed from its interior; i.e., a bounded multiply connected domain.

One should note that an entire function may have no critical points (i.e., they are locally univalent). Then in this case, being post-singularly bounded without parabolic cycles implies that such functions are Misiurewicz. Examples include, for instance, the exponential family and more generally Nevanlinna functions (i.e., entire functions with polynomial Schwarzian derivatives).

\smallskip
Recall that $\p_H(f_0)$ is defined in \eqref{trunca}. Since singular values in the Julia set of $f_0$ are non-recurrent (at least after finitely many iterates) to critical points in the Julia set and $f_0$ has only attracting cycles (if exist), there exists $R>0$ such that
\begin{equation}\label{R-mis}
\p_H(f_0)\cap B_{R}=\emptyset.
\end{equation}

Let $f\in\s$. Recall that $\m_f$ is the parameter space of $f$. Denote by $\mathit{Mis}_{f}$ the set of all Misiurewicz entire functions in $\m_f$. Now we say that an entire function $g\in\m_f$ is $R$-\emph{Misiurewicz} if there exists $R>0$ such that \eqref{R-mis} holds. Now we define
$$\mathit{Mis}_{f}^{R}:=\left\{g\in\m_f:\, g\text{~is $R$-Misiurewicz}\,\right\}.$$
So if $R_1<R_2$ we have $\mathit{Mis}_{f}^{R_1}\subset \mathit{Mis}_{f}^{R_2}$. So we see that, for $n\in\mathbb{N}$,
$$\mathit{Mis}_{f}=\bigcup_{n>0}\mathit{Mis}_{f}^{n}.$$

 To prove Theorem \ref{main} it suffices to show that for every $n$ the set of $n$-Misiurewicz functions has Lebesgue density less than one at $f$. It is then necessary to show that a definite portion in any parameter disk around the starting  $n$-Misiurewicz function consists of functions which are not $n$-Misiurewicz. We will still use $R$ instead of $n$ in the following discussions.
 
 \smallskip
 Now let $f_0$ be a $R$-Misiurewicz map so that \eqref{R-mis} holds. By Lemma \ref{scalelarge}, for each $j\in\I$ and for all sufficiently small $r>0$ there exist $S>0$ (large scale) and $n_j\in\mathbb{N}$ such that $\xi_{n_j,j}(D(a_0, r_0))$ has diameter at least $S$, where $D(a_0,r_0)\subset D(0,r)$ is $\kappa$-Whitney disk. Moreover, by the bounded distortion in Lemma \ref{est} we have $\xi_{n_j,j}(D(a_0, r_0))\supset D(\xi_{n_j,j}(a_0), S/2)$.

\bigskip
To proceed, we have to distinguish whether $D(\xi_{n_j,j}(a_0), S/2)$ is contained in the Fatou set $\f(f_0)$ of $f_0$ for each $j$. For this purpose, recall that we are only considering singular values in the Julia set of $f_0$; see Section 3.1. Let $\H$ denote the index set of all $v_j(0)$ defined in \eqref{kj}. This set either ranges over all singular values of $f_0$ or omits some singular values which must lie in the attracting basins of $f_0$ by Proposition \ref{fatoucomponent}. Then we have that $\I\subset \H$.

\medskip
First we assume that $D(\xi_{n_j,j}(a_0), S/2)\subset \f(f_0)$ for all $j\in \I$. Note that the Fatou set $\f(f_0)$ consists of only attracting basins and under perturbations these attracting basins are persistent. By making perturbations sufficiently small we can have that $D(\xi_{n_j,j}(a_0), S/4)\subset \f(f_a)$ for all $a\in D(0,r)$. Let $P$ denote the set of parameters $a$ in $D(a_0, r_0)$ with this property. Then for each $a\in P$, the corresponding singular values $v_j(a)$ of $f_a$ for $j\in \I$ fall into attracting cycles. For those indices $i$ in $\H\setminus\I$, they belong to the Julia set of $f_a$ and fall into a hyperbolic set, as for such indices the function $x_i(a)$ is identically equal to zero (and thus $v_i(a)$ have same behaviours as the corresponding $v_i(0)$); see Lemma \ref{indi}. In other words, each $f_a$ with $a\in P$ is a Misiurewicz map unless $\I=\H$ in which case $f_a$ is hyperbolic. If the latter is not happening, then one can find a full dimensional ball in $B(0,r)$ such that each parameter $a$ in this ball has the property that all singular values in the Julia set $\j(f_a)$ satisfy that $x_j(a)\equiv 0$. This contradicts with Lemma \ref{indi}. So there exists a positive constant $0<c<1$ such that
$$\meas\{a\in D(a_0, r_0):\,f_a~\text{is hyperbolic}\,\}\geq c\, \meas D(a_0, r_0).$$

\medskip
Now we assume that $D(\xi_{n_j,j}(a_0), S/2)\subset \f(f_0)$ for some (but not all) $j\in \I$. Denote by $\I'$ the set of such indices. Then for each $j\in\I\setminus\I'$, we see that $D(\xi_{n_j,j}(a_0), S/2)\cap\j(f_0)\neq\emptyset$.

\medskip
 We consider the set
 $$V_{R}:=\overline{B}_{2R}\setminus B_{5R}.$$
 By definition, this is a compact set consisting of finitely many annuli. So Lemma \ref{blowup} tells us that there exists some integer $N$ such that
$$f_{0}^{n}(D(\xi_{n_j,j}(a_0), S/2))\supset V_{R}$$
for all $n\geq N$. By taking perturbations sufficiently small (i.e., $r$ small) we can have that for all $a\in D(a_0, r_0)$,
$$f_{a}^{n}(D(\xi_{n_j,j}(a_0), S/2))\supset V'_{R}$$
for all $n\geq N$, where
$$V'_R=\overline{B}_{3R}\setminus B_{4R}.$$
Since $N$ depends only on $f_0$ and $S$ there exists a positive number $c'>0$ such that
$$\meas f_{a}^{-N}(V'_R)\geq c'\meas D(\xi_{n_j,j}(a_0), S/2),$$
where the inverse branch is chosen appropriately. For simplicity, put $V=f_{a}^{-N}(V'_R)$. So $V\subset  \xi_{n_j,j}(D(a_0, r_0))$. We also get that
$$\meas V\geq c''\meas \xi_{n_j,j}(D(a_0, r_0))$$
for some constant $c''=c''(c')>0$. Now by Lemma \ref{est} there exists a positive number $c>0$ such that
\begin{equation}\label{depo}
\meas \xi_{n_j,j}^{-1}(V)\geq c \meas D(a_0, r_0).
\end{equation}

We claim that parameters in $\xi_{n_j,j}^{-1}(V)$ are not $R$-Misiurewicz. By construction, $\xi_{n_j+N,j}(a)\in V'_R$ for $a\in \xi_{n_j,j}^{-1}(V)$, which means that $f_a$ cannot be $R$-Misiurewicz. Now \eqref{depo} says that a definite portion in $D(0,r)$ do not correspond to $R$-Misiurewicz maps. The theorem is thus proved.

\section{Perturbations of Misiurewicz maps}

One of the central problems in real and complex dynamics is whether hyperbolic dynamics are dense (Fatou's conjecture). This is still open in the complex setting. For transcendental functions, Fatou's conjecture is believed to be true for Speiser functions. More precisely, let $f\in\s$. Assume that $f$ is non-hyperbolic. Can one find hyperbolic maps in $\m_f$ which are arbitrarily close to $f$? In this section, by using results from previous section we show that this is true for Misiurewicz entire functions. 

Approximating Misiurewicz maps is easier if it has only critical values (regardless whether the Fatou set is empty or not) as long as we have Lemma \ref{scalelarge} at hand. However, transcendental functions may not have any critical values. For instance, the map $\lambda e^z$ or more generally Nevanlinna functions (i.e., entire functions with polynomial Schwarzian derivatives). If this happens, a different argument has to be used. To overcome this we shall use function-theoretic behaviours of entire functions around points taking maximum modulus. This is usually called the \emph{Wiman-Valiron theory}. Classically the theory deals with transcendental entire functions; see, for instance, \cite{hayman2}. Recently a generalization was given by Bergweiler, Rippon and Stallard for meromorphic functions with direct or logarithmic tracts \cite{bergweiler3}. We will only use part of their theory for logarithmic tracts.

\subsection{Wiman-Valiron theory and growth in tracts}
We collect here some results in the Wiman-Valiron theory from \cite{bergweiler3}.

\begin{definition}\label{logtract}
Let $T$ be an unbounded, simply connected domain in $\c$ whose boundary consists of piecewise smooth curves. Suppose that $\c\setminus T$ is unbounded. Let $f$ be a complex-valued function with domain of definition containing $\overline{T}$. Then we say that $T$ is a logarithmic tract of $f$ if $f$ is holomorphic in $T$, continuous in $\overline{T}$, and moreover, there exists $R>0$ such that $|f(z)|=R$ for $z\in\partial T$ and $|f(z)|>R$ for $z\in T$, and $f: T\to \{z: |z|>R\}$ is a universal covering.
\end{definition}

The above defines a logarithmic tract over $\infty$ for $f$. Similarly one can define logarithmic tracts over any finite value $s$ by considering $1/(f-s)$.

For us, it is worth mentioning that every asymptotic value of a Speiser function has at least one corresponding logarithmic tract. More precisely, suppose that $f\in\s$ and $s$ is an asymptotic value of $f$. Then by taking a suitable $r>0$, each unbounded component $T$ of $f^{-1}(D(s,r))$ is a logarithmic tract of $f$. We will denote by $T_f(s)$ the logarithmic tract of $f$ corresponding to the singular value $s$.

Using notations in the above Definition \ref{logtract}, put
$$v(z)=
\begin{cases}
~\,\log\dfrac{|f(z)|}{R}~&\text{if}~z\in T,\\
~\,0~&\text{if}~z\not\in T,
\end{cases}
$$
and
\begin{equation}\label{deff}
B(r,v)=\max_{|z|=r} v(z)~\quad\text{and}\quad~a(r,v)=r B'(r,v).
\end{equation}
We refer to \cite[Section 2]{bergweiler3} for more details concerning properties of $a(r,v)$ and $B(r,v)$. We also use $M(r,f)=\max_{|z|=r}|f(z)|$. The following result is part of the Wiman-Valiron theory.

\begin{theorem}\label{brs}
Let $T$ be a logarithmic tract of $f$ and $\tau>1/2$. Let $z_r$ be such that $|z_r|=r$ and $v(z_r)=B(r,v)$. Then there exists a set $F\subset [1,\infty)$ such that $\int_F dr/r<\infty$ and such that if $r\in [1,\infty)\setminus F$, then $D(z_r, r/a(r,v)^\tau)\subset T$. Moreover,
\begin{equation}\label{brs1}
f(z)\sim \left(\frac{z}{z_r}\right)^{a(r,v)}f(z_r)~\quad\text{for}\quad~ z\in D\left(z_r, \frac{r}{a(r,v)^\tau}\right),
\end{equation}
and
\begin{equation}\label{brs2}
f'(z)\sim \frac{a(r,v)}{z}\left(\frac{z}{z_r}\right)^{a(r,v)}f(z_r)~\quad\text{for}\quad~ z\in D\left(z_r, \frac{r}{a(r,v)^\tau}\right).
\end{equation}
\end{theorem}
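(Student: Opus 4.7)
The plan is to reduce the statement to a conformal-mapping argument that replaces the power-series expansion used in the classical Wiman--Valiron theory on $\c$. Since $T$ is simply connected and $f\colon T\to\{w:|w|>R\}$ is a universal covering, the map $g:=\log(f/R)$ has a single-valued holomorphic branch on $T$, and this branch is a conformal isomorphism from $T$ onto the right half-plane $\{w:\re w>0\}$. Consequently $v=\re g$ inside $T$, $v$ is positive and harmonic there, vanishes on $\partial T$, and $v(z_r)=B(r,v)$.

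The first step is to identify $g'(z_r)$. Writing $z_r=re^{i\theta_r}$, the conditions that $v$ attains its maximum over $|z|=r$ at $z_r$ (so $\partial_{\theta}v(z_r)=0$) and that its radial derivative there equals $B'(r,v)$ give, respectively, $\Im(z_r g'(z_r))=0$ and $\re(z_r g'(z_r))=rB'(r,v)=a(r,v)$, so that
\begin{equation}
z_r\,g'(z_r)=a(r,v).
\end{equation}
The second step is to produce the exceptional set $F$ together with the geometric containment. A Borel--Nevanlinna type lemma applied to the convex majorant of $B(r,v)$ as a function of $\log r$ shows that off a set $F\subset[1,\infty)$ with $\int_F dr/r<\infty$ the quantity $a(r,v)$ is comparable to the local logarithmic growth rate of $B(r,v)$. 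Combining this regularity with a harmonic-measure estimate for $v$ inside $T$ (essentially the Ahlfors distortion principle, transported to $T$ via the conformal map $g$) yields that for $r\in[1,\infty)\setminus F$ the disc $D\bigl(z_r,\,r/a(r,v)^{\tau}\bigr)$ lies in $T$ and that $v$ is comparable to $B(r,v)$ on a slightly larger concentric disc also lying inside $T$.

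The third step is the Taylor expansion. Cauchy's estimates applied to $g$ on the slightly larger disc (using the previous bound on $v$ to bound $|g|$) give control of $g''$, so on $D(z_r,r/a(r,v)^{\tau})$,
\begin{equation}
g(z)-g(z_r)=g'(z_r)(z-z_r)+O\bigl(|z-z_r|^{2}\sup|g''|\bigr).
\end{equation}
Substituting $g'(z_r)=a(r,v)/z_r$ and using $\log(z/z_r)=(z-z_r)/z_r+O\bigl(|(z-z_r)/z_r|^{2}\bigr)$, the difference $g'(z_r)(z-z_r)-a(r,v)\log(z/z_r)$ is of order $a(r,v)\cdot a(r,v)^{-2\tau}=a(r,v)^{1-2\tau}=o(1)$, precisely because $\tau>1/2$; a parallel computation bounds the Cauchy error by the same device. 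Combining these gives $g(z)-g(z_r)=a(r,v)\log(z/z_r)+o(1)$ uniformly on the disc, and exponentiating via $f=Re^{g}$ yields \eqref{brs1}. The derivative formula \eqref{brs2} then follows from $f'=f\,g'$ together with $g'(z)=a(r,v)/z+o(a(r,v)/|z|)$ on the same disc, obtained by one more application of Cauchy's estimates to the expansion of $g$.

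The main obstacle is the second step. The size of the tract near $z_r$ has to be estimated from below purely in terms of $a(r,v)$, which is an a-priori quantity that need not behave regularly on all of $[1,\infty)$; simultaneously controlling the geometry of $T$ near $z_r$ and the oscillation of $v$ there, off a logarithmically small exceptional set, is the technical core of the Wiman--Valiron-type theory for logarithmic tracts. It is also exactly the place where the hypothesis $\tau>1/2$ becomes sharp: smaller $\tau$ makes the Taylor remainder uncontrollable, while larger discs would protrude out of $T$ for generic $r$.
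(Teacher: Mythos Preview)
The paper does not prove this theorem. It is stated in Section~4.1 as background, with the preface ``We collect here some results in the Wiman--Valiron theory from \cite{bergweiler3}'', and no proof is given; the result is simply quoted from Bergweiler, Rippon and Stallard. So there is no ``paper's own proof'' to compare against.

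That said, your sketch is a faithful outline of the strategy actually used in \cite{bergweiler3}. The reduction via $g=\log(f/R)$ mapping $T$ conformally onto the right half-plane, the identification $z_r g'(z_r)=a(r,v)$ from the tangential and radial derivatives of $v$ at the maximum point, and the Taylor expansion of $g$ on the Wiman--Valiron disc followed by exponentiation are all correct and are the backbone of the argument there. You also correctly isolate where the real work lies: the second step, establishing that $D(z_r,r/a(r,v)^{\tau})\subset T$ off an exceptional set of finite logarithmic measure, together with the needed two-sided control of $v$ on a slightly larger disc. Your description of this step (``a Borel--Nevanlinna type lemma'' combined with ``a harmonic-measure estimate \dots\ essentially the Ahlfors distortion principle'') is in the right spirit but is not yet a proof; in \cite{bergweiler3} this is carried out through a sequence of growth lemmas for $a(r,v)$ and a careful analysis of the conformal map $g$ near $z_r$, and filling in those details is substantial. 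If your aim was only to indicate why the theorem is plausible and where $\tau>1/2$ enters, the sketch succeeds; if it was meant as a self-contained proof, the second step needs to be worked out in full.
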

For simplicity, we call $D(z_r, r/a(r,v)^\tau)$ occurring in the above theorem \emph{Wiman-Valiron disks} (for short, WV-disks). We shall use this result repeatedly to find a (finite) sequence of points $z_n$ in the logarithmic tracts $T_{f}(\infty)$ such that the WV-disk around $z_{n-1}$ will cover $z_{n}$ under $f$. In this way, we want to achieve that $|Df^{n}(z_0)|$ is dominated by the position of $z_n$. When this happens, we turn our attention to the logarithmic tract $T_{f}(s)$ over some finite asymptotic value $s$. Then the WV-disk at $z_n$ will cover some WV-disk around a point $z_{n+1}\in T_f(s)$ under the map $f$. Therefore, $f(z_{n+1})$ will be very close to $s$. Along this procedure we produce a sequence which goes far away from the origin and then return to some asymptotic value in $\c$.

Suppose that \eqref{brs2} in the above lemma holds. Then we see that $|f'(z)|$ for $z$ in the WV-disk around $z_r$ is dominated by $|f(z_r)|$. The term $|a(r,v)/r|$ is small compared with $|f(z_r)|$. This can be seen by \cite[Lemma 6.10]{bergweiler3} which says that
$$a(r,v)\leq B(r,v)^{1+\varepsilon}$$
for some $\varepsilon>0$, except for a set $F\subset [1,\infty)$ such that $\int_F dr/r<\infty$. By \eqref{deff} we also have
$$B(r,v)\leq\log M(r,f).$$
Therefore, combining all the above we have
\begin{equation}\label{avr}
\left|\frac{a(r,v)}{z} \right|\leq \frac{(\log M(r,f))^{1+\varepsilon}}{r},
\end{equation}
except for a set $F\subset [1,\infty)$ such that $\int_F dr/r<\infty$.

A consequence of the above theorem is the following result; see \cite[Theorem 2.3]{bergweiler3}.

\begin{lemma}\label{brs3}
For each $\beta>1$ there exists $\alpha>0$ such that if $f, T, v, z_r$ and $F$ are as in the above theorem and if $r\not\in F$ is sufficiently large, then
$$f\left(D\left(z_r, \frac{\alpha r}{a(r,v)}\right) \right)\supset A\left(\frac{|f(z_r)|}{\beta}, \beta|f(z_r)| \right).$$
\end{lemma}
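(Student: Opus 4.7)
The plan is to use the asymptotic \eqref{brs1} from Theorem \ref{brs} to replace $f$ on a suitable disk by the monomial $z\mapsto(z/z_r)^{a(r,v)}f(z_r)$, then reduce the question to understanding the image of the complex exponential on a round disk of radius $\alpha$. First, fix any $\tau\in(1/2,1)$. Since $r\notin F$ is large we have $a(r,v)\to\infty$, so $a(r,v)^{1-\tau}\to\infty$ and in particular for any prescribed $\alpha>0$ the containment
\[
D\!\left(z_r,\frac{\alpha r}{a(r,v)}\right)\subset D\!\left(z_r,\frac{r}{a(r,v)^{\tau}}\right)
\]
holds for all sufficiently large $r\notin F$, placing our disk inside the WV-disk where \eqref{brs1} applies.

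Next I would parameterize by $z=z_r\bigl(1+w/a(r,v)\bigr)$ for $|w|<\alpha$. Using the expansion $\log(1+\zeta)=\zeta+O(\zeta^2)$,
\[
\left(\frac{z}{z_r}\right)^{a(r,v)}=\exp\!\left(a(r,v)\log\!\left(1+\frac{w}{a(r,v)}\right)\right)=e^{w}\bigl(1+O(1/a(r,v))\bigr),
\]
uniformly for $|w|\leq\alpha$. Combined with \eqref{brs1}, this yields
\[
\frac{f(z)}{f(z_r)}=e^{w}\bigl(1+o(1)\bigr)\qquad\text{as }r\to\infty,\ r\notin F,
\]
uniformly on $|w|\leq\alpha$. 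Now I would choose $\alpha$ depending only on $\beta$ so that $\{e^w:|w|<\alpha\}$ covers the closed annulus $\overline{A(1/\beta,\beta)}$: every target $\rho e^{i\theta}$ with $\rho\in[1/\beta,\beta]$ and $\theta\in(-\pi,\pi]$ has preimage $w=\log\rho+i\theta$ of modulus at most $\sqrt{(\log\beta)^2+\pi^2}$, so any $\alpha>\sqrt{(\log\beta)^2+\pi^2}$ works.

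Finally, I would pass from $e^w$ back to $f$ by a Rouché/degree argument. Pick a compact subannulus $K\Subset A(1/\beta,\beta)$ and shrink $K$ or enlarge $\alpha$ slightly so that for each $\zeta\in K$ a solution $w_0$ of $e^w=\zeta$ lies well inside $\{|w|<\alpha\}$ and $|e^w-\zeta|$ is bounded below on some small circle around $w_0$ uniformly in $\zeta\in K$. By the uniform convergence just established, for $r$ large Rouché's theorem supplies a zero of $f(z_r(1+w/a(r,v)))-\zeta f(z_r)$ inside the same circle, hence $\zeta f(z_r)\in f\!\bigl(D(z_r,\alpha r/a(r,v))\bigr)$. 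Exhausting $A(|f(z_r)|/\beta,\beta|f(z_r)|)$ by such compact subannuli finishes the proof. The only delicate point is verifying that the $1+o(1)$ in \eqref{brs1} is uniform over the whole disk $|w|\leq\alpha$ as $r\to\infty$ outside $F$; once that uniformity is extracted from the statement of Theorem \ref{brs}, the remaining ingredients (power-series expansion of $\log$ and a boundary degree count) are routine.
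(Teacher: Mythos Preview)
Your argument is correct and is essentially the standard derivation of this covering statement from the Wiman--Valiron asymptotic \eqref{brs1}. The paper itself does not prove Lemma~\ref{brs3}: it simply quotes it as \cite[Theorem~2.3]{bergweiler3}, so there is no ``paper's own proof'' to compare against. Your approach---substituting $z=z_r(1+w/a(r,v))$, reducing to the model map $w\mapsto e^w$, choosing $\alpha>\sqrt{(\log\beta)^2+\pi^2}$, and closing with a Rouch\'e argument---is exactly how one proves the cited theorem in \cite{bergweiler3}, so you have reconstructed the source proof rather than found an alternative route. The one point you flag (uniformity of the $o(1)$ in \eqref{brs1} over the WV-disk) is indeed part of the content of Theorem~\ref{brs} as stated in \cite{bergweiler3}, and the fact that $a(r,v)\to\infty$ off $F$ (needed for both the disk containment and the $e^w$ approximation) also comes from that reference.
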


\bigskip
To show that a Misiurewicz map with empty Fatou set can be perturbed to another Misiurewicz map with non-empty Fatou set, we will also need some information about the growth of the maximum modulus inside logarithmic tracts. This is essentially the well known Denjoy-Carleman-Ahlfors theorem \cite{goldbergmero}. We state it as a lemma for our convenience; see, for instance, the discussion in \cite[Section 1]{aspenberg1}.

\begin{lemma}\label{growintract}
Let $T$ be a logarithmic tract of $f$ (over $\infty$). Put $M_{T}(r,f)=\max_{z\in T}|f(z)|$. Then there exist $r_0>0$ and $C>0$ such that
$$\log\log M_T(r,f)\geq \frac{1}{2}\log r - C$$
for $r\geq r_0$.
\end{lemma}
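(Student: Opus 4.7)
The plan is to deduce this as a special case of the Denjoy--Carleman--Ahlfors theorem for a single direct tract, via the classical Carleman--Ahlfors distortion estimate for subharmonic functions that vanish on the boundary of a simply connected domain. In our one-tract setting, that estimate produces the order-$1/2$ lower bound on growth.

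To start, I would pass from $f$ to its natural subharmonic majorant associated with $T$. Set
$$u(z) = \begin{cases} \log |f(z)| - \log R & \text{if } z \in T, \\ 0 & \text{if } z \notin T. \end{cases}$$
By Definition \ref{logtract}, $|f(z)| = R$ on $\partial T$, so $u$ is continuous on $\c$, vanishes on $\c \setminus T$, is harmonic in $T$, and is non-negative everywhere, i.e., subharmonic on $\c$. In particular, with the notation $B(r,u) = \max_{|z|=r} u(z)$, we have $B(r,u) = \log M_T(r,f) - \log R$ for all sufficiently large $r$, so controlling $B(r,u)$ from below controls $\log\log M_T(r,f)$.

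Next I would apply the Carleman--Ahlfors distortion inequality (this is the analytic core of the Denjoy--Carleman--Ahlfors theorem; see \cite{goldbergmero} and the discussion in \cite[Section 1]{aspenberg1}). Let $\theta(t)$ denote the angular Lebesgue measure of $T \cap \{|z|=t\}$; since $T \subsetneq \c$ is simply connected and $\c \setminus T$ is unbounded by hypothesis, the trivial bound $\theta(t) \leq 2\pi$ holds for all $t$. The Carleman--Ahlfors estimate then gives, for fixed $r_1$ large and all $r \geq r_1$,
\begin{equation}
\log B(r,u) \;\geq\; \pi \int_{r_1}^{r} \frac{dt}{t\,\theta(t)} \;-\; C_1 \;\geq\; \tfrac{1}{2}\log r \;-\; C_2,
\end{equation}
the second inequality being immediate from $\theta(t) \leq 2\pi$.

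Translating back, for $r$ large we have $\log M_T(r,f) \geq 2\log R$, and hence $\log B(r,u) = \log\log M_T(r,f) + O(1)$, so we conclude $\log\log M_T(r,f) \geq \tfrac{1}{2} \log r - C$ with a possibly larger constant $C$ and for all $r \geq r_0$. The only genuine obstacle in this plan is the Carleman--Ahlfors distortion inequality itself, but it is classical and well documented in the references already cited in the paper, so in writing the proof I would simply invoke it rather than reproduce its (harmonic-measure or conformal-modulus) derivation.
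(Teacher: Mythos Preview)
Your proposal is correct and follows exactly the route the paper indicates: the paper does not actually prove this lemma but simply records it as a consequence of the Denjoy--Carleman--Ahlfors theorem, citing \cite{goldbergmero} and \cite[Section~1]{aspenberg1}. You have filled in the standard derivation---introducing the nonnegative subharmonic function $u$ (the paper's $v$), invoking the Carleman--Ahlfors distortion inequality with the trivial bound $\theta(t)\le 2\pi$, and translating back---so your argument is a fleshed-out version of precisely what the paper points to.
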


\subsection{An inducing scheme}

We are now in a position to prove Theorem \ref{mishyp}. The proof is constructive by using an inducing scheme. Roughly speaking, our starting Misiurewicz map will induce another nearby Misiurewicz map by pushing one singular value from the Julia set to the Fatou set. Then we start with this new Misiurewicz map and conduct same analysis as before to get another Misiurewicz map. Inductively we will finally push all singular values to the Fatou set and thus get a hyperbolic map.

\medskip
Let $f$ be a Misiurewicz map. We consider the function 
$$\xi_{n,j}(a)=f_{a}^{n}(v_j(a))$$
as defined in \eqref{pp} for each $j$ around our starting Misiurewicz map $f_0:=f$. Then by Lemma \ref{scalelarge}, for each $j$ there exists $n_j\in\mathbb{N}$ such that $\xi_{n_j,j}(D(a_0, r_0))$ reaches large scale $S$ for all $\kappa$-Whitney disks. Recall that $D(a_0,r_0)\subset D(0,r)$.

\smallskip
We first prove the following result, which is Theorem \ref{mishyp} under the additional condition that the Fatou set is not empty.

\begin{theorem}\label{thm42}
Let $f\in\s$ be a Misiurewicz entire function for which $\j(f)\neq\c$. Then there are hyperbolic maps in $\m_f$ arbitrarily close to $f$.
\end{theorem}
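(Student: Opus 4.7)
The plan is an inducing scheme: starting from the Misiurewicz map $f=f_0$ with $\j(f_0)\neq\c$, produce a nearby parameter $a^*\in\m_f$ at which one singular value has been pushed out of the Julia set into an attracting basin while the remaining singular values in the Julia set stay locked on a persistent hyperbolic set. The resulting map $f_{a^*}$ is either hyperbolic or Misiurewicz with strictly fewer singular values in its Julia set, and the procedure is iterated. Since $\#\jing(f^{-1})$ is finite and strictly decreases at each round, after finitely many rounds we arrive at a hyperbolic map arbitrarily close to $f$. The base case $\jing(f^{-1})=\emptyset$ is immediate from Proposition \ref{fatoucomponent}.

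For one round, pick $j\in\I$ (nonempty by Lemma \ref{indi}). To guarantee that every singular value other than $s_j$ stays locked on the hyperbolic set, restrict attention to the analytic subvariety $V:=\bigcap_{i\in\I\setminus\{j\}}\{x_i(a)=0\}$ of $B(0,r)$, which passes through $0$. Along $V$, each $v_i(a)$ with $i\in\I\setminus\{j\}$ coincides with $h_a(v_i(0))\in\Lambda_a$ by definition of $x_i$, so only $s_j$ is dynamically active; the indices $i\notin\I$ are trivially pinned by $x_i\equiv 0$. Choose $a_0\in V$ arbitrarily close to $0$ and a $k$-Whitney disk $D(a_0,r_0)\subset V\cap D(0,r)$ with $r$ small. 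The restricted family over $V$ remains a holomorphic family of Speiser maps through $f_0$, so Lemma \ref{scalelarge} applies and produces $n_j$ with $\xi_{n_j,j}(D(a_0,r_0))\supset D(\xi_{n_j,j}(a_0),S/2)$, a disk of definite size. Because $v_j(0)\in\Lambda_0\subset\j(f_0)$ and $\Lambda_0$ is forward invariant, $\xi_{n_j,j}(0)\in\j(f_0)$, and for $a_0$ small this Julia point lies inside $D(\xi_{n_j,j}(a_0),S/2)$.

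Since $\j(f_0)\neq\c$, Proposition \ref{fatoucomponent} furnishes an attracting basin $U$ of $f_0$; fix a closed disk $K\subset U$ disjoint from the (at most one) exceptional value. The blowup property (Lemma \ref{blowup}) yields $N$ with $f_0^N(D(\xi_{n_j,j}(a_0),S/2))\supset K$, and persistence of attracting cycles under small perturbations gives, for $r$ small and every $a\in D(a_0,r_0)$, a compact $K'$ close to $K$ in the persisting basin $U_a$ of $f_a$ with $f_a^N(D(\xi_{n_j,j}(a_0),S/2))\supset K'$. Combined with the surjectivity of $\xi_{n_j,j}$ on $D(a_0,r_0)$ and the distortion estimate of Lemma \ref{est}, the set $E_j:=\{a\in D(a_0,r_0):\xi_{n_j+N,j}(a)\in K'\}$ is open and of positive Lebesgue measure; any $a^*\in E_j\subset V$ satisfies $s_j(a^*)\in\f(f_{a^*})$, whereas every other singular value of $f_{a^*}$ either lies in $\Lambda_{a^*}$ or was already contained in a Fatou basin. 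Iterating decreases $\#\jing(f^{-1})$ by at least one at each step and terminates at the required hyperbolic map.

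The principal obstacle is precisely the bookkeeping just carried out: ensuring that pushing $s_j$ into the Fatou set does not disturb the hyperbolic orbit of any other singular value in the Julia set. The resolution is the restriction to the subvariety $V$, which exploits the $(q+2)$-dimensional parameter space to enforce the analytic constraints $x_i\equiv 0$ for the non-active indices while preserving dimension at least $q+2-(|\I|-1)\geq 3$ to move $s_j$ transversally. Combined with the phase-parameter distortion of Lemmas \ref{sdort}--\ref{sdort2} and \ref{est}, the persistence of attracting basins under perturbation, and the blowup property, this produces the positive-measure open condition $E_j$ needed to close the inductive step.
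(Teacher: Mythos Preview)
Your overall plan—an inducing scheme that pushes one singular value at a time from the Julia set into an attracting basin and then restarts the analysis at the new map—matches the paper's. There is, however, a genuine gap in your implementation.

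The key step is your restriction to the subvariety $V=\bigcap_{i\in\I\setminus\{j\}}\{x_i=0\}$ in order to pin the other active indices. For Lemma~\ref{scalelarge} (and the distortion machinery of Section~3 feeding into it) to apply on a one-dimensional slice inside $V$, you need $x_j\not\equiv 0$ on that slice, hence $x_j\not\equiv 0$ on $V$. You never justify this, and in fact it can fail: it amounts to the set-theoretic inclusion $\{x_j=0\}\not\supset\bigcap_{i\in\I\setminus\{j\}}\{x_i=0\}$, which is by no means automatic. For a concrete obstruction, if all the zero-germs $\{x_i=0\}$ at $0$ coincide (nothing in the setup rules this out), then $V=W:=\bigcap_{i\in\I}\{x_i=0\}$ for \emph{every} choice of $j$, and $x_j\equiv 0$ on $V$. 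In that situation the large-scale growth of $\xi_{n_j,j}$ on your Whitney disk in $V$ simply does not occur, and the inductive step breaks down. The dimension count $\dim V\geq 3$ is irrelevant here; the issue is transversality of $x_j$ along $V$, which your argument assumes but does not establish.

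For comparison, the paper does not restrict to any subvariety. It works in an ordinary one-dimensional Whitney disk inside the full ball, where transversality for $j\in\I$ is already given by the definition of $\I$, and it avoids your blow-up step entirely: since $\j(f_0)\neq\c$ the Julia set is nowhere dense, so the large-scale image disk already contains an open portion of $\f(f_0)$, which persists for nearby parameters. The bookkeeping for the remaining indices is then absorbed into the iteration of the scheme at the new center rather than enforced by analytic constraints.
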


\begin{remark}
Stronger result actually holds in this case. For consistency we leave it to the next section.
\end{remark}

\begin{proof}[Proof of Theorem \ref{thm42}]

As before, we consider a small parameter ball $B(0,r)$ centred at $f_0:=f$ in the parameter space $\m_f$. By Lemma \ref{scalelarge}, for each singular value $s_j(0)$ in the Julia set of $f_0$ there exist $n_j$ and a number $0<\kappa<1$ such that the diameter of $\diam\xi_{n_j,j}(D(a_0,r_0))$ is at least $S$ for any $\kappa$-Whitney disk $D(a_0, r_0) \subset D(0,r) \subset B(0,r)$.

Let $\H$ be the set of indices ranging over singular values in the Julia set $\j(f_0)$, and $\I$ the index set for which $x_j(a) \not\equiv 0$; see \eqref{xj}, Lemma \ref{indi} and the discussion thereafter.

We first focus on indices in $\I$ and consider $\xi_{n_j, j}(D(a_0, r_0))=:D_j$ for $j\in\I$. By the bounded distortion ensured in Lemma \ref{est}, we see that $D_j$ contains a disk of radius $S/4$. We shall consider two situations depending on whether $D_j$ intersects with the Julia set $\j(f_{0})$.

\medskip

If $D_j\subset \f(f_0)$ holds for all $j\in \I$, then by considering $D'_j=\xi_{n_j,j}(D(a_0, r_0/4))$ if necessary, we can achieve the following property: Each $a\in D(a_0, r_0/4)$ satisfies that $v_j(a)\in\f(f_a)$. However, for all indices in $\H\setminus\I$ we still have that $v_j(a)$ falls into some hyperbolic set under iterates of $f_a$ as transversality fails for these singular values. So we see that each parameter in $D(a_0, r_0/4)$ is a Misiurewicz map if $\H\setminus\I\neq\emptyset$. But this means that one can find a component of quasiconformally conjugate Misiurewicz maps  in the full dimensional ball $B(0,r)$, which contradicts with the fact that Misiurewicz maps do not carry invariant line fields. Hence, $\H\setminus\I=\emptyset$. Therefore, every parameter in $D(a_0, r_0/4)$ is a hyperbolic map.

\smallskip

Now suppose that there is a subset $\I'$ of $\I$ such that $D_j\cap\j(f_0)\neq\emptyset$ for $j\in\I'$. Without loss of generality, assume that $\I'=\{1,\dots,m\}$ with $m=\#\I'$. Note that each set $\mathcal{A}_j:=\{a\in B(0,r): x_j(a)=0\}$ is an analytic set of codimension one for $j\in\I$. Then the intersection $\mathcal{A}:=\cap_{j\in\I'\setminus\{m\}}\mathcal{A}_j$ is an analytic set of codimension $m-1$. Since $x_j$ is identically equal to zero for $j\in\H\setminus\I$, this means that $x_m$ is not identically equal to zero in $\mathcal{A}$. Assume that the Whitney disk $D(a_0, r_0)\subset B(0,r)\cap \mathcal{A}$. Since Julia sets are nowhere dense, one can always find a nonempty compact subset $K \subset D_m \setminus \j(f_0)$. Hence $K \subset \f(f_0)$ and by making $r > 0$ sufficiently small, we see that this compact $K \subset \f(f_a)$, for all $a \in B(0,r)$. By compactness we conclude that there exists an $r > 0$ so that for every such disk $D_m$, there is a non-empty compact $K \subset D_m \setminus \j(f_0)$ for which $ K \subset \f(f_a)$, for $a \in B(0,r)$. So, one can achieve that for each $a\in \xi_{n_m, m}^{-1}(K)\cap \mathcal{A}$ we have $v_m(a)\in\f(f_a)$. Compared with the starting Misiurewicz map $f_0$, the map $f_a$ with $a\in \xi_{n_m, m}^{-1}(K)\cap \mathcal{A}$ has one less singular value in the Julia set $\j(f_a)$. Now starting with this new Misiurewicz map $f_a$, one can conduct the same analysis as above to "push" $v_j(a)$ with $j\in \I'$ to the Fatou sets. Finally we obtain a Misiurewicz map whose singular values $v_j(a)$ with $j\in\I$ are attracted by attracting basins. However, since $x_j$ is identically equal to zero for $j\in\H\setminus\I$, we get a contradiction similarly as above.

As the parameter ball $B(0,r)$ can be taken arbitrarily small, we see that the obtained hyperbolic map can be arbitrarily close to $f_0$.
\end{proof}

Now we deal with those functions with empty Fatou set.
\begin{theorem}\label{thm43}
Let $f\in\s$ be a Misiurewicz entire function for which $\j(f)=\c$. Then there is a Misiurewicz map $g\in\m_f$ arbitrarily close to $f$ for which $\j(g)\neq\c$.
\end{theorem}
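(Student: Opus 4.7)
My plan is to adapt the pushing scheme of Theorem \ref{thm42} to the empty-Fatou-set regime: since no Fatou component of $f_0:=f$ exists to pull back into, I will \emph{create} one by perturbation, using the Wiman--Valiron machinery of Section 4.1 to extend the reach of the parameter-to-phase map $\xi_{n,j}$ into the logarithmic tracts of $f_0$, and then exploiting the $(q+2)$-dimensional parameter freedom of $\m_f$ to realize a prescribed cycle as attracting.

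First, pick $j\in\I$ (nonempty by Lemma \ref{indi}) and apply Lemmas \ref{scalelarge} and \ref{est}: for any $k$-Whitney disk $D(a_0,r_0)\subset D(0,r)$, the image $\xi_{n_j,j}(D(a_0,r_0))$ contains a phase-space disk of radius $S/2$ with bounded distortion. Because $\j(f_0)=\c$ this disk lies entirely in the Julia set, but since $f_0\in\s$ is transcendental it possesses a logarithmic tract $T_{f_0}(\infty)$; by enlarging $n_j$ if needed we may assume a subdisk $D'$ of our large image lies deep in $T_{f_0}(\infty)$. Now apply Theorem \ref{brs} iteratively: chaining WV-disks whose maxima $|z_{r_k}|$ grow doubly exponentially (Lemma \ref{growintract}), after $m$ iterates $f_{a_0}^m$ a single final application of Lemma \ref{brs3} yields an image annulus that can be arranged to contain any prescribed compact set. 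Take this compact set to contain a repelling periodic point $p$ of $f_0$, dense in $\j(f_0)=\c$, together with its holomorphic continuation $p(a)$.

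Second, solve $f_a^{n_j+m+1}(v_j(a))=p(a)$ in a one-parameter family through $0$ by the implicit function theorem: by Lemma \ref{pp} composed with the product of \eqref{brs2} over the $m$ tract iterates, the parameter derivative of the left-hand side is doubly exponentially large and so dominates $p'(a)$. This produces $a^*$ for which $v_j(a^*)$ sits on the cycle of $p(a^*)$. Use the remaining $(q+1)$ complex parameter dimensions to move along a transverse direction on which the multiplier $\rho(a)=(f_a^N)'(p(a))$ is nonconstant (it must be, for otherwise all nearby perturbations would preserve the repelling character of the whole cycle of $p$, contradicting the Speiser parameter freedom granted by Eremenko--Lyubich's construction of $\m_f$); crossing $|\rho|=1$ then makes the cycle attracting while keeping $v_j$ in its immediate basin by continuity. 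The resulting $g:=f_{a^*}$ thus has $\j(g)\neq\c$.

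Third, $g$ is still Misiurewicz: indices $i\notin\I$ satisfy $x_i\equiv 0$ so $v_i(a^*)$ continues to track the hyperbolic-set image and stays non-recurrent, while for $i\in\I\setminus\{j\}$ Theorem \ref{main}, applied on the remaining parameter directions, guarantees that the subset of the parameter slice on which the $v_i(a^*)$ remain non-recurrent to $\jrit$ has positive measure, so a suitable $a^*$ can be chosen. The main obstacle is the distortion control across the $m$ tract iterates: Lemmas \ref{sdort} and \ref{sdort2} are formulated along the hyperbolic set $\Lambda_0$, and a quantitative extension through the nested WV-disks is needed, using \eqref{avr} to keep the cumulative distortion of $\xi_{n_j+m+1,j}$ bounded on Whitney parameter disks. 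A secondary difficulty is the nonconstancy of $\rho(a)$ in the Speiser parameter space, which I expect to handle by a rigidity argument in the spirit of Lemma \ref{indi}.
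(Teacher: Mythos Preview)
Your proposal has two genuine gaps that prevent it from going through.

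\textbf{The multiplier--crossing argument in step 2 is flawed.} You first solve $f_a^{\,n_j+m+1}(v_j(a))=p(a)$ to land $v_j$ on a repelling cycle, and then propose to move in a transverse parameter direction until the multiplier $\rho(a)$ crosses $|\rho|=1$. But these two conditions are coupled: once you leave the codimension-one locus on which $v_j$ hits the cycle exactly, $v_j(a)$ is merely \emph{near} $p(a)$, and at the moment you cross the indifferent circle there is no open attracting basin to catch it. Even just past the parabolic boundary the basin is tiny and of a shape you have no control over, so ``by continuity'' is not enough. Moreover, your justification that $\rho$ is nonconstant on $\m_f$ (``contradicting the Speiser parameter freedom'') is not an argument; nothing in the Eremenko--Lyubich construction rules out a periodic multiplier being locally constant on some slice, and you would need a separate rigidity statement here.

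\textbf{Step 3 misapplies Theorem \ref{main}.} You invoke it to conclude that the set of parameters for which the remaining $v_i$ stay non-recurrent to $\jrit$ has positive measure. But Theorem \ref{main} says exactly the opposite: the set of Misiurewicz parameters has Lebesgue measure \emph{zero}. It gives you no positive-measure supply of non-recurrent parameters to intersect with your locus from step 2.

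For comparison, the paper sidesteps both issues by never trying to convert a repelling cycle into an attracting one. If $f_0$ has a critical value, it uses Lemma \ref{blowup} to make that critical value land on its own critical point, producing a \emph{super}-attracting cycle outright. If $f_0$ has only asymptotic values, it chains WV-disks through $T_{f_0}(\infty)$ and then into a tract $T_{f_0}(s_j(0))$ over a finite asymptotic value; the Wiman--Valiron estimates \eqref{brs2}, \eqref{avr} and the growth bound of Lemma \ref{growintract} then yield a Schwarz-type contraction $f_a^{\,k_j+n_j+M+n+1}(\overline{D}(s_j(a),r_1))\subset D(s_j(a),r_1)$, creating an attracting basin directly. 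In both cases the construction lives in a single one-dimensional slice with $r$ small, so the other singular values automatically continue to shadow $\Lambda_0$ via the holomorphic motion---no appeal to Theorem \ref{main} is needed.
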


\begin{proof}
Depending on whether $f$ has critical values or not, we divide the proof into two cases. Again, let $\H$ be the set of indices of singular values in the Julia set of $f_0$ and $\I$ the set of indices $j$ for which $x_j$ is transversal. With $p:=\#\I$ we assume, without loss of generality, that $\I=\{1,\dots,p\}$. We shall use the argument similarly as above. Since $\mathcal{A}_j:=\{a\in B(0,r): x_j(a)=0\}$ is an analytic set of codimension one for $j\in\I$. We see that $\mathcal{A}:=\cap_{j\in\I\setminus\{p\}}\mathcal{A}_j$ is an analytic set of codimension $p-1$. So $x_m$ is not identically equal to zero in $\mathcal{A}$. Take a Whitney disk $D(a_0, r_0)\in B(0, r)\cap\mathcal{A}$. By Lemma \ref{scalelarge} one can find $n_p$ such that diameter of $\xi_{n_p,p}(D(a_0,r_0))=:D_p$ is at least $S$. Lemma \ref{est} tells that $D_p$ contains a disk of radius $S/4$, denoted by $D'_p$.

\bigskip
In the first case we assume that $f_0=f$ has at least one critical value. We may assume that this critical value has index in $\I$. If not, we focus on any one of singular values with indices in $\I$. In case that all these singular values are asymptotic value, we use Wiman-Valiron theory in the same way as in the second case. So, take one of them and denote it by $s_j(0)$ with $j\in\I$. Without of loss generality, we may assume that $j=p \in \I$. So we are considering $s_p(0)$ (see Section 3). We fix the rest as before putting $\mathcal{A}:=\cap_{j\in\I\setminus\{p\}}\mathcal{A}_j$. Since $s_p(a)$ is transversal in $\mathcal{A} \cap B(0,r)$ there is $n_p$ and a subdisk $D(a_0,r_0) \subset B(0,r)$ such that $\xi_{n_p, p}(D(a_0, r_0))$ has reached the large scale and contains a disk of radius $S/4$. Since $\j(f_0)=\c$, we obtain using Theorem \ref{blowup} that there exists $N_p$ such that $f_{0}^{N_p}(\xi_{n_p, p}(D(a_0, r_0)))\supset D(c_p(0), \delta)$ where $c_p(0)$ is a critical point of $f_0$ such that $f_0(c_p(0))=s_p(0)$, and $\delta>0$ is some number. Now by Brouwer's Fixed Point Theorem, one can see that there exists a parameter $a\in D(a_0, r_0)$ such that $f_{a}^{N_{p}}(s_p(a))=c_p(a)$. In other words, $s_p(a)$ falls into a super-attracting cycle of $f_a$. All other singular values $s_k(a)$ of $f_a$ stay in the Julia set of $f_a$ and have the same behaviours to the corresponding $s_k(0)$. So $f_a$ is a Misiurewicz entire function but has non-empty Fatou set.

\medskip
In the second case we assume that $f_0=f$ has at least one asymptotic value in $\I$.
To perturb $f_0$ to a Misiurewicz map in $\m_f$ with non-empty Fatou set, we will use the above Wiman-Valiron theory.

First we recall that $R$ is defined in \eqref{defR} such that the post-singular set of $f$ is contained in $D(0,R)$. Each (unbounded) component of $f^{-1}(\c\setminus\overline{D}(0,R))$ is a logarithmic tract of $f$ over $\infty$. (Note that $f$ in general may have infinitely many logarithmic tracts over $\infty$. Consider for example the function $e^{e^z}$.) For our purpose, it suffices to consider only one of them. Denote the tract we are focusing on by $T_{f}(\infty)$. Then by Theorem \ref{brs}, there is an exceptional set $F$ of radii with finite logarithmic measure such that the Wiman-Valiron theory holds in $T_{f}(\infty)$. For simplicity, we will also use, for $x>0$,
$$E(x)=\exp\sqrt{x}.$$
The $n$-th iterate of $E$ is denoted by $E^{n}$.

By choosing $r_1>0$ large enough and $r_1\not\in F$ and $\beta=2$ in Lemma \ref{brs3} we see that there exists $z_1\in T_{f}(\infty)$ such that with $|z_1|=r_1$ 
\begin{equation}
f\left(D\left(z_1, \frac{\alpha r_1}{a(r_1,v)}\right) \right)\supset A\left(\frac{|f(z_1)|}{2}, 2|f(z_1)| \right).
\end{equation}
Since $r_1$ is large we can have that $[|f(z_1)|, 2|f(z_1)|]\setminus F\neq\emptyset$ and moreover, by Lemma \ref{growintract}, $|f(z_1)|>E(|z_1|)=E(r_1)$. So we can choose a point $z_2\in T_{f}(\infty)$ such that $|z_2|=r_2 \in [|f(z_1)|, 2|f(z_1)|]\setminus F$ such that
\begin{equation}
f\left(D\left(z_2, \frac{\alpha r_2}{a(r_2,v)}\right) \right)\supset A\left(\frac{|f(z_2)|}{2}, 2|f(z_2)| \right)
\end{equation}
and
$$|f(z_2)|>E(r_2)=E(|z_2|)\geq E(|f(z_1)|)\geq E^{2}(|z_1|).$$
Continue inductively we can find a sequence of points $z_j\in T_{f}(\infty)$ for $1\leq j\leq n$ satisfying $|z_j|=r_j\in [|f(z_{j-1})|, 2|f(z_{j-1})|]\setminus F$ such that
\begin{equation}\label{nind}
f\left(D\left(z_{j}, \frac{\alpha r_j}{a(r_j,v)}\right) \right)\supset A\left(\frac{|f(z_j)|}{2}, 2|f(z_j)| \right)
\end{equation}
 and
\begin{equation}\label{increase}
|f(z_j)|>E(|z_j|)\geq E(|f(z_{j-1})|)\geq E^2(|z_{j-1}|)\geq\cdots\geq E^{j}(|z_1|)
\end{equation}
for $1\leq j\leq n-1$. Moreover, by \eqref{brs2} in Theorem \ref{brs} we see that 
\begin{equation}
f'(z)\sim \frac{a(r_j,v)}{z}\left(\frac{z}{z_j}\right)^{a(r_j,v)}f(z_j)~\quad\text{for}\quad~ z\in D\left(z_j, \frac{\alpha r_j}{a(r_j,v)}\right).
\end{equation}
This, combined with \eqref{avr}, gives the estimate
\begin{equation}\label{rouest}
|f'(z)|\leq |f(z_j)|^2 ~\quad\text{for}\quad~ z\in D\left(z_j, \frac{\alpha r_j}{a(r_j,v)}\right).
\end{equation}
We note that the choice of $n$ depends on the discussion below.

\begin{figure}[htbp] 
	\centering
	\includegraphics[width=10.3cm]{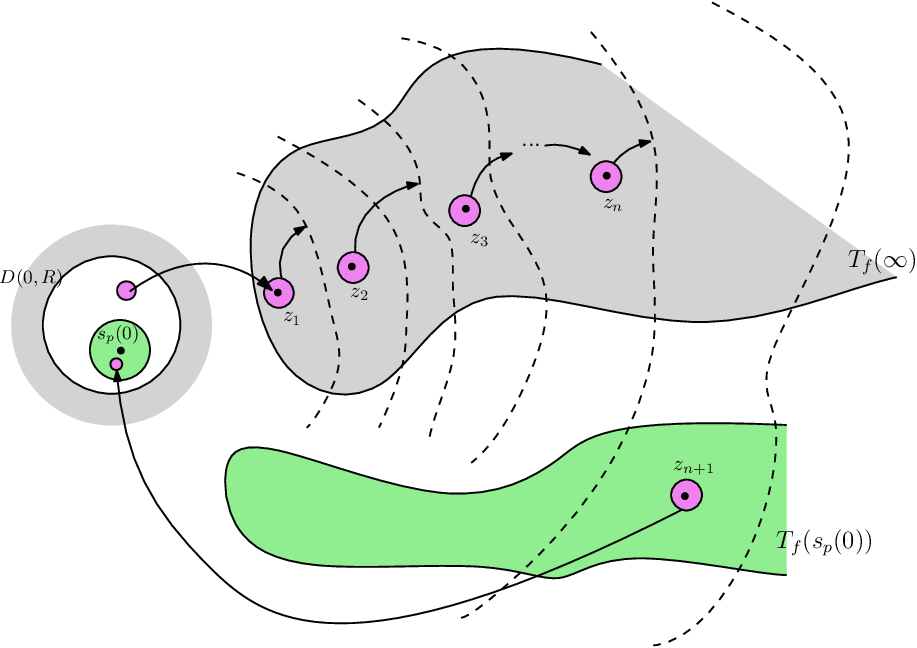}
	\caption{Constructing Misiurewicz maps with non-empty Fatou sets around a Misiurewicz map which has only asymptotic values. The Wiman-Valiron theory is used to create a sufficiently long iterate in logarithmic tracts which "swallows" all previous derivative growth.}
	\label{pic}
\end{figure}

As in the first case, we consider the singular value $s_p(0)$ of $f_0$. Note that by assumption, $s_p(0)$ is an asymptotic value. Let $T_{f}(s_p(0))$ be one of the logarithmic tracts over $s_p(0)$ such that $f(T_{f}(s_p(0)))=D(s_p(0), r')\setminus\{s_p(0)\}$ for some $r'>0$. By \eqref{nind} and by taking a sufficiently large $n$ we see that
\begin{equation}
A\left(\frac{|f(z_n)|}{2}, 2|f(z_n)|\right) \cap T_{f}(s_p(0)) \neq\emptyset.
\end{equation}
Moreover, if $F_j$ is the exceptional set corresponding to the logarithmic tract $T_{f}(s_p(0))$, one can make sure by taking $n$ large that $[|f(z_n)|/2, 2|f(z_n)|]\setminus F_j\neq\emptyset$. Now instead of considering $f$ itself, we consider the function
$$\tilde{f}(z)=\frac{1}{f(z)-s_p(0)}$$
for $z\in T_f(s_p(0))$. For this function (we are not assuming that $\tilde{f}$ is globally defined), $T_f(s_p(0))$ is a logarithmic tract of $\tilde{f}$ over $\infty$. Thus one can apply the Wiman-Valiron theory in $T_f(s_p(0))$ to $\tilde{f}$ and similarly define the function $\tilde{v},\,\tilde{a}(r,\tilde{v})$ corresponding to $v$ and $a(r,v)$ respectively as before.

Therefore, there exists a point $z_{n+1}\in T_{f}(s_p(0))$ such that
\begin{equation}\label{n1ind}
f\left(D\left(z_{n}, \frac{\alpha r_n}{a(r_n,v)}\right) \right)\supset D\left(z_{n+1}, \frac{\tilde{\alpha} r_{n+1}}{\tilde{a}(r_{n+1},\tilde{v})}\right)
\end{equation}
 and
\begin{equation}\
D\left(z_{n+1}, \frac{\tilde{\alpha} r_{n+1}}{\tilde{a}(r_{n+1},\tilde{v})}\right)\subset T_{f}\left(s_p(0)\right).
\end{equation}
Moreover,
$$|\tilde{f}(z_{n+1})|>E(|z_{n+1}|).$$
This gives us, returning to the original function $f$, that
\begin{equation}\label{n1e}
|f(z_{n+1})-s_p(0)|<\frac{1}{E(|z_{n+1}|)}.
\end{equation}

\smallskip
Put $f_0=f$. Now by Lemma \ref{scalelarge}, there exists $n_p$ such that  $D_0:=\xi_{n_p,p}(D(a_0, r_0))$ reaches large scale $S$ as before, where $D(a_0, r_0)$ is any Whitney disk contained in $\mathcal{A} \cap B(0,r)$ and $\mathcal{A} =\cap_{j=1}^{p-1}\mathcal{A}_j$. Our purpose is to find a Misiurewicz map in this disk with non-empty Fatou set. 

Since $\j(f_0)=\c$, Theorem \ref{blowup} implies that there exists $M\in\mathbb{N}$ such that 
$$f_{0}^{M}(D_0)\supset D\left(z_1, \frac{\alpha r_1}{a(r_1,v)}\right).$$
Then by the above construction of $z_j$ we see that there exists a set $D_1\subset D_0$ such that
$$f_{0}^{j-1}(z)\in D\left(z_j, \frac{\alpha r_j}{a(r_j,v)}\right)~\quad\text{for}\quad~1\leq j\leq n+1~\text{~and~}~ z\in f_{0}^{M}(D_1).$$
By considering a longer sequence $(z_j)$ if necessary (i.e., taking a larger $n$), we can achieve that
\begin{equation}\label{max}
|z_{n+1}|\geq\sup_{z\in D_1, 0\leq j\leq M+n} |f_{0}^{j}(z)|.
\end{equation}
Points in $D_1$ have the following behaviours: they first ``escape'' to $\infty$ under long iterates and then come back very close to the singular value $s_j(0)$.

Define
$$D:=\xi_{n_p, p}^{-1}(D_1).$$
This is a set of parameters consisting of maps whose singular value $s_p(a)$ has the above behaviour while all the rest of singular values have same behaviours as to the corresponding singular values for $f_0$. Now we claim that there is a Misiurewicz parameter in $D$ with non-empty Fatou sets.

\smallskip

To see this, with $D_{n+1}:=D\left(z_{n+1}, \frac{\tilde{\alpha} r_{n+1}}{\tilde{a}(r_{n+1},\tilde{v})}\right)$ we consider the pullback of $D_{n+1}$ under the map $f_{a}^{k+n_p+M+n}$, where $a \in D$. (Recall that $k$ is defined in \eqref{kj}.) This is a domain containing the singular value $s_p(a)$. By the Koebe distortion, we see that
$$f_{a}^{-(k+n_p+M+n)}(D_{n+1})\supset D(s_p(a), r_1)$$
with 
$$r_1\geq \frac{c}{|Df_{a}^{k+n_p+M+n}(s_p(a))|}\frac{\tilde{\alpha} r_{n+1}}{\tilde{a}(r_{n+1},\tilde{v})}$$
for  some constant $c>0$. We also have
$$f_a(D_{n+1})\subset D(s_p(a), r_2)$$
with
\begin{equation}\label{r2e}
r_2\leq \frac{c'}{E(|z_{n+1}|)}
\end{equation}
by \eqref{n1e} for some constant $c'>0$.
Now we shall compare $r_1$ and $r_2$. Since, by the chain rule,
$$|Df_{a}^{k+n_p+M+n}(s_p(a))|=|Df_{a}^{n}(f_{a}^{k+n_p+M}(s_p(a))|\cdot|Df_{a}^{k+n_p+M}(s_p(a))|$$
and the latter is small compared with the former by taking $n$ sufficiently large, we have
$$|Df_{a}^{k+n_p+M+n}(s_p(a))|\leq c_1 |Df_{a}^{n}(f_{a}^{k+n_p+M}(s_p(a))|$$
for some constant $c_1>0$. Put $w=f_{a}^{k+n_p+M}(s_p(a))$. Then by the construction,  \eqref{brs2}, \eqref{max} and \eqref{rouest} we see that for some constant $c_2\geq 1$
$$|Df^{n}_{a}(w)|=\prod_{j=0}^{n-1}|f'_{a}(f_{a}^{j}(w))|\leq |f_{a}^{n}(w)|^2\leq c_2 |z_{n+1}|^2.$$
So we see that using \eqref{avr} and \eqref{r2e}
$$r_1\geq \frac{c}{c_1 c_2 |z_{n+1}|^2}\frac{\tilde{\alpha} r_{n+1}}{\tilde{a}(r_{n+1},\tilde{v})}\geq \frac{c_3}{|z_{n+1}|^{1+\delta}}>r_2$$
for some constant $c_3>0$ and $\delta>0$.
In other words, we have that
$$\xi_{a}^{k+n_p+M+n}\left(\overline{D}\left(s_p(a), r_1\right)\right)\subset D(s_p(a), r_2)\subset D(s_p(a), r_1).$$
This means that there exists a parameter in $a\in D$ which has an attracting periodic point such that the singular value $s_p(a)$ lies in some attracting basin. As we leave all other singular values unchanged for $f_a$, we see that $f_a$ is a Misiurewicz map for which $\j(f_a)\neq\c$.
\end{proof}

\begin{proof}[Proof of Theorem \ref{mishyp}]
Let $f\in\s$ be Misiurewicz. We consider two situations. 

\smallskip
$(i)$ If $\j(f)\neq\c$, then Theorem \ref{thm42} gives the desired approximation of $f$ by some hyperbolic map in $\m_f$. As the full dimensional ball $B(0,r)$ can be arbitrarily small, we see that there are always hyperbolic maps arbitrarily close to $f$.

\smallskip
$(ii)$ Suppose now that $\j(f)=\c$. By Theorem \ref{thm43}, one can find another Misiurewicz map $g\in\m_f$ for which $\j(g)\neq\c$. Then  the above case $(i)$ can be applied to this new Misiurewicz map $g$ and thus we can reach the same conclusion.

This completes the proof of Theorem \ref{mishyp}.
\end{proof}

\section{Misiurewicz and hyperbolicity}

In this section we prove the following result, which  contains Theorem \ref{denmis} as a special case.

\begin{theorem}\label{denmis2}
Let $f\in\s$ be Misiurewicz for which $\f(f)\neq\emptyset$. Then the set of hyperbolic maps in $\m_f$ has positive density at $f$. If, in addition, $\j(f)$ has zero Lebesgue measure, $f$ is a Lebesgue density point of hyperbolic maps in $\m_f$.
\end{theorem}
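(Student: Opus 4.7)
The plan is to follow the framework of the proof of Theorem \ref{main}, but with the compact annulus $V'_R$ (which certifies ``not $R$-Misiurewicz'') replaced by a compact set $K$ inside an attracting basin of $f_0$ (which certifies ``singular value in an attracting basin''). By Proposition \ref{fatoucomponent} the hypothesis $\f(f) \neq \emptyset$ forces $\f(f_0) = \f(f)$ to consist of attracting basins, so I fix a closed disk $K$ inside the immediate basin of some attracting cycle of $f_0 = f$. By hyperbolicity of the cycle, a slightly shrunk copy $K'$ of $K$ persists inside the corresponding attracting basin of $f_a$ for all $a$ in a small ball around $0$.

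Given a $k$-Whitney disk $D(a_0, r_0)$ in a one-dimensional slice through $f_0$ and an index $j \in \I$, Lemma \ref{scalelarge} together with Lemma \ref{est} produces an integer $n_j$ such that $\xi_{n_j, j}$ has bounded distortion on $D(a_0, r_0)$ and its image contains $D(\xi_{n_j, j}(a_0), S/2)$. Since $\xi_{n_j, j}(0) \in \Lambda_0 \subset \j(f_0)$, the blow-up Lemma \ref{blowup} yields $N$ with $f_0^N(D(\xi_{n_j, j}(0), S/4)) \supset K$, and by continuity $\xi_{n_j + N, j}(D(a_0, r_0)) \supset K'$ for $r$ small. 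A Koebe/univalent-branch estimate, identical in spirit to the one behind \eqref{depo}, then shows that $\xi_{n_j + N, j}^{-1}(K') \cap D(a_0, r_0)$ has relative measure at least $c = c(f_0, K) > 0$. Parameters in this preimage have $v_j(a)$ in the attracting basin of $f_a$ containing $K'$, which accomplishes one push of a singular value out of the Julia set.

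For the positive density statement I iterate this push in the spirit of the inducing scheme of Theorem \ref{thm42}. Restricting to the just-constructed preimage gives Misiurewicz maps with one fewer singular value in the Julia set; covering this preimage by Whitney disks centered at such maps and repeating the construction (with a 1D slice moving a different singular value at each step) pushes the next index of $\H$ into an attracting basin. After at most $|\H|$ rounds all singular values lie in attracting basins, so $f_a$ is hyperbolic; this yields a subset of $D(0, r)$ of measure at least $c^{|\H|} \meas(D(0, r))$, and Fubini over the remaining chart directions on $\m_f$ promotes this to positive density at $f_0$ in $B(0, r)$.

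Under the additional hypothesis $\meas(\j(f)) = 0$ the push becomes almost full-measure in a single shot: $D(\xi_{n_j, j}(a_0), S/2) \cap \j(f_0)$ has measure zero, so almost every point of this disk sits in some attracting basin of $f_0$, hence also of $f_a$ for small $a$ by persistence of attracting basins. Lemma \ref{est} transfers this to the parameter side to give relative measure at least $1 - \varepsilon(r)$ of $D(a_0, r_0)$ on which $v_j(a)$ lies in an attracting basin of $f_a$, with $\varepsilon(r) \to 0$ as $r \to 0$. Applying this in each chart coordinate direction on $\m_f$ and combining via Fubini lifts the bound to $(1 - \varepsilon(r))^{|\H|} \to 1$ in $B(0, r)$, giving the density-one conclusion. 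The main obstacle is the coordinated push across different singular values $v_j$, all depending on the same parameter vector; the resolution is to invoke the independence of the chart coordinates $(s_1, \dots, s_{q+2})$ on $\m_f$ together with a uniform version of the phase-parameter comparison of Lemma \ref{pp} along each coordinate direction, so that the $|\H|$ conditions can be cleanly combined by Fubini.
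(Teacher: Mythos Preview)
Your strategy is broadly on the right track but more circuitous than the paper's, and the final ``resolution'' you propose does not work as stated.

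The paper avoids both the blowup lemma and the inducing scheme. Once $\xi_{n_j,j}(D(a_0,r_0))$ contains a disk of radius $S/4$, that disk already meets the Fatou set in relative area at least some $c>0$, simply because $\j(f_0)$ is nowhere dense: with $U_\delta$ a $\delta$-neighbourhood of $\j(f_0)$, the compact set $V_\delta=\overline{D}(0,R)\setminus U_\delta\subset\f(f_0)$ meets every disk of radius $\geq S/4$ in definite proportion, and for $r$ small one has $V_\delta\subset\f(f_a)$ for all $a$. No further iterate by $f_0^N$ to reach a prescribed $K$ is needed. All indices $j\in\I$ are then handled \emph{simultaneously on the same Whitney disk}: one forms $\H_{\delta,j}=\xi_{n_j,j}^{-1}\bigl(\xi_{n_j,j}(D(a_0,r_0))\cap V_\delta\bigr)$ and takes the finite intersection $\H_\delta=\bigcap_{j\in\I}\H_{\delta,j}$, whose parameters are hyperbolic. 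In the zero-measure case each $\H_{\delta,j}$ has relative measure tending to $1$ as $\delta\to 0$, so $\H_\delta$ does too by the elementary union bound on the complements; Fubini then lifts this from the slice to the full ball. Your iterative push for the positive-density statement is a legitimate alternative and makes the bound $c^{|\I|}$ explicit, but it is heavier than necessary here.

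The resolution you propose for combining the conditions across $j$ --- invoking independence of the chart coordinates $(s_1,\dots,s_{q+2})$ and a Fubini product $(1-\varepsilon)^{|\H|}$ across different coordinate directions --- is not correct. Moving the $s_j$-coordinate changes the entire map $f_a$ and hence the orbits of \emph{all} singular values; the events ``$v_j(a)\in\f(f_a)$'' for different $j$ are not independent, and Fubini in distinct directions does not decouple them. The correct combination, which the paper uses implicitly, is the trivial intersection bound $\meas(\H_\delta)\geq 1-\sum_{j\in\I}\bigl(1-\meas(\H_{\delta,j})\bigr)$ on a \emph{single} one-dimensional slice; this already gives density one in the zero-measure case without any appeal to coordinate independence.
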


Let $f_0:=f\in\s$ be such a Misiurewicz function. As before we consider a parameter ball $B(0,r)$ centered at $f_0$ of radius $r$, and consider the function $\xi_{n,j}$ defined in $B(0,r)$; cf. \eqref{pp}. Since $\f(f_0)\neq\emptyset$,  we know that $\f(f_0)$ has only attracting cycles by Proposition \ref{fatoucomponent}. Let $U\subset\f(f_0)$ be a neighbourhood of attracting cycles of $f_0$. Let $U_{\delta}$ be a $\delta$-neighbourhood of $\j(f_0)$; that is to say,
$$U_{\delta}=\left\{z:\,\dist(z,\j(f_0))<\delta   \right\}.$$
Then
$$V_{\delta}:=\overline{D}(0, R)\setminus U_{\delta}$$
is a compact subset of $\f(f_0)$. Now if $W$ is a neighbourhood of attracting periodic cycles of $f_0$, there exists $n$ such that 
$$V_{\delta}\subset \left(\bigcup_{j\leq n}f_{0}^{-j}(W)\right)\bigcap \overline{D}(0,R).$$
By choosing $r$ sufficiently small we see that $V_{\delta}\subset\f(f_a)\cap D(0,R)$ for all $a\in B(0,r)$. Since $\j(f_0)$ is nowhere dense, there is a positive number $c>0$ such that for any disk $D\subset D(0,R)$ of radius $\geq S/4$
\begin{equation}\label{vmea}
\frac{\meas (V_{\delta}\cap D)}{\meas D}\geq c>0.
\end{equation}
Here we are not claiming that $c\to 1$ when $\delta\to 0$, since Julia sets of Misiurewicz entire functions can possibly have positive Lebesgue measure (even if the Fatou set is not empty). We refer to the next section for a discussion on this respect.

By Lemma \ref{scalelarge}, for each $j\in\I$ (where $\I$ was defined in \eqref{defx}) there exists $n_j$ such that $\xi_{n_j, j}(D(a_0, r_0))$ has diameter at least $S$. By the distortion in Lemma \ref{est}, $\xi_{n_j,j}(D(a_0, r_0))$ contains a disk of radius at least $S/4$. Put
$$\H_{\delta,j}:= \xi_{n_j, j}^{-1}\left(\xi_{n_j,j}(D(a_0, r_0))\cap V_{\delta}\right).$$
Again by Lemma \ref{est} and \eqref{vmea} we have
\begin{equation}\label{lebdens}
\frac{\meas \H_{\delta,j}}{\meas D(a_0, r_0)}\geq c'>0,
\end{equation}
where $c'$ depends only on $c$.

With
$$\H_{\delta}:=\bigcap_{j\in\I}\H_{\delta,j}$$
we argue similarly as in Section \ref{s33} that each parameter $a\in\H_{\delta}$ is a hyperbolic parameter. We omit details here. So the Lebesgue density of hyperbolic parameters at the Misiurewicz map $f_0$ is strictly positive.

Now if the Julia set of  $f$ has zero Lebesgue measure, one can achieve that $c\to 1$ as $\delta\to 0$ in \eqref{vmea}. So this means that $c'\to 1$ as $r_0\to 0$ in \eqref{lebdens}. Now by the Lebesgue density theorem and Fubini's theorem, hyperbolic maps are Lebesgue dense at Misiurewicz parameters in this case.

\bigskip
\begin{remark}
One might wonder if a similar result as in Theorem \ref{denmis2} holds for Misiurewicz entire functions with empty Fatou sets. This, however, would depend on the geometry of the functions in question. For instance, Dobbs proved that Misiurewicz maps in the exponential family (whose Julia sets are the whole plane) are Lebesgue density points of hyperbolic maps \cite{dobbs1}. In other families of transcendental entire functions, one could expect a different phenomenon  similar as rational maps: they are (Lebesgue) density points of non-hyperbolic maps with certain expansion property (such as ``Collet-Eckmann''); see, for instance, \cite{abc1}.
\end{remark}

\section{Lebesgue measure of Misiurewicz Julia sets}

We prove in this section a result on the Lebesgue measure of Misiurewicz Julia sets. Several examples will also be given showing that such Julia sets can have zero or positive measure when the Fatou sets are not empty.

For rational Misiurewicz functions, the Julia set is either the whole Riemann sphere or of zero Lebesgue measure (actually even Hausdorff dimension less than two). In the transcendental setting, it is possible that $\meas\j(f)>0$ when $\j(f)\neq\c$, as shown by examples later on. On the other hand, there are indeed some conditions ensuring that $\meas\j(f)=0$ for Misiurewicz functions.

Recall that the order of an entire function $f$ is defined as
$$\rho(f)=\limsup_{r\to\infty}\frac{\log\log M(r,f)}{\log r},$$
where $M(r,f)=\max_{|z|=r}|f(z)|$ is the maximum modulus. We say that $f$ has finite order if $\rho(f)<\infty$. We also recall that $a$ is an asymptotic value of an entire function $f$ if there exists a curve $\gamma$ tending to $\infty$ such that $f(\gamma)\to a$. Since we are only considering Speiser functions here, every asymptotic value of a Speiser function is of logarithmic type.

For an entire function $f$ the escaping set $\I(f)$ is the set of points tending to $\infty$ under iterates of $f$. The following result is due to Eremenko and Lyubich \cite{eremenko2}.

\begin{lemma}\label{areaescape}
Let $f\in\b$ be transcendental and entire. Assume that $\rho(f)<\infty$ and the inverse of $f$ has a finite logarithmic singularity. Then $\meas\I(f)=0$. Moreover, there exists $M>0$ such that 
$$\liminf_{n\to\infty}|f^{n}(z)|<M$$
for almost all $z\in\c$.
\end{lemma}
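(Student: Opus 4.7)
The plan is to invoke the logarithmic change of coordinates introduced by Eremenko and Lyubich for the class $\b$. Since $f \in \b$, the singular set $\sing(f^{-1})$ lies in $\overline{D}(0,R)$ for some $R$, and each unbounded component of $f^{-1}(\{|w|>R\})$ is a logarithmic tract $T_j$ over $\infty$. By hypothesis there is, in addition, a logarithmic tract $T_a$ over some finite asymptotic value $a$, on which $f:T_a \to D(a,r)\setminus\{a\}$ is a universal cover for $r$ small enough. Composing with $w \mapsto \log w$ on the $\infty$-tracts and with $w \mapsto -\log(w-a)$ on $T_a$, one lifts the restriction of $f$ to the union $\mathcal{T} = \bigcup_j T_j \cup T_a$ to an analytic map $F$ from a disjoint union of simply connected unbounded domains to a common right half-plane $H = \{\Re \zeta > c\}$.

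The key output of this construction is the Eremenko--Lyubich expansion estimate $|F'(\zeta)| \geq (\Re F(\zeta) - c)/(4\pi)$, obtained by applying Koebe's theorem to the univalent inverse branches of $F$. Consequently, for any sufficiently large $M$, the inverse branches of $F^n$ over any bounded target set contract exponentially, and translating back to the $z$-coordinate gives
\begin{equation*}
\meas \left\{ z \in D(0,R_0) : f^k(z) \in \mathcal{T}\ \text{for all}\ 0 \le k \le n \right\} \leq C(R_0)\, \rho^{-n}
\end{equation*}
for some $\rho > 1$. Passing to the intersection over $n$ shows that the set of points whose orbit is \emph{eventually} trapped in $\mathcal{T}$ has zero planar measure. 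For $M$ large enough, $\{|z|>M\}$ lies inside the projection of $\mathcal{T}$, so every escaping orbit is eventually trapped in $\mathcal{T}$, yielding $\meas \I(f) = 0$. Likewise the set $E_M = \{z : \liminf_n |f^n(z)| \geq M\}$ is contained in the eventually-trapped set and therefore also has zero measure, which is precisely the ``moreover'' statement.

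The main obstacle is the pullback estimate: when iterating the contraction in the disjoint half-plane picture one must ensure that Koebe distortion does not accumulate and destroy the geometric decay of measure. This is handled by the uniform expansion above together with the simple connectivity of each tract, so that the $1/4$-theorem applies at each pullback step with uniformly bounded distortion. The finite-order assumption $\rho(f)<\infty$ enters through the Denjoy--Carleman--Ahlfors theorem, which bounds the number of logarithmic tracts over $\infty$; this keeps the constants in the decay estimate uniform and allows $\rho > 1$ to be chosen independently of the particular orbit followed through the tract system.
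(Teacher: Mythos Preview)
The paper does not prove this lemma; it is stated as a result of Eremenko and Lyubich and cited to \cite{eremenko2} with no argument supplied, so there is no in-paper proof to compare against. Your sketch, however, has a genuine gap.

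The logarithmic change of variable and the expansion inequality $|F'(\zeta)|\ge(\re F(\zeta)-c)/(4\pi)$ are the correct opening moves, but the displayed geometric decay does not follow from them. You justify it by contraction of inverse branches of $F^n$ over bounded targets, yet an escaping orbit has no bounded target: its image under $F^n$ runs off to $\re\zeta=+\infty$. Expansion by itself cannot force the decay; indeed the conclusion $\meas\I(f)=0$ is known to fail for finite-order maps in $\b$ that lack a finite asymptotic value (compare the paper's Example~6.3 and \cite{aspenberg1}), so that hypothesis is doing essential work your argument does not actually exploit. What is missing is a density statement: the complement of the $\infty$-tracts inside $\{|z|>R\}$, which contains the tract $T_a$ over the finite value $a$, must be shown to occupy a uniformly positive proportion of every large annulus (equivalently, of every large square in the $\zeta$-plane). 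Only with that in hand do expansion and Koebe distortion give that a fixed fraction of mass leaves the $\infty$-tracts at each step, and the geometric decay follows. In your outline $T_a$ is merely adjoined to $\mathcal T$ and lifted alongside the other tracts, so it plays no distinguished role; it should instead serve as the ``hole'' into which orbits leak. Establishing its positive lower density is where the finite-order assumption really enters---through growth estimates on the conformal map of the tract---not solely through the Denjoy--Carleman--Ahlfors count of tracts as your last paragraph suggests.
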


\begin{theorem}\label{area}
Let $f\in\s$ be Misiurewicz of finite order and $\j(f)\neq\c$. Assume that $f$ has a finite asymptotic value. Then $\area\j(f)=0$.
\end{theorem}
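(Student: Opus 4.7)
My plan is to argue by contradiction: assuming $\meas\j(f)>0$, combine Lemma \ref{areaescape} with a Koebe telescope pullback and the Lebesgue density theorem to derive a contradiction. Since $f\in\s$ has a finite asymptotic value, every asymptotic tract of $f$ is logarithmic, so Lemma \ref{areaescape} applies and yields $\meas\I(f)=0$ together with an $M>0$ such that a.e.\ $z\in\c$ satisfies $\liminf_{n\to\infty}|f^{n}(z)|<M$. The hyperbolic set $\p_H(f)$ has zero area by Lemma \ref{hs}, and hence so does $\bigcup_n f^{-n}(\p_H(f))$. After removing these null sets, the remainder of $\j(f)$ still has positive measure, and a.e.\ point of it is a Lebesgue density point of $\j(f)$. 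Fix such a $z_{0}$, and extract a subsequence $n_k\to\infty$ with $w_k:=f^{n_k}(z_0)\to w_\infty\in\overline{D}(0,M)\cap\j(f)$; by construction $w_\infty\notin\p_H(f)\cup\sing(f^{-1})$, so I may fix a small disk $U=D(w_\infty,\rho)$ disjoint from both.

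The central step is to pull $U$ back along the orbit of $z_0$ to produce disks $V_k\ni z_0$ with $\diam V_k\to 0$ and $f^{n_k}\colon V_k\to U$ of uniformly bounded distortion. I would telescope this pullback over three regimes: iterates $f^j(z_0)$ staying far from $\sing(f^{-1})$ contribute only bounded Koebe distortion; iterates entering a small neighbourhood of the hyperbolic set $\p_H(f)$ contribute exponentially shrinking pullbacks, and therefore only bounded aggregate distortion, via the uniform expansion on $\p_H(f)$ supplied by the Misiurewicz condition; and approaches to the finitely many isolated points of $\sing(f^{-1})\setminus\p_H(f)$, which include the critical points in $\jrit(f)$, are handled by a Ma\~n\'e-style one-step jump, using that the image of each such point lies in $\p_H(f)$.

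To conclude, since $\j(f)\neq\c$ the Fatou set is non-empty, so Lemma \ref{blowup} furnishes some $m$ for which $f^m(U)$ contains a fixed compact set $K$ meeting $\f(f)$ in positive measure; pulling $K\cap\f(f)$ back to $U$ via an inverse branch of $f^m$ disjoint from $\sing(f^{-1})$ and using complete invariance of $\f(f)$ yields a fixed $\sigma>0$ with $\meas(\f(f)\cap U)\geq\sigma\meas U$. Bounded distortion then transfers this to $\meas(\f(f)\cap V_k)\geq(\sigma/C)\meas V_k$, contradicting that $z_0$ is a Lebesgue density point of $\j(f)$. The hard part will be the telescope itself: controlling close approaches of the orbit of $z_0$ to $\sing(f^{-1})$, which is exactly where the hyperbolicity of $\p_H(f)$ and the finiteness of $\sing(f^{-1})$ (Speiser) together become essential.
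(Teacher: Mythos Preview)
Your overall contradiction strategy --- Lebesgue density plus Lemma~\ref{areaescape} plus a Koebe pullback --- is exactly the paper's, but you overcomplicate the pullback and your telescope sketch contains a genuine slip. The paper avoids any telescope: since $\p(f)\cap\j(f)$ lies in the hyperbolic set $\p_H(f)$ together with finitely many extra points, the whole of $\p(f)$ has measure zero, so one chooses $z$ whose entire forward orbit avoids $\p(f)$ (not merely $\p_H(f)$) and extracts a bounded subsequence $z_{m_j}=f^{m_j}(z)$ with $\dist(z_{m_j},\p(f))>2\delta$ for a fixed $\delta$. The inverse branches $f^{-m_j}$ are then defined and univalent on the full disk $D(z_{m_j},\delta)$, and a single application of Koebe gives bounded distortion --- no three-regime decomposition, no Ma\~n\'e jump. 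Your blowup-lemma step is likewise an unnecessary detour: since $\j(f)\neq\c$ is nowhere dense, any disk around a Julia point already meets $\f(f)$ in positive measure, and the paper just takes a uniform lower bound on $\meas(\f(f)\cap D(\xi,\delta))/\meas D(\xi,\delta)$ over $|\xi|<\widetilde{M}$ by compactness.

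The slip: you write that the isolated points of $\sing(f^{-1})\setminus\p_H(f)$ ``include the critical points in $\jrit(f)$''. They do not --- $\sing(f^{-1})$ consists of singular \emph{values}, whereas $\jrit(f)$ consists of critical \emph{points}. For the pullback, the obstruction is that a pullback disk meets a singular \emph{value}; a Ma\~n\'e one-step jump is a device for passing a critical \emph{point}. Your regime~(3) as written therefore does not address the right object, and it is not clear your telescope goes through without repairing this. That difficulty is precisely what the paper sidesteps by staying a fixed distance from all of $\p(f)$ from the outset.
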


\begin{remark}
Let $f$ be as in the above theorem. It follows from a result of Bara\'nski \cite{baranski1} and Schubert \cite{schubert} that  the Hausdorff dimension of $\j(f)$ is two.
\end{remark}

\begin{proof}
As a hyperbolic set always has zero Lebesgue measure, we see that the post-singular set of $f$ has measure zero. By Lemma \ref{areaescape}, under the assumption of our theorem there exists $M>0$ such that
\begin{equation}\label{bmf}
\liminf_{n\to\infty}|f^{n}(z)|<M
\end{equation}
for almost every $z\in\c$. Take a point $z\in\j(f)$ satisfying \eqref{bmf} for which $f^{n}(z)$ does not belong to the post-singular set of $f$ for all $n$. Then there exist a number $\delta>0$, a sequence $m_j$ and a point $w\in\j(f)$ such that $z_{m_j}:=f^{m_j}(z)\to w$ and $\dist(z_{m_j}, \p(f))>2\delta.$ 

Define $\widetilde{M}:=\max\{M, 2|w|\}$. Since $\j(f)\neq\c$, it is a nowhere dense set and thus by compactness
\begin{equation}\label{skd}
\inf_{\xi:\, |\xi|<\widetilde{M}}\frac{\meas D(\xi,\delta)\cap\f(f)}{\meas D(\xi,\delta)}\geq \varepsilon>0.
\end{equation}
By the choice of $z$ we see that any inverse branch of $f^{m_j}$ can be defined. Using the Koebe distortion theorem we obtain by applying \eqref{skd} to $\xi=z_{m_j}$ that
\begin{equation}\label{skd2}
\frac{\meas f^{-m_j}(D(z_{m_j},\delta))\cap\f(f)}{\meas f^{-m_j}(D(z_{m_j},\delta))}\geq C\varepsilon>0,
\end{equation}
where $C>0$ is some constant independent of $z$ and $\delta$. Koebe distortion also implies that there exist $r_1, r_2>0$ satisfying $r_1\sim r_2$ such that
$$D(z,r_1)\subset f^{-m_j}(D(z_{m_j},\delta)) \subset D(z,r_2).$$
Since $w\in\j(f) \setminus \p(f)$, we have that the derivative of the inverse branch of $f^{m_j}$ tends to $0$ uniformly in $D(w,C'\delta)$ for some constant $C'<1$.

This implies immediately that $z$ is not a Lebesgue density point of $\j(f)$. As $z$ can be chosen arbitrarily (except for a zero measure set), we have the conclusion  by the Lebesgue density theorem.
\end{proof}

Conditions in the above theorem are stated mainly for simplicity. Eremenko and Lyubich gave more general conditions ensuring that the escaping set of an entire function in the class $\b$ has zero Lebesgue measure. See \cite[Theorem 7]{eremenko2} and also a generalization in \cite{cuiwei1}. It is plausible that under these conditions Misiurewicz entire functions also have zero Lebesgue measure Julia sets. On the other hand, conditions implying Julia sets of positive Lebesgue measure also exist; see, for instance, \cite{aspenberg1}.

The rest of this section is devoted to providing several examples of Misiurewicz entire functions with non-empty Fatou set but with positive or zero Lebesgue measure Julia sets.

\begin{example}
$f_{\lambda}(z)=\lambda z^2 e^{z}$, where $\lambda\approx 1.0288$. 
\end{example}

It is easy to see that $f_{\lambda}\in\s$ and $\sing(f_{\lambda}^{-1})=\{0, 4\lambda/e^2\}$. More precisely, $0$ is a critical value and also an asymptotic value at the same time, and $4\lambda/e^2$ is a critical value with the corresponding critical point at $-2$. Moreover, $0$ is a super-attracting fixed point of $f_{\lambda}$ which means that $\j(f_{\lambda})\neq\c$. That $\lambda\approx 1.0288$ is obtained by solving the equation $f_{\lambda}(4\lambda/e^2)=4\lambda/e^2$. With this  chosen $\lambda$ the critical value $4\lambda/e^2$ is a repelling fixed point. So we see that the map $f_{\lambda}$ is a Misiurewicz-Thurston map. In addition, the order of $f_{\lambda}$ is one. So this function indeed satisfies all assumptions in Theorem \ref{area}. Therefore, $\meas\j(f_{\lambda})=0$.

To obtain more examples, one can consider the family 
$$f_{\lambda, m}(z)=\lambda z^m e^z,$$
where $m\geq 2$ and $\lambda\in\c\setminus\{0\}$. Since functions in this family always have a super-attracting fixed point at the origin, the Fatou sets are always non-empty. For each $m$, by choosing appropriate parameters $\lambda$ in a similar way as above, one can obtain Misiurewicz-Thurston maps which satisfy conditions in Theorem \ref{area} and thus have Misiurewicz Julia sets of zero Lebesgue measure.

\smallskip
Perhaps a similar but simpler example is the following.
\begin{example}
$f(z)= z e^{z+1}$. 
\end{example}

One can easily check that $f\in\s$ has two singular values at $0$ and $-1$. Moreover, $0$ is a repelling fixed point of $f$ and $-1$ is a super-attracting fixed point. So $\f(f)\neq\emptyset$ and $f$ is Misiurewicz-Thurston. It also satisfies Theorem \ref{area} and thus has zero Lebesgue measure Julia set.

\smallskip
\begin{example}
Let $g_{\lambda}(z)=\lambda\sin(z^2)$, where $\lambda\approx 3.85299i$. 
\end{example}

By computation, $g_{\lambda}\in \s$ with $\sing(g_{\lambda}^{-1})=\{0,\,\pm\lambda\}$. All singular values of $g_{\lambda}$ are critical values. $0$ is also a critical point. So for any parameter $\lambda$, the function $g_{\lambda}$ has a super-attracting cycle and thus $\j(f)\neq\c$. $\lambda$ is taken as one of the solutions of the equation $z^2\sin(z^2)^2=z^2+2\pi$. Then we see that $g_{\lambda}^2(\lambda)=g_{\lambda}(\lambda)$ but $g_{\lambda}(\lambda)\neq\lambda$. This means that the critical values $\pm\lambda$ are strictly pre-periodic. So $g_{\lambda}$ is Misiurewicz-Thurston. A result of Bock tells that the Julia set of $g_{\lambda}$ has positive measure \cite{bock1}; see also \cite{aspenberg1}. Therefore, the Julia set of $g_{\lambda}$ is not the whole plane but has  positive Lebesgue measure.

\section*{Acknowledgement}
We thank Walter Bergweiler for communicating results on Wiman-Valiron theory. We also thank the referee for a careful reading and valuable suggestions. The second author gratefully acknowledges support from Vergstiftelsen and Qilu Young Scholar Program of Shandong University.


\newcommand{\etalchar}[1]{$^{#1}$}

\bigskip

\noindent {\bf Magnus Aspenberg}\\
Centre for Mathematical Sciences\\
Lund University, Box 118, 22 100 Lund, Sweden
 
\smallskip
\noindent{magnus.aspenberg@math.lth.se}

\bigskip

\noindent {\bf Weiwei Cui}\\
 Research Center for Mathematics and Interdisciplinary Sciences\\
Frontiers Science Center for Nonlinear Expectations, Ministry of Education\\
Shandong University, Qingdao, 266237, China.

\smallskip

\noindent{weiwei.cui@sdu.edu.cn}

\end{document}